\newlength{\faktor@zaehlerhoehe}
\newlength{\faktor@nennerhoehe}
\DeclareRobustCommand*{\faktor}[3][]
{
   { \mathpalette{\faktor@impl@}{{#1}{#2}{#3}} }
}
\newcommand*{\faktor@impl@}[2]{\faktor@impl#1#2}
\newcommand*{\faktor@impl}[4]{
   \settoheight{\faktor@zaehlerhoehe}{\ensuremath{#1#2{#3}}}   \settoheight{\faktor@nennerhoehe}{\ensuremath{#1#2{#4}}}      \raisebox{0.5\faktor@zaehlerhoehe}{\ensuremath{#1#2{#3}}}      \mkern-5mu\diagup\mkern-4mu      \raisebox{-0.5\faktor@nennerhoehe}{\ensuremath{#1#2{#4}}}}
\makeatletter\AtBeginDocument{\hypersetup{pdftitle = {\@title}, pdfauthor = {\@author} }}\makeatother
\tikzset{commutative diagrams/arrow style=math font}
\title{Equivariant Differential Cohomology}
\author{Andreas Kübel}
\address{A.K., Max Planck Institute for Mathematics in the Sciences, Inselstraße 22, 04103 Leipzig, Germany}
\email{kuebel@mis.mpg.de}
\author{Andreas Thom}
\address{A.T., Institut f\"ur Geometrie, TU Dresden, 01062 Dresden, Germany}
\email{andreas.thom@tu-dresden.de}
\date{November 10, 2015}
\newcommand{\emphind}[1]{\emph{#1}}
\newcommand{\symindex}[2][a]{}
\theoremstyle{plain}\makeatletter
\g@addto@macro{\thm@space@setup}{\thm@headpunct{}}
\newtheorem{theorem}{Theorem}[section]
\newtheorem{lemma}[theorem]{Lemma}
\newtheorem{prop}[theorem]{Proposition}
\newtheorem{defn2}[theorem]{Definition + Proposition}
\theoremstyle{definition}
\newtheorem{example}[theorem]{Example}
\newtheorem{remark}[theorem]{Remark}
\newtheorem{defn}[theorem]{Definition}
\DeclareMathOperator{\im}{im}
\DeclareMathOperator{\id}{id}
\DeclareMathOperator{\tr}{tr}
\DeclareMathOperator{\Cone}{Cone}
\DeclareMathOperator{\map}{map}
\DeclareMathOperator{\End}{End}
\DeclareMathOperator{\Hom}{Hom}
\DeclareMathOperator{\Ad}{Ad}
\DeclareMathOperator{\pr}{pr}
\DeclareMathOperator{\sgn}{sgn}
\newcommand{\Cech}{\v{C}ech{}}
\newcommand{\from}{\colon}
\newcommand{\dual}{^\vee}
\newcommand{\defeq}{\mathrel{\mathop:}=}
\newcommand{\del}{\partial}
\newcommand{\Natural}{\mathbb{N}}
\newcommand{\Real}{\mathbb{R}}
\newcommand{\Integer}{\mathbb{Z}}
\newcommand{\Z}{\mathbb{Z}}
\newcommand{\Zp}{{C_p}}
\newcommand{\Complex}{\mathbb{C}}
\newcommand{\C}{\mathbb{C}}
\newcommand{\D}{\mathcal{D}}
\newcommand{\DG}{\mathcal{D}_{\text{Gomi}}}
\newcommand{\g}{\mathfrak{g}}
\newcommand{\frakd}{\mathfrak{d}}
\newcommand{\F}{\mathcal{F}}
\newcommand{\invers}{^{-1}}
\newcommand{\fatH}{\widehat{\mathbb{H}}}
\newcommand{\sing}{\mathrm{sing}}
\newcommand{\cell}{\mathrm{cell}}
\newcommand{\cl}{\mathrm{cl}}
\newcommand{\ddt}{\left.\frac{d}{dt}\right|_{t=0}}
\newenvironment{scaledcd}[1]{\begin{tikzpicture}[baseline= (a).base] \node[scale=#1] (a) at (0,0)\bgroup\begin{tikzcd}[column sep=small]}{\end{tikzcd}\egroup;\end{tikzpicture}\ignorespacesafterend}
\newcommand\pgfmathsinandcos[3]{  \pgfmathsetmacro#1{sin(#3)}  \pgfmathsetmacro#2{cos(#3)}}
\newcommand\LongitudePlane[3][current plane]{  \pgfmathsinandcos\sinEl\cosEl{#2}   \pgfmathsinandcos\sint\cost{#3}   \tikzset{#1/.estyle={cm={\cost,\sint*\sinEl,0,\cosEl,(0,0)}}}
}
\newcommand\LatitudePlane[3][current plane]{  \pgfmathsinandcos\sinEl\cosEl{#2}   \pgfmathsinandcos\sint\cost{#3}   \pgfmathsetmacro\yshift{\cosEl*\sint}
  \tikzset{#1/.estyle={cm={\cost,0,0,\cost*\sinEl,(0,\yshift)}}} }
\newcommand\DrawLongitudeCircle[2][1]{
  \LongitudePlane{\angEl}{#2}
  \tikzset{current plane/.prefix style={scale=#1}}
     \pgfmathsetmacro\angVis{atan(sin(#2)*cos(\angEl)/sin(\angEl))}   \draw[current plane] (\angVis:1) arc (\angVis:\angVis+180:1);
  \draw[current plane,dashed] (\angVis-180:1) arc (\angVis-180:\angVis:1);
}
\newcommand\DrawLatitudeCircle[2][1]{
  \LatitudePlane{\angEl}{#2}
  \tikzset{current plane/.prefix style={scale=#1}}
  \pgfmathsetmacro\sinVis{sin(#2)/cos(#2)*sin(\angEl)/cos(\angEl)}
    \pgfmathsetmacro\angVis{asin(min(1,max(\sinVis,-1)))}
  \draw[current plane] (\angVis:1) arc (\angVis:-\angVis-180:1);
  \draw[current plane,dashed] (180-\angVis:1) arc (180-\angVis:\angVis:1);
}
\tikzset{  >=latex,   inner sep=0pt,  outer sep=2pt,  mark coordinate/.style={inner sep=0pt,outer sep=0pt,minimum size=3pt,
    fill=black,circle}}
 \newcounter{mybibcounter}
\begin{document}
\begin{abstract}
    The construction of characteristic classes via the curvature form of a connection is one motivation for the refinement of integral cohomology by de Rham cocycles -- known as differential cohomology. We will discuss the analog in the case of a group action on the manifold: The definition of equivariant characteristic forms in the Cartan model due to Nicole Berline and Mich{\`e}le Vergne motivates a refinement of equivariant integral cohomology by all Cartan cocycles. In view of this, we will also review previous definitions critically, in particular the one given in work of Kiyonori Gomi.
\end{abstract}
\maketitle

\tableofcontents

\newpage

\section{Introduction}
\enlargethispage{\baselineskip}
The interplay between geometry and topology is a widely occurring theme in modern mathematics, whose most elementary appearance is the formula of \emph{Hopf's Umlaufsatz}: Let $c\from [0,a]\to \Real^2$ be a closed smooth curve in the plane. Then the winding number of the curve is given by the integral over the curvature:
\[n_c=\frac1{2\pi}\int_0^a \kappa(t)\|c'(t)\| dt.\]
This result is surprising: The quantity on the left-hand side is an integer and purely topological -- vividly speaking this means: it does not depend on small alterations of the curve. Whereas, on the right-hand side, one integrates a real-valued function, which does depend on the geometry -- how long the curve is and how strongly it is curved.

A first generalization of the Umlaufsatz is known as the Gauss-Bonnet theorem, which states that for any compact surface $M$ of genus $g$ in $\Real^3$: 
\[2(g-1)=\frac1{2\pi}\int_M \kappa,\]
where now $\kappa$ denotes the Gaussian curvature of the surface. 

The generalizations of these statements by characteristic classes are based on \emph{de Rham cohomology}: The differential forms on a smooth manifold form a chain complex, which depends on the geometry of the space, but the cohomology of this chain complex is isomorphic to any real cohomology theory, e.g., to singular cohomology with real coefficients. This means that any real cohomology class -- a topological object -- can be represented by a closed differential form, a geometric object. 

In these terms, the left-hand side of the equations above will be generalized by the image of an \emph{integral} cohomology class in real cohomology; the curvature on the right-hand side will be replaced by a closed differential form (depending on the curvature) and the integral will be expressed by taking the cohomology class of this form.
\\[2ex]
In general, characteristic classes associate cohomology classes to (isomorphism classes of) vector bundles. For smooth bundles, there are two well-known procedures to construct them, one which applies the geometric structure and one which uses topology only:

The Chern-Weil-Homomorphism starts with a connection on the bundle and evaluates an invariant symmetric polynomial on the associated curvature form, which leads to a closed differential form, the characteristic form. As the difference of the characteristic forms of two connections is an exact form -- the exterior derivative of the transgression form -- one gets a class in de Rham cohomology which is independent of the chosen connection and called the characteristic class of the bundle.

On the other hand one may also obtain these classes by pulling back universal characteristic classes via the classifying map of the bundle. 

Both construction have their own strengths: The characteristic form contains geometric data, while the class is purely topological. The class itself actually is not an element in real, but in integral cohomology, where algebraic torsion may deliver finer information, which cannot be reflected by the characteristic form, as there is no algebraic torsion over the field of real or complex numbers.

To use both, the geometric information of the characteristic form and its transgression and the algebraic torsion information from integral cohomology in one object, one defines differential cohomology and differentially refined characteristic classes. This was done first by Jeff Cheeger and James Simons in \cite{CS}. The differential cohomology theory extends integral cohomology by closed differential forms. A notable result is that -- while the classical first Chern class classifies complex line bundles up to isomorphism -- the first differential Chern class classifies complex line bundles with connection up to isomorphism.

From this starting point there are various ideas of differential refinements of cohomology theories: Besides the differential characters of Cheeger and Simons, there is an isomorphic model by smooth Deligne cohomology (see \cite{Brylinski,Bunke}). On the other hand there are various models for differential K-theory (see \cite{BSsurvey} for a survey, which includes a discussion of the literature). A general framework for these differential refinements is given in  \cite{BunkeSchick} and \cite{BNV}.

We want to go back to the starting point and generalize the idea of the differential refinement to an equivariant setting, i.e., we have a Lie group $G$ acting on a smooth manifold $M$ and ask for a theory which enables differential refinements of equivariant characteristic classes of $G$-equivariant vector bundles over $M$. 

To do so, we need a differential form model for equivariant cohomology, which is capable to receive a homomorphism from integral cohomology. Moreover, there should be two constructions of real/complex equivariant characteristic classes, one via equivariant characteristic forms and one via integral equivariant characteristic classes, which should coincide under the homomorphism between the cohomology theories.

The construction of the differential refinement, which we will give, is an equivariant version of smooth Deligne cohomology, but to stress that it fits into the picture of differential refinements, we will use the term equivariant differential cohomology, even if we will not discuss equivariant differential refinements in general.

\subsection{Equivariant cohomology and simplicial manifolds}
Defining equivariant cohomology $H_G^*(M)$ is a simple business using two expected properties of this functor: homotopy invariance and that, for free actions, the equivariant cohomology should coincide with the cohomology of the quotient. Namely, let $EG$ be a contractible space with a free $G$-action, then the diagonal action of $G$ on $EG\times M$ is free and the map $EG\times M\to \{*\}\times M$ is a homotopy equivalence. Hence, we have described the well-known Borel construction, which is in formulas \[H_G^*(M)= H_G^*(\{*\}\times M)=H_G^*(EG\times M)=H^*(EG\times_G M),\]
for any cohomology theory and any coefficient group, e.g. singular cohomology with values in $\Z,\Real$ or $\C$. Here $EG\times_G M$ is the quotient of the diagonal $G$-action on $EG\times M$.

As short and easy this construction is, it creates a task for us: $EG$ is even in simple cases not a finite-dimensional manifold, hence we have no de Rham cohomology. But $EG$ is something similar to a manifold: Namely there is a simplicial manifold (\cite{Deligne,Dupont}), i.e., a simplicial set such that the set of $p$-simplices forms a smooth manifold for each $p$ and all face and degeneracy maps are smooth, and the  geometric realization of this simplicial manifold is $EG\times_G M$. This will be introduced in Section \ref{sec:simpman} and we will explain how one defines (simplicial) differential forms on a simplicial manifold. They lead to a complex, which is bi-graded: by the form degree and the simplicial degree. The cohomology of this double complex calculates equivariant complex cohomology. In fact, simplicial differential forms also form a (graded) simplicial sheaf $\Omega_{\C}^{\bullet,*}$. 

Using the language of simplicial sheaf cohomology, the de Rham homomorphism is induced by the inclusion of the locally constant simplicial sheaf $\underline\Z\to\Omega_{\C}^{\bullet,*}$, as locally constant functions.

In Section \ref{sec:Cartan}, we will introduce the reader to a more famous model of equivariant cohomology using differential forms, known as the Cartan model. This is given by the so-called equivariant differential forms, i.e., equivariant polynomial maps $\mathfrak g\to \Omega^*(M)$, where the differential $d_C$ on $(\C[\mathfrak g]\otimes \Omega^*(M))^G$ is given by
\[(d_C \omega)(X)=d(\omega(X))+\iota_{X^\sharp}(\omega(X)),\]
i.e. the sum of the exterior differential and the contraction with the fundamental vector field of $X$, and hence increases the grading given through
\[\text{twice the polynomial degree } + \text{ the differential form degree}\]
by one.

The Cartan model has the advantage that its cochain complex is substantially slimmer than the double complex $\Omega^{\bullet,*}$ defined above, but it is not directly capable to receive a homomorphism from integral cohomology. Therefore we apply ideas of \cite{Getzler} to compare the different models of equivariant cohomology. This comparison will enable our construction of a differential refinement of equivariant integral cohomology.

\subsection{Equivariant characteristic classes and forms}
Let $G$ act on the vector bundle $E\to M$, i.e., we have an action on the total space and the base space, such that the projection is equivariant. Via the Borel construction, one can define equivariant characteristic classes easily: Take the usual characteristic classes of $EG\times_G E\to EG\times_G M$! 

There is also a characteristic form construction (see \cite{BerlineVergne}) which does not only depend on the curvature, but also uses the moment map $\mu^\nabla$ of the connection $\nabla$. This is a map from the Lie algebra of the acting group to the endomorphisms of the vector bundle (see Definition \ref{def:momentmap} for details). In this way, one obtains an equivariant characteristic form, which is a closed equivariant differential form, i.e., an element in the Cartan complex.

Both paths lead to tho same class in equivariant complex Borel cohomology. We discuss this in \cite{Ich1}, since for this compatibility, although generally assumed to hold, there exist only a proof for special cases (compare \cite{BottTu01}) in the literature. 

\subsection{Equivariant differential cohomology}
After we have achieved this understanding of equivariant characteristic forms, we can review previous definitions critically to obtain a more satisfactory one.

There is a definition of equivariant smooth Deligne cohomology $\hat H_G^*(M,\Z)$ in \cite{Gomi}, and Kiyonori Gomi shows there that $\hat H_G^2(M,\Z)$ classifies $G$-equivariant line bundles with connection. We will show that his definition fits, for actions of compact groups, into a differential cohomology hexagon (\autoref{thm:GomiHexagon1}) and thus can be interpreted as a model for equivariant differential cohomology. But this definition neglects the secondary information of the moment map and is, thus, only satisfactory in the case of finite groups, where there is no moment and in low degrees, where the moment map does not play a role. There are also other, less elaborated, definitions (see Remark \ref{rm:OtherDefs}), which are all unsatisfactory from our insight to characteristic forms. 

Therefore, in Section \ref{sec:newdef}, we define (full) equivariant differential cohomology $\fatH_G^*(M,\Z)$ (using a mapping cone construction similar to the non-equivariant case in \cite{Bunke}) and show (see Theorem \ref{thm:FullHexagon}) that for any compact Lie group $G$, one has the commutative diagram
\[
\begin{scaledcd}{.9}
\phantom{.} &\hspace{-2em}\faktor{\Omega^{n-1}_G(M)}{(d+\iota)\Omega^{n-2}_G(M)} \arrow{dr}{a} \arrow{rr}{d+\iota}& & \Omega^{n}_G(M)_\cl\arrow{dr}{}&\\
H_G^{n-1}(M,\C)\arrow{ru}{}\arrow{dr}{}&&\hspace{-1em}\fatH_G^n(M,\Z) \arrow{ru}{R}\arrow[two heads]{dr}{I}&&H_G^n(M,\C)\\
&H_G^{n-1}(M,\C/\Z) \arrow[hook]{ur}{} \arrow{rr}{-\beta}&&H_G^n(M,\Z) \arrow{ur}{}&
\end{scaledcd}
\]
where the line along the top, the one along the bottom and the diagonals are exact.

In the case of the trivial group one obtains the classical differential cohomology. In degree up to two, our definition coincides with the one of Gomi. In higher degrees one has additional geometric data, e.g., in the case of the conjugation action of $S^3=SU(2)$ on itself, as discussed in Section \ref{sec:S3}, one has $\hat H_{S^3}^4(S^3,\Z)= H_{S^3}^3(S^3,\C/\Z)\oplus H_{S^3}^4(S^3,\Z)=\C/\Z\oplus \Z$, while we have a short exact sequence \[0\to \faktor{\Omega^1(S^3)^{S^1}}{dC^\infty(S^3)^{S^1}}\to \fatH_{S^3}^4(S^3,\Z)\to\hat H_{S^3}^4(S^3,\Z)\to 0,\] hence we have additional transgression data.

From the hexagon, one concludes that equivariant differential cohomology is the right group to define equivariant differential characteristic classes in, since they can refine both, the equivariant integral characteristic class and the equivariant characteristic form. The details of this constructions are worked out in Section \ref{sec:eqdiffchar}.

\section{Models for equivariant cohomology}
Let $M$ be a smooth manifold acted on from the left by a Lie group $G$. To define equivariant cohomology one uses two properties which one expects from such a theory: it should be homotopy invariant and for free actions, the equivariant cohomology should be the cohomology of the quotient. Recall that the total space of the classifying bundle $EG$ is a contractible topological space with free $G$-action. Hence $EG\times M$ has the homotopy type of $M$ and the diagonal action is free. Hence one defines 
\[H^*_G(M)\defeq H^*(EG\times_G M),\] 
where $EG\times_G M$ \symindex[e]{EG\times_G M} is the quotient of $EG\times M$ by the diagonal action.
We are interested in differential form models for equivariant cohomology, but in general $EG$ is not a finite-dimensional manifold, hence we cannot use the usual de Rham cohomology. But there is a model for $EG$, which consist of finite dimensional manifold:

\subsection{Simplicial manifolds and differential forms}\label{sec:simpman}
The model of $EG\times_G M$ we are going to use is a given by a simplicial manifold.
\begin{defn}[see, e.g., {\cite[p.89]{Dupont}}]
    A \emph{simplicial manifold}\index{simplicial manifold} is contra-variant functor from the simplex category $\Delta$ to the category of smooth manifolds.
\end{defn}
Explicitly this is an $\Natural$-indexed family of manifolds with smooth \emph{face}\index{face maps} and \emphind{degeneracy maps} satisfying the simplicial relations, i.e.
\begin{align*}
\del_i\circ\del_j&=\del_{j-1}\circ\del_i, \text{ if } i<j\\
\sigma_i\circ \sigma_j&=\sigma_{j+1}\circ \sigma_i, \text{ if } i\leq j\\
\del_i\circ \sigma_j &= \begin{cases}
                    \sigma_{j-1}\circ\del_i , &\text{ if } i<j\\
		    \id , &\text{ if } i=j,j+1\\
		    \sigma_{j}\circ\del_{i-1} , &\text{ if } i>j+1
                   \end{cases}
\end{align*}\symindex[d]{\del_i}\symindex[s]{\sigma_i}
\begin{example}\label{ex:actionmanifold}
Our most important example of a simplicial manifold is the following (compare \cite[p.316]{Gomi},\cite[section 3.2]{Getzler}): \symindex[g]{G^\bullet M}
\[G^\bullet\times M = \{G^p\times M\}_{p\geq 0},\]
where $G^p$ stands for the $p$-fold Cartesian product of $G$. The face maps $G^p\times M\to G^{p-1}\times M$ are given as
\begin{align*}
\del_0(g_1, \dots , g_p, x) &= (g_2, \dots, g_p, x)\\
\del_i(g_1, \dots , g_p, x) &= (g_1, \dots , g_{i-1}, g_{i}g_{i+1},\dots , g_p, x)\text{ for } 1 \leq i \leq p - 1\\
\del_p(g_1, \dots , g_p, x) &= (g_1, \dots , g_{p-1}, g_p x)
\end{align*}
and the degeneracy maps for $i=0,\dots,p$ by
\begin{align*}
 \sigma_i: G^p\times M&\to G^{p+1}\times M\\
(g_1,\dots,g_p,x)&\mapsto (g_1,\dots,g_i,e,g_{i+1},\dots,g_p,x).
\end{align*}
These maps satisfy the simplicial relations.
\label{rm:action}
In particular for $p=1$ the map $\del_1$ equals the group action, while $\del_0$ is the projection onto the second factor, i.e. onto $M$. 
\end{example}

\begin{defn}[see, e.g., {\cite[p.75]{Dupont}}]
    The (fat) \emphind{geometric realization} of a simplicial manifold $M_\bullet$, is the topological space
 \[\|M_\bullet\|=\bigcup_{p\in\Natural} \Delta^p\times M_p/\sim\]
with the identifications
\[(\del^it,x)\sim(t,\del_i x)\text{ for any }x\in M_p,\, t\in\Delta^{p-1}, i=0,\dots,n \text{ and } p=1,2,\dots .\]
\end{defn}
\begin{example}
The geometric realization of the simplicial manifold $G^\bullet\times M$ is a model of $EG\times_G M$ and in particular if $M$ is single point the geometric realization of $G^\bullet\times pt$ is a model of the classifying space $BG$ (compare \cite[pp.75]{Dupont}).
\end{example}

Before giving a differential form model for equivariant cohomology, we will explain sheaves and sheaf cohomology for simplicial manifolds, as this is the technical basis for all further constructions and definitions.
\begin{defn}[see {\cite[(5.1.6)]{Deligne}}]
 A \emph{simplicial sheaf} on the simplicial manifold $M_\bullet$ is a collection of sheaves $\mathcal{F}^\bullet=\{\mathcal{F}^p\}_{p\in\Natural}$, where, for each $p$, $\mathcal{F}^p$ is a sheaf on $M_p$ and there are morphisms $\tilde\del_i:\del_i^{-1} \mathcal{F}^p\to \mathcal{F}^{p+1}$ and $\tilde{\sigma}_i:\sigma_i^{-1} \mathcal{F}^{p+1}\to \mathcal{F}^{p}$ satisfying the simplicial relations as stated above.
\end{defn}

The \emph{simplicial sheaf cohomology} is defined as the right derived functor of the global section functor \cite[def. 5.2.2.]{Deligne}, where global sections of a simplicial sheaf, are the equalizer
\[\ker \left(\tilde\del_0-\tilde\del_1\from \mathcal{F}^0(M_0)\to\mathcal F^1(M_1)\right).\]
This definition opens the question: Are there enough injectives? As Pierre Deligne is quite short on this and there are mistakes in the literature (see Remark \ref{rem:mistake}), we should give an answer. 
\begin{lemma}The category of simplicial sheaves has enough injectives.\end{lemma}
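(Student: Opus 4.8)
The plan is to reduce the statement to the classical fact that the category of abelian sheaves on a single manifold (indeed on any topological space) has enough injectives, by the standard device for producing injectives in ``varying'' functor categories. First I would collect the formal preliminaries. The category of simplicial sheaves is abelian, with kernels and cokernels formed degreewise: if $\phi\from\F^\bullet\to\mathcal G^\bullet$ is a morphism, then $(\ker\phi)^p=\ker(\phi^p)$ and $(\coker\phi)^p=\coker(\phi^p)$ carry structure maps because every inverse image functor $\del_i^{-1}$, $\sigma_i^{-1}$ — and hence $\alpha^{-1}$ for each morphism $\alpha$ of $\Delta$ — is exact, so a short diagram chase promotes the levelwise (co)kernels to a simplicial sheaf. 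The same remark shows that, for every $k$, the evaluation functor $\mathrm{ev}_k\from\F^\bullet\mapsto\F^k$ into the category of sheaves on $M_k$ is exact, that arbitrary products of simplicial sheaves exist and are computed degreewise, and therefore that any product of injective simplicial sheaves is again injective.

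The key construction is a right adjoint $R_k$ of $\mathrm{ev}_k$, a ``coinduction'' functor. For a sheaf $\mathcal G$ on $M_k$ I set
\[ R_k(\mathcal G)^n \;=\; \prod_{\alpha\in\Hom_\Delta([n],[k])}\; (M_\alpha)_*\,\mathcal G, \]
the product, over all simplicial operators $\alpha\from[n]\to[k]$, of the pushforwards of $\mathcal G$ along the structure maps $M_\alpha\from M_k\to M_n$ of the simplicial manifold. For $\beta\from[m]\to[n]$ the structure morphism $(M_\beta)^{-1}R_k(\mathcal G)^m\to R_k(\mathcal G)^n$ is defined component by component over $\gamma\from[n]\to[k]$: apply $(M_\beta)^{-1}$ to the projection onto the factor $\alpha=\gamma\circ\beta$, use $(M_{\gamma\beta})_*=(M_\beta)_*(M_\gamma)_*$, and then the counit $(M_\beta)^{-1}(M_\beta)_*\to\id$. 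A routine verification of the simplicial identities shows $R_k(\mathcal G)$ is a simplicial sheaf, and unravelling the data of a morphism $\F^\bullet\to R_k(\mathcal G)$ via the sheaf adjunctions $(M_\alpha)^{-1}\dashv(M_\alpha)_*$ shows it is exactly the datum of a single morphism $\F^k\to\mathcal G$, the passage being $\phi\mapsto\phi^k$ composed with the projection to the $\alpha=\id$ factor. Hence $R_k$ is right adjoint to $\mathrm{ev}_k$, and since $\mathrm{ev}_k$ is exact, $R_k$ sends injective sheaves on $M_k$ to injective simplicial sheaves.

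To conclude, let $\F^\bullet$ be a simplicial sheaf. For each $k$ choose a monomorphism $\F^k\hookrightarrow\mathcal I^{(k)}$ into an injective sheaf on $M_k$, which is possible by the classical theorem (for example via Godement resolutions). By adjunction this corresponds to a morphism $\F^\bullet\to R_k(\mathcal I^{(k)})$; assembling these over $k$ yields $\F^\bullet\to\mathcal I^\bullet\defeq\prod_k R_k(\mathcal I^{(k)})$, which is injective as a product of injectives. In degree $n$, composing with the projection of $\mathcal I^n$ onto the summand indexed by $k=n$ and $\alpha=\id_{[n]}$ recovers the chosen monomorphism $\F^n\hookrightarrow\mathcal I^{(n)}$; thus $\F^n\to\mathcal I^n$ is a monomorphism for every $n$, and therefore so is $\F^\bullet\to\mathcal I^\bullet$.

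I expect the only real difficulty to be the construction and verification of $R_k$ — that the structure morphisms above satisfy the simplicial relations and that the resulting functor genuinely solves the universal problem — and, conceptually, the point that one must not conflate ``injective in each degree'' with ``injective as a simplicial sheaf'': degreewise injectivity is insufficient, which is the source of some of the errors alluded to above, and only the coinduced objects $R_k(\mathcal I)$ (and their products and retracts) are actually injective. A more high-level alternative would be to identify simplicial sheaves with the abelian sheaves on the Grothendieck construction of $M_\bullet$ and quote that the sheaves on a site form a Grothendieck abelian category, hence have enough injectives; but checking the site axioms is of comparable length, and the explicit functors $\mathrm{ev}_k$ and $R_k$ are in any case the tools one wants for the subsequent computations of simplicial sheaf cohomology.
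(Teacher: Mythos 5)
Your proof is correct and takes essentially the same route as the paper's: both construct the right adjoint to the level-$k$ evaluation functor as a product, over the relevant hom-set in $\Delta$, of copies of the given sheaf transported along the structure maps (your $R_k$ is the paper's $S_p$), observe that a right adjoint of an exact functor preserves injectives, and embed $\F^\bullet$ into the product $\prod_k R_k(\mathcal I^{(k)})$. Your write-up is merely more explicit about the direction of the sheaf-transport functors, the verification of the adjunction, and why the resulting map is a monomorphism.
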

\begin{proof}
Let $\mathcal{F}^\bullet$ be a simplicial sheaf. Let $P_p$ be the functor from simplicial sheaves to sheaves, which sends a sheaf to its $p$-th level, i.e., $\mathcal F^\bullet$ is sent to  the sheaf $\mathcal{F}^p$ on $M_p$. Pick for any $\mathcal{F}^p$ an injective sheaf $I^p$ on $G^p\times M$, in which $\mathcal F^p$ embeds (for existence see e.g. \cite[section III.2]{Hartshorne}).

Now we construct a right adjoint of $P_p$ (analogous to \cite[p.409]{simphomtheory}): Let $B$ be a sheaf on $G^p\times M$. Define a simplicial sheaf on $G^\bullet\times M$ as
\[(S_pB)_q=\prod_{h\in \Delta(q,p)} h^{-1}B\]
By the adjointness of the functors, injectivity of $B$ implies injectivity of $S_pB$. Moreover the equality  
\[\Hom\left(\!\mathcal F^\bullet,\prod_p S_pI^p\!\right)=\prod_p \Hom(\mathcal F^\bullet,S_pI^p)=\prod_p \Hom(P_p\mathcal F^\bullet,I^p)=\prod_p \Hom(\mathcal F^p,I^p)\]
shows that the simplicial sheaf $\mathcal F^\bullet$ embeds into $\prod_p S_pI^p$ because for each $\mathcal F^p$ there is an injection into $I^p$.
\end{proof}
Now let 
\[0\to \mathcal F^\bullet\to I^{\bullet,0}\overset{\delta}{\to} I^{\bullet,1}\overset{\delta}{\to} \dots\]
be an injective resolution. Omitting the first columns and taking global sections yields to a double complex 
\[\left(I^{p,q}(M_p),\sum_{i=0}^p (-1)^i\tilde\del_i+(-1)^p \delta\right),\] whose cohomology is defined to be the cohomology
\[H^*(M_\bullet, \mathcal F^\bullet)=H^*\left(I^{p,q}(M_p),\sum_{i=0}^p (-1)^i\tilde\del_i+(-1)^p \delta\right)\] of the simplicial sheaf $\mathcal F^\bullet$ on the simplicial manifold $M_\bullet$.

The definition does not depend on the injective resolution chosen. In the non-simplicial case, this is a well-known fact: the identity on the space and the sheaf induces a morphism between two chosen injective resolutions, which is an isomorphism in cohomology. In the simplicial case, we need an additional argument: As before we obtain a morphism of the double complexes of global sections from the identity on the space. When taking cohomology in every horizontal line $(I^{p,*}(M_p),(-1)^p \delta)$, this morphism will induce an isomorphism between the bi-graded complexes. Hence we can apply the following lemma, to see, that we have an isomorphism in cohomology.
\begin{lemma}[{see e.g. \cite[Lemma 1.19]{Dupont}}]\label{lemma:specseq}
 Suppose $f\from (C_1^{*,*},d_1'+d_1'')\to (C_2^{*,*},d_2'+d_2'')$ is homomorphism of double complexes and the induced homomorphism \[(H^q(C_1^{p,*},d_1''),d_1')\to (H^q(C_2^{p,*},d_2''),d_2')\] is an isomorphism, then $f$ induces an isomorphism in the total cohomology of double complexes.
\end{lemma}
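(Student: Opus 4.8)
\emph{Proof proposal.} The plan is to compare the two spectral sequences obtained by filtering the total complexes $\tot(C_i^{*,*})$ by columns, i.e.\ $F^pC_i^n=\bigoplus_{p'\geq p}C_i^{p',\,n-p'}$. With this filtration the zeroth differential is the vertical one $d_i''$, so the first page is precisely the bigraded complex $\bigl(H^q(C_i^{p,*},d_i''),d_i'\bigr)$ appearing in the statement, with $d_1=d_i'$. Since $f$ is a morphism of double complexes it respects this filtration, hence induces a morphism of spectral sequences $E_r(f)\from E_r(C_1)\to E_r(C_2)$ commuting with all differentials $d_r$, and by hypothesis $E_1(f)$ is an isomorphism.

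The first step is the elementary comparison principle for spectral sequences: a morphism that is an isomorphism on some page $E_{r_0}$ is an isomorphism on every later page. Indeed, if $E_r(f)$ is an isomorphism of bigraded groups intertwining $d_r$, then it carries $\ker d_r$ isomorphically onto $\ker d_r$ and $\operatorname{im} d_r$ onto $\operatorname{im} d_r$, so it descends to an isomorphism $E_{r+1}(f)$; induction then gives an isomorphism on $E_\infty$.

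The second step is convergence, which is where one must be slightly careful about hypotheses. One needs both spectral sequences to converge to the total cohomology, with $E_\infty$ computing the associated graded of $H^*(\tot C_i)$ for a filtration that is exhaustive and, in each total degree, finite. This holds automatically in the first-quadrant case --- the only case we use, since in our application both the simplicial degree $p$ and the resolution degree $q$ run over $\Natural$ --- and is the implicit standing assumption here, as in \cite[Lemma 1.19]{Dupont}; for unbounded double complexes one would additionally need completeness of the filtration. Granting convergence, $f$ induces an isomorphism on each subquotient of the finite filtration of $H^n(\tot C_1)\to H^n(\tot C_2)$, and a downward induction on the filtration, using the five lemma on the short exact sequences relating successive filtration steps, upgrades this to an isomorphism on $H^n(\tot C_1)$ itself.

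I expect the only genuine obstacle to be the bookkeeping around convergence --- pinning down exactly which boundedness hypothesis makes ``isomorphism on $E_\infty$'' imply ``isomorphism on total cohomology''. A variant that avoids the five-lemma induction is to apply the argument to the mapping cone: the hypothesis says the cone of the chain map $E_1(f)$ is acyclic, so $E_1(\Cone f)=0$, hence $E_\infty(\Cone f)=0$, hence by the same convergence $\tot(\Cone f)$ is acyclic, which is equivalent to $f$ being a quasi-isomorphism on total complexes.
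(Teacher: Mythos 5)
Your argument is correct and is exactly the standard spectral-sequence comparison proof; the paper itself gives no proof of this lemma but defers to Dupont's Lemma 1.19, whose argument is the same column filtration, comparison of $E_1$-pages, and first-quadrant convergence that you spell out. Your explicit flagging of the boundedness hypothesis needed for convergence is appropriate, since all double complexes in the paper are indeed concentrated in the first quadrant.
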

\begin{remark}\label{rem:mistake}
One could have the idea (e.g. \cite[p.3]{Brylinski00},\cite[Section 3.2]{Gomi}) that an injective resolution on any simplicial level would be sufficient as the maps $\tilde \del_i$ lift by the injectivity of the sheaf. But as this lift is not unique, it is unclear that the simplicial relations hold and thus there is no general reason why $\del=\sum_i (-1)^i\tilde \del_i$ is a boundary operator. In fact one can construct the following counterexample: Take the trivial group, acting on a point, then all $\tilde\del_i\from \Z\to\Z$ are the identity. A injective resolution of the abelian group $\Z$ is given by $\Z\to\C\to \C/\Z$. Beside $\id\from \C\to\C$, the complex conjugation is also a lift of $\id_\Z$. Making appropriate choices, for the lifts $\tilde\del_i$ one finds an example where $\del\circ\del\neq 0$.
\end{remark}

In practice, one usually uses acyclic resolutions, instead of injective ones, to calculate cohomology. This works in the simplicial case, too. Let 
\[0\to \mathcal F^\bullet\to \mathcal A^{\bullet,0}\overset{\delta}{\to} \mathcal A^{\bullet,1}\overset{\delta}{\to} \dots\]
be an acyclic resolution, i.e., each $\mathcal A^{\bullet,k}$ is a simplicial sheaf and all but the zeroth cohomology of each sheaf $\mathcal A^{p,q}$ vanish. Let $I^{\bullet,*}$ be a simplicial injective resolution. The identity map on the simplicial manifold and the sheaf $\mathcal F^\bullet$ induce a homomorphism of the double complex of global sections (by injectivity of $I$), which induces an isomorphism of the bi-complexes, $(H^q(\mathcal A^{p,*},\delta),\del)\to(H^q(I^{p,*},\delta),\del)$, as acyclic resolutions calculate cohomology. Thus the last lemma implies the isomorphism in the cohomology of the double complexes.

In the examples, which we study later, the simplicial sheaf will actually not just be a sheaf of abelian groups, but a  cochain complexes of simplicial sheaves of abelian groups. A resolution for a chain complex goes by the name \emphind{Cartan-Eilenberg resolution} and exists for cochain complex in any abelian category with enough injectives (compare \cite[Section 5.7]{Weibel}). In our context, the resolution of a cochain complex of simplicial sheaves is a triple instead of a double complex. Nevertheless, one can form a total complex of the global sections of the triple complex and the cohomology of the cochain complex of simplicial sheaves is defined as the cohomology of this total complex. 

We will now discuss some explicit models for simplicial sheaf cohomology. 

\subsubsection{Simplicial de Rham cohomology}This exposition is based on \cite[Section 6]{Dupont}. Let $M_\bullet=\{M_p\}$ be a simplicial manifold. For any $p$, differential forms on $M_p$ form a the cochain complex of sheaves $(\Omega^*_{M_p},d)$. The face and degeneracy maps of $M_\bullet$ induce, via pullback, face and degeneracy maps between the differential forms on $M_p$ and $M_{p\pm 1}$. Thus, one obtains the simplicial sheaf $\Omega^{\bullet,*}$ of differential forms on $M_\bullet$. 

On the global sections of this sheaf 
\[\Omega^{p,q}(M)=\Omega^q(M_p),\]
there is a horizontal differential $d:\Omega^{p,q}(M_\bullet)\to \Omega^{p,q+1}(M_\bullet)$, given by the exterior differential and vertical differential 
\[\del: \Omega^{p,q}(M_\bullet)\to \Omega^{p+1,q}(M_\bullet),\]
given by the alternating sum of pullbacks along the face maps
\begin{equation}\label{del}
\del(\omega)=\sum_{i=0}^{p+1} (-1)^i\del_i^*\omega.
\end{equation}
\begin{prop}  $(\Omega^{p,q}(M_\bullet),d+(-1)^q\del)_{p,q}$ forms a double complex.
\end{prop}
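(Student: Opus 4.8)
The plan is to show that the total differential $D \defeq d + (-1)^q\del$ on the bigraded object $\Omega^{p,q}(M_\bullet) = \Omega^q(M_p)$ squares to zero; this is exactly the assertion that the data assemble into a double complex with total differential $d+(-1)^q\del$. Unwinding $D^2$ on a form $\omega\in\Omega^{p,q}$ and keeping track of the bidegree of each piece, one gets
\[
D^2\omega = (-1)^{q+1}\del d\omega + (-1)^q d\del\omega + \del^2\omega = (-1)^q\bigl(d\del - \del d\bigr)\omega + \del^2\omega,
\]
so it suffices to establish the three relations $d\circ d = 0$, $d\circ\del = \del\circ d$, and $\del\circ\del = 0$.

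The first two are immediate. The relation $d\circ d=0$ is just the fact that the exterior derivative on each manifold $M_p$ squares to zero, applied levelwise. The relation $d\circ\del=\del\circ d$ follows from the naturality of the exterior derivative under smooth maps: for each face map one has $\del_i^*\circ d = d\circ\del_i^*$, hence the exterior derivative commutes with the alternating sum $\del = \sum_i(-1)^i\del_i^*$ as well.

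The remaining point, $\del\circ\del = 0$, is the usual argument that an alternating sum of (co)face maps is a differential. For $\omega\in\Omega^q(M_p)$ one expands
\[
\del(\del\omega) = \sum_{j=0}^{p+2}\sum_{i=0}^{p+1}(-1)^{i+j}\,\del_j^*\del_i^*\omega = \sum_{i,j}(-1)^{i+j}\,(\del_i\circ\del_j)^*\omega,
\]
splits the double sum into the parts with $i<j$ and $i\geq j$, and on the part with $i<j$ applies the simplicial relation $\del_i\circ\del_j = \del_{j-1}\circ\del_i$. After a standard reindexing the two parts are seen to cancel in pairs, so the sum vanishes.

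There is no genuine obstacle here: $d\circ d=0$ and $d\circ\del=\del\circ d$ are formal, and $\del\circ\del=0$ is the standard cosimplicial computation. The only thing deserving a word of care is the sign twist $(-1)^q$ in $D$ — it is precisely this twist that converts the commutativity $d\del=\del d$ into the cancellation of the mixed term in $D^2$ (rather than leaving a nonzero $2\,d\del$), and one should check the signs in the identity $D^2\omega = (-1)^q(d\del-\del d)\omega + \del^2\omega$ displayed above carefully when writing this out.
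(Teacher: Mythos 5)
Your proof is correct and takes essentially the same route as the paper: the paper's proof also reduces $(d+(-1)^q\del)^2=0$ to the three facts $d^2=0$, $d\del=\del d$ (functoriality of $d$), and $\del^2=0$ (simplicial relations). You merely spell out the sign bookkeeping and the standard cosimplicial cancellation that the paper leaves implicit.
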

\begin{proof}
 $(d+(-1)^q\del)^2=0$, because $\del^2=0$ by the simplicial relations, $d\del=\del d$ as $d$ is functorial, $d^2=0$ by the well-known property of the exterior derivative.
\end{proof}

Moreover, since the differential forms form a sheaf of $C^\infty$-module, they form a fine and hence acyclic sheaf.

In particular, for the simplicial manifold $G^\bullet\times M$, we have the double complex $\Omega^q(G^p\times M)$, what is a first de Rham type model for equivariant cohomology by the following Proposition.

\begin{prop}[Prop. 6.1 of {\cite{Dupont}}]
    Let $M_\bullet$ be a simplicial manifold. There is a natural isomorphism \[H^*(\Omega^{\bullet,*}(M_\bullet),d+(-1)^*\del)\cong H^*(\|M_\bullet\|,\C).\]
\end{prop}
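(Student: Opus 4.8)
The plan is to factor the isomorphism through the simplicial sheaf cohomology of the locally constant sheaf $\underline\C$: first show that the double complex $(\Omega^{\bullet,\ast}(M_\bullet),d+(-1)^\ast\del)$ computes $H^\ast(M_\bullet,\underline\C)$, and then identify $H^\ast(M_\bullet,\underline\C)$ with $H^\ast(\|M_\bullet\|,\C)$.

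For the first step, the Poincar\'e lemma gives on each $M_p$ a resolution $\underline\C_{M_p}\to(\Omega^\ast_{M_p},d)$, and the pullbacks $\del_i^\ast$, $\sigma_i^\ast$ of forms are morphisms of complexes of sheaves. Hence, for each fixed $q$, the sheaves $\Omega^q_{M_p}$ together with the pullback morphisms form a simplicial sheaf $\Omega^{\bullet,q}$, and $0\to\underline\C\to\Omega^{\bullet,0}\to\Omega^{\bullet,1}\to\cdots$ is a resolution of the simplicial sheaf $\underline\C$ by acyclic simplicial sheaves, each $\Omega^{p,q}$ being fine as observed above. By the discussion preceding Lemma \ref{lemma:specseq} — that acyclic resolutions compute simplicial sheaf cohomology — the total cohomology of the double complex of global sections, which is exactly $H^\ast(\Omega^{\bullet,\ast}(M_\bullet),d+(-1)^\ast\del)$, equals $H^\ast(M_\bullet,\underline\C)$.

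For the second step, I compare the two natural spectral sequences of the filtration by simplicial degree. Filtering the double complex computing $H^\ast(M_\bullet,\underline\C)$ by $p$ and taking cohomology first in the form degree gives $E_1^{p,q}=H^q(M_p,\underline\C)=H^q(M_p,\C)$ with first differential $\del=\sum_{i=0}^{p+1}(-1)^i\del_i^\ast$. On the other hand, the skeletal filtration $\|M_\bullet\|^{(0)}\subset\|M_\bullet\|^{(1)}\subset\cdots$ of the fat realization gives a spectral sequence converging to $H^\ast(\|M_\bullet\|,\C)$; since $\|M_\bullet\|^{(p)}$ arises from $\|M_\bullet\|^{(p-1)}$ by attaching $\Delta^p\times M_p$ along $\del\Delta^p\times M_p$, the subquotient is $\Sigma^p\bigl((M_p)_+\bigr)$, so again $E_1^{p,q}=H^q(M_p,\C)$, with $d_1$ the alternating sum of face maps. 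A natural comparison map on singular (co)chains — coming from an Eilenberg--Zilber type equivalence between the singular chains of $\|M_\bullet\|$ and the total complex of the simplicial chain object $p\mapsto C^{\sing}_\ast(M_p)$ — relates the two filtered complexes and induces the above isomorphism on $E_1$; it is therefore an isomorphism on the abutments. All ingredients are functorial in $M_\bullet$, hence the isomorphism is natural. (Alternatively one can avoid leaving the world of forms by using Dupont's complex $A^\ast(M_\bullet)$ of compatible differential forms on the manifolds $\Delta^p\times M_p$, which maps to $\Omega^{\bullet,\ast}(M_\bullet)$ by restriction to $\{0\}\times M_p$ and to the singular cochains of $\|M_\bullet\|$ by fibrewise integration over the simplices, both maps being quasi-isomorphisms by a spectral sequence argument and the simplicial Stokes theorem, respectively.)

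The main obstacle is the second step, which is essentially the content of Dupont's Proposition~6.1 and whose argument we follow: one has to identify the subquotients of the skeletal filtration of the \emph{fat} realization, being careful about the degenerate simplices which we do not collapse; compute their cohomology; and, most delicately, match the Koszul signs so that the two first differentials genuinely coincide and the comparison map preserves filtrations. The first step, by contrast, is routine once the general statement that acyclic resolutions of simplicial sheaves compute simplicial sheaf cohomology is in hand.
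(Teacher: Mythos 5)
Your proof is correct and follows essentially the route the paper itself takes: the paper cites Dupont's Proposition 6.1 for this statement, and its surrounding text supplies exactly your first step (the sheaves $\Omega^{p,q}$ are fine, so the double complex of global sections computes $H^*(M_\bullet,\underline\C)$ by the acyclic-resolution argument) and invokes Dupont's Theorem 5.15 for the identification with $H^*(\|M_\bullet\|,\C)$, whose skeletal-filtration proof you reconstruct. No gaps worth flagging.
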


\subsubsection{Simplicial \texorpdfstring{Č}{C}ech cohomology}\label{sec:simpcechcoho}
\begin{defn}[see {\cite{Brylinski00,Gomi}}]\label{def:simpcover}
 A \emph{simplicial cover} \index{cover!simplicial} for the simplicial manifold $M_\bullet$ is a family $\mathcal U^\bullet=\{\mathcal U^{(p)}\}$\symindex[u]{\mathcal U^\bullet,\mathcal U^{(p)}} of open covers such that
\begin{enumerate}
 \item $\mathcal U^{(p)}=\{U^{(p)}_\alpha|\alpha\in A^{(p)}\}$ is an open cover of $M_p$, for each $p$, and
 \item the family of index sets forms a simplicial set $A^\bullet=\{A^{(p)}\}$ satisfying
 \item $\del_i(U^{(p)}_{\alpha})\subset U^{(p-1)}_{\del_i\alpha}$ and $\sigma_i(U^{(p)}_{\alpha})\subset U^{(p+1)}_{\sigma_i\alpha}$ for every $\alpha\in A^{(p)}.$
\end{enumerate}
\end{defn}
\begin{defn}[see {\cite{Brylinski00,Gomi}}]
Given a simplicial cover $\mathcal U^\bullet$, one forms the Čech chain groups $\check C^{\bullet,*}(\mathcal U^\bullet,\mathcal F^\bullet)$ by 
\[\check C^{p,q}(\mathcal U^\bullet,\mathcal F^\bullet)=\prod_{\alpha^{(p)}_0,\dots,\alpha^{(p)}_q\in A^{(p)}} F^p\left(U^{(p)}_{\alpha^{(p)}_0}\cap\dots\cap U^{(p)}_{\alpha^{(p)}_q}\right),\]\symindex[c]{\check C^{\bullet,*}}\index{simplicial \Cech{} chain complex}with the usual Čech boundary operator $\delta\from \check C^{p,q}\to \check C^{p,q+1}$ and the simplicial boundary map $\del\from\check C^{p,q}\to \check C^{p+1,q}$ defined as alternating sum as above.
\end{defn}
Observe, that the third condition of the simplicial cover ensures that $\del$ maps between the Čech groups. The \emphind{simplicial \Cech{} cohomology}, denoted by 
\[\check H^*(\mathcal U^\bullet, \mathcal F^\bullet),\]
is the cohomology of the double complex $(\check C^{p,q},\del,(-1)^p \delta)$. As in the non-simplicial case (see \cite[section III.4]{Hartshorne}), any simplicial open cover induces a canonical homomorphism
\[\check H^*(\mathcal U^\bullet, \mathcal F^\bullet)\to H^*(M_\bullet,\mathcal F^\bullet).\] Moreover, given a refinement $\mathcal V^\bullet$ of the simplicial open cover $\mathcal U^\bullet$, then the natural diagram 
\[\begin{tikzcd}\check H^*(\mathcal U^\bullet, \mathcal F^\bullet)\arrow{d}\arrow{r}& H^*(M_\bullet,\mathcal F^\bullet)\\
 \check H^*(\mathcal V^\bullet, \mathcal F^\bullet)\arrow{ur}&     
  \end{tikzcd}\]commutes. 
Thus one can form the limit over all refinements of simplicial open covers and obtains an isomorphism
\[\lim_{\mathcal U^\bullet} \check H^*(\mathcal U^\bullet, \mathcal F^\bullet)\to H^*(M_\bullet,\mathcal F^\bullet).\]
For more details see \cite{Brylinski00,Gomi}.

\subsubsection{Simplicial singular cohomology}\label{sec:simpsingcoho}
Let $A$ be an abelian group. Later, the most interesting cases for us will be $A\in\{\Z,\Real,\C,\C/\Z,\Real/\Z\}$. Then there is the locally constant sheaf $\underline A^\delta$, consisting of continuous maps to $A$ furnished with the discrete topology, in any simplicial degree. The maps $\tilde\del_i$ and $\tilde\sigma_i$ are given by pullback along $\del_i$ respectively $\sigma_i$. One can calculate $H^*(M_\bullet,\underline A)$ via singular cohomology.

\begin{defn}[see {\cite[81]{Dupont}}]\symindex[c]{C_\text{sing}^{\bullet,\bullet}}
    The \emph{simplicial singular cochain complex} \index{simplicial singular cochain complex} \[(C^{\bullet,\bullet}_\sing(M_\bullet,A),\del,\del_\sing)\] is the double complex consisting of groups
	\[C^{p,q}_\sing=C^q_\sing(M_p)=\map(C^\infty(\Delta^q,M_p),A)\]
of smooth singular cochains on each $M_p$ with group structure induced from $A$, vertical boundary map induced from the simplicial manifold and horizontal boundary map given by the singular boundary operator. 
\end{defn}
To obtain a double complex one has to use the boundary map $\del+(-1)^p\del_\sing$. A simplicial map $f_\bullet\from M_\bullet\to M'_\bullet$ induces a map of double complexes $f_\bullet^*\from C^{\bullet,\bullet}_\sing(M'_\bullet,A)\to C^{\bullet,\bullet}_\sing(M_\bullet,A).$
\begin{theorem}[Theorem 5.15. of \cite{Dupont}]\label{thm:realiso}
 There are functorial isomorphisms
\[H^*(\|M\|,A)=H^*_\sing(M_\bullet,A)\defeq H^*\left(C^{\bullet,\bullet}_\sing(M_\bullet,A),\del+(-1)^p\del_\sing\right)\] \symindex[h]{H^*_\text{sing}}\end{theorem}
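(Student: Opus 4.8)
The plan is to split the comparison into a ``smoothing'' step, which disappears into Lemma~\ref{lemma:specseq}, and a purely topological step identifying a double complex of continuous singular cochains with the cohomology of the fat realization. Write $E^{p,q}=C^q(M_p;A)$ for the \emph{continuous} singular cochains on $M_p$, equipped with the same horizontal differential $\del_\sing$ and the same vertical differential $\del=\sum_i(-1)^i\del_i^*$ (the face maps $\del_i$ are smooth, hence continuous). Since pullback of a cochain along any map commutes with restricting a cochain to smooth simplices, the restriction maps $E^{p,q}\to C^{p,q}_\sing$ assemble into a morphism of double complexes. Computing cohomology in the $\del_\sing$-direction at a fixed level $p$ turns it into the comparison map $H^q(M_p;A)\to H^q_\sing(M_p;A)$ between continuous and smooth singular cohomology of the manifold $M_p$, which is an isomorphism because the inclusion of smooth into all singular chains of a manifold is a chain homotopy equivalence (standard smoothing). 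By Lemma~\ref{lemma:specseq} the restriction map induces an isomorphism $H^*(\tot E^{\bullet,\bullet})\cong H^*_\sing(M_\bullet,A)$, so it remains to prove $H^*(\tot E^{\bullet,\bullet})\cong H^*(\|M_\bullet\|;A)$.

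For the topological step I would use the Eilenberg--Zilber map: pulling a singular cochain on $\|M_\bullet\|$ back along the characteristic map $\Delta^p\times M_p\to\|M_\bullet\|$ and evaluating on the prism (shuffle) triangulation of $\Delta^p\times\Delta^q$ yields a natural cochain map $C^*(\|M_\bullet\|;A)\to\tot E^{\bullet,\bullet}$ (dual to the classical shuffle quasi-isomorphism on chains). To see it is a quasi-isomorphism, compare the two evident filtrations. On the source, the skeletal filtration $\|M_\bullet\|^{(n)}=\bigcup_{p\le n}\Delta^p\times M_p/\!\sim$ gives, because the fat realization collapses no degeneracies, homeomorphisms $\|M_\bullet\|^{(n)}/\|M_\bullet\|^{(n-1)}\cong\Sigma^n\bigl((M_n)_+\bigr)$, so the associated spectral sequence has $E_1^{p,q}=\tilde H^{p+q}(\Sigma^p((M_p)_+);A)=H^q(M_p;A)$ with $d_1$ the alternating sum of face-pullbacks. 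On the target, the $p$-filtration of the double complex $\tot E^{\bullet,\bullet}$ has a spectral sequence with the same $E_1$-page and the same $d_1$. The Eilenberg--Zilber map respects both filtrations and, because the prism triangulation realizes the fundamental class of $\Delta^p$ in the top direction, induces the canonical identification on $E_1$; hence it is an isomorphism of spectral sequences from $E_1$ on. Both converge: in total degree $n$ only the finitely many terms with $p\le n$ survive, since $H^{n-p}(M_p)=0$ for $p>n$ (the skeletal tower is correspondingly eventually constant in each degree, killing $\varprojlim^1$). Therefore $H^*(\tot E^{\bullet,\bullet})\cong H^*(\|M_\bullet\|;A)$, and functoriality in $M_\bullet$ is immediate since every map above is natural. (Alternatively, one can argue bisimplicially: apply the smooth singular functor levelwise to $M_\bullet$, realize in the simplex direction first to recover $M_\bullet$ up to levelwise weak equivalence, and invoke invariance of the fat realization together with the Eilenberg--Zilber theorem for bisimplicial sets.)

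The main obstacle lies entirely in the topological step, and more precisely in the two facts I would rather cite than reprove: the identification $\|M_\bullet\|^{(n)}/\|M_\bullet\|^{(n-1)}\cong\Sigma^n((M_n)_+)$ of the skeletal subquotients, and the compatibility of the Eilenberg--Zilber/shuffle map with the skeletal filtration realizing the evident map on $E_1$. The latter is the only genuinely combinatorial point: one must check that the prism decomposition of $\Delta^p\times\Delta^q$ intertwines the singular coboundary on $\|M_\bullet\|$ with the differential $\del+(-1)^p\del_\sing$ of the double complex, with all signs matching. Everything else -- the smoothing comparison, the spectral-sequence bookkeeping, and functoriality -- is formal once Lemma~\ref{lemma:specseq} and these two inputs are available.
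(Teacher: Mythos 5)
The paper offers no proof of this statement at all --- it is quoted verbatim as Theorem 5.15 of Dupont's lecture notes --- so there is nothing internal to compare against; what you have written is essentially a reconstruction of Dupont's own argument. Your outline is correct: the skeletal filtration of the fat realization does give subquotients $\Sigma^n((M_n)_+)$ (no degeneracy collapses, and $\partial\Delta^n\times M_n\hookrightarrow\Delta^n\times M_n$ is a cofibration), the resulting first-quadrant spectral sequence has the $E_1$-page you describe with $d_1$ the alternating sum of face pullbacks, and the dual shuffle map compared against the column filtration of the double complex is exactly the standard mechanism. The one genuinely useful observation you add is the preliminary smoothing step: Dupont's theorem is stated for continuous singular cochains on a simplicial topological space, whereas the paper's $C^{p,q}_\sing$ is built from \emph{smooth} simplices, so the levelwise comparison of smooth and continuous singular cohomology fed into Lemma \ref{lemma:specseq} is a necessary (if routine) supplement that the paper's bare citation glosses over. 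The two inputs you flag for citation rather than proof --- the identification of the skeletal subquotients and the compatibility of the shuffle map with the filtrations, including signs --- are indeed the only nontrivial combinatorial content, and they are exactly what Dupont verifies.
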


To compare singular cohomology with general sheaf cohomology, one can use arguments of \cite[pp. 191-200]{Warner}. Sheafify the singular cochains $C^q_\sing(M_p)$: Let $\mathcal S^q(M_p,A)$ \symindex[s]{\mathcal S^*} be the sheaf associated to the presheaf \[M\subset U\mapsto\map(C^\infty(\Delta^q,U),A).\]
Then one has an acyclic resolution 
\[0\to \underline A_\bullet\to \mathcal S^0(M_\bullet,A)\to \mathcal S^1(M_\bullet,A)\to \dots\]
and hence
\[H^*(M_\bullet,A)=H^*(M_\bullet,\mathcal S^*(M_\bullet,A)).\]
On the other hand, the global sections of $\mathcal S^q(M_p,A)$ are exactly $C^q_\sing(M_p)$.

Thus we have shown the following theorem.
\begin{theorem}
	\[H^*(\|M\|,A)=H^*_{\text{sing}}(M_\bullet,A)=H^*(M_\bullet, \mathcal S^*) =H^*(M_\bullet,\underline A).\]
In particular, for $M_\bullet=G^\bullet\times M$, we obtain:
    \[H_G^*(M,A)=H^*_{\text{sing}}(G^\bullet\times M,A)=H^*(G^\bullet\times M,\underline A).\]
\end{theorem}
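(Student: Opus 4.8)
The statement is a chain of four identifications; three of them are essentially bookkeeping on top of results already in hand, and only one requires genuine work. The plan is as follows. The first equality $H^*(\|M\|,A)=H^*_\sing(M_\bullet,A)$ is precisely \autoref{thm:realiso} (Dupont), so there is nothing to prove there. For the others I would introduce the complex of simplicial sheaves $\mathcal S^\bullet=\{\mathcal S^q(M_p,A)\}$ obtained by sheafifying, on each manifold $M_p$, the presheaf $U\mapsto\map(C^\infty(\Delta^q,U),A)$ of smooth singular $q$-cochains; functoriality of sheafification turns the pullbacks along the smooth face and degeneracy maps of $M_\bullet$ into the morphisms $\tilde\del_i,\tilde\sigma_i$ (the simplicial relations being inherited), and the singular coboundary into a morphism of simplicial sheaves, so $\mathcal S^\bullet$ is indeed a cochain complex of simplicial sheaves.

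Next I would verify that $0\to\underline A_\bullet\to\mathcal S^0\to\mathcal S^1\to\cdots$ is an \emph{acyclic resolution} of $\underline A_\bullet$. Both properties are checked level-wise on $M_p$ and are the smooth-singular counterparts of the classical facts in \cite[pp.~191--200]{Warner}: exactness is checked on stalks, where it amounts to the smooth small-simplices theorem (barycentric subdivision), and acyclicity follows because each $\mathcal S^q(M_p,A)$ is fine -- a singular cochain can be decomposed along a partition of the domains of the simplices, producing partitions of unity subordinate to any open cover. Granting this, the acyclic-resolution principle for (complexes of) simplicial sheaves set up after \autoref{lemma:specseq}, via the Cartan--Eilenberg formalism, yields $H^*(M_\bullet,\underline A)=H^*(M_\bullet,\mathcal S^\bullet)$; moreover, since $\mathcal S^\bullet$ is already a complex of acyclic simplicial sheaves, no further resolution is needed and $H^*(M_\bullet,\mathcal S^\bullet)$ is computed directly by the total complex of the double complex of global sections $\Gamma(M_p,\mathcal S^q(M_p,A))$.

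It remains to identify that double complex with the simplicial singular double complex $C^{\bullet,\bullet}_\sing$, i.e. to match $\Gamma(M_p,\mathcal S^q(M_p,A))$ with $C^q_\sing(M_p)=\map(C^\infty(\Delta^q,M_p),A)$. Naturality of sheafification makes any such identification automatically compatible with the vertical (simplicial) and horizontal (singular) differentials, so the content is the Warner-type comparison: the canonical map from smooth singular cochains on $M_p$ to the global sections of their sheafification is a quasi-isomorphism, its kernel consisting of cochains that vanish on all sufficiently small simplices, which form an acyclic subcomplex. Applying \autoref{lemma:specseq} once more -- to promote this level-wise quasi-isomorphism of double complexes to an isomorphism of total cohomologies -- gives $H^*_\sing(M_\bullet,A)=H^*(M_\bullet,\mathcal S^\bullet)$, and concatenating the three identifications proves the theorem. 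I expect this last step -- controlling the ``small simplices'' discrepancy between presheaf sections and sheaf sections, and checking it is harmless across the whole double complex -- to be the only real obstacle; every other ingredient is either already proved in the section or a level-wise invocation of classical manifold sheaf theory. Finally, the \emph{in particular} is immediate: for $M_\bullet=G^\bullet\times M$ the geometric realization is a model of $EG\times_G M$, so $H^*(\|G^\bullet\times M\|,A)=H^*(EG\times_G M,A)=H_G^*(M,A)$ by the definition of equivariant cohomology.
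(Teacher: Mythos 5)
Your proposal follows essentially the same route as the paper: the first equality is Dupont's theorem, and the rest is the Warner-style comparison via the acyclic resolution $0\to\underline A_\bullet\to\mathcal S^0\to\mathcal S^1\to\cdots$ by sheafified smooth singular cochains, together with the identification of the resulting double complex of global sections with $C^{\bullet,\bullet}_\sing(M_\bullet,A)$. If anything, you are slightly more careful than the paper, which asserts that the global sections of $\mathcal S^q(M_p,A)$ \emph{are} $C^q_\sing(M_p)$, whereas the honest statement is the quasi-isomorphism modulo locally-zero cochains that you spell out.
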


\subsubsection{Simplicial cellular cohomology}\index{simplicial cellular chain complex}\label{sec:simpcellcoho}
The most handy cohomology theory for calculation is cellular cohomology. Recall (compare \cite[12]{Waldhausen}) that a \emphind{CW complex} is a topological space $X$ with a collection of subspaces, called cellular decomposition, 
\[ X_0\subset X_1\subset X_2 \subset \dots \subset X, \]
such that $X_0$ is discrete, $X_p$ is obtained from $X_{p-1}$ by attaching $p$-cells, $X=\bigcup_i X_i$, and $U\subset X$ is closed, if and only if $U\cap X_p$ is closed in $X_p$ for any $p\in \Natural$. A map $f\from X\to Y$ between cellular complexes is called cellular, if $f(X_p)\subset Y_p$. The cellular chain complex (see \cite[118-122]{Waldhausen}) is given by $C^n(X)=H^n_\sing(X_n,X_{n-1};A)$ and $d_\cell^n$ is the composition 
\[H^n(X_n,X_{n-1})\to H^n(X_n,\emptyset)\to H^{n+1}(X_{n+1},X_n)\]
of the map induced from the inclusion $(X_n,\emptyset)\subset (X_n,X_{n-1})$ and the connecting morphism of $(X_n,\emptyset)\subset (X_{n+1},\emptyset)\subset(X_{n+1},X_n)$.

By a cellular decomposition of the simplicial manifold $G^\bullet\times M$, we understand a collection of topological spaces $(X_{p,q})_{p,q\in\Natural}$, such that $X_{p,*}$ is a cellular decomposition of $G^p\times M$ and all face and degeneracy maps are cellular. Thus we receive a double complex, the \emphind{simplicial cellular chain complex}\symindex[c]{C^{\bullet,*}_\cell} $(C_\cell^q(G^p\times M),d_\cell+(-1)^q\del)$. We define \symindex[h]{H^*_\cell}$H_\cell^*(G^\bullet\times M,A)$ to be the cohomology of this double complex.

One has the following small proposition, for which I did not find a reference in the literature.
\begin{prop}
 There is an isomorphism
\[H^*_\cell(G^\bullet\times M,A)=H^*_\sing(G^\bullet\times M,A).\]
\end{prop}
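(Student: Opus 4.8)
The plan is to exhibit the cellular double complex of $G^\bullet\times M$ as the ordinary cellular cochain complex of a single CW complex, namely the fat realization $\|G^\bullet\times M\|$, and then to quote the classical non--simplicial agreement of cellular and singular cohomology together with Theorem \ref{thm:realiso}; no direct comparison of simplicial double complexes is needed.

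First I would put a CW structure on $\|G^\bullet\times M\|$. Equip each standard simplex $\Delta^p$ with its cell structure as a simplex, equip $G^p\times M$ with the cellular decomposition $X_{p,\bullet}$, and equip $\Delta^p\times(G^p\times M)$ with the product cell structure (legitimate since $\Delta^p$ is compact). The identifications $(\del^i t,x)\sim(t,\del_i x)$ defining $\|G^\bullet\times M\|$ are assembled from the face inclusions $\del^i$, which are cellular, and the face maps $\del_i$, which are cellular by hypothesis; so, inductively in $p$, the space $\|G^\bullet\times M\|$ is obtained by attaching $\Delta^p\times(G^p\times M)$ to the previous stage along the subcomplex $\del\Delta^p\times(G^p\times M)$ by a cellular map, hence is a CW complex.

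Next I would identify its cellular chains. The cells of $\|G^\bullet\times M\|$ are exactly the products of the unique top cell of some $\Delta^p$ with a cell of $X_{p,\bullet}$, so a $q$--cell of $G^p\times M$ contributes a $(p+q)$--cell and $C_n^\cell(\|G^\bullet\times M\|)=\bigoplus_{p+q=n}C_q^\cell(G^p\times M)$. Writing out the cellular boundary of such a product cell via $\partial(e_\Delta^p\times e^q)=(\partial e_\Delta^p)\times e^q+(-1)^p\,e_\Delta^p\times(\partial e^q)$ and unwinding the identifications, the first summand becomes $\sum_i(-1)^i(\del_i)_*e^q$, the simplicial differential $\del$, while the second becomes the cellular differential of $X_{p,\bullet}$. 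Hence $C_\bullet^\cell(\|G^\bullet\times M\|)$ is the total complex of the double complex of cellular chain groups $C_q^\cell(G^p\times M)$. Dualizing with coefficients in $A$ --- each total degree is a finite direct sum of these groups, so $\Hom(-,A)$ passes through the totalization, and $\Hom(C_q^\cell(G^p\times M),A)=H^q(X_{p,q},X_{p,q-1};A)=C^q_\cell(G^p\times M;A)$ --- shows that $C^\bullet_\cell(\|G^\bullet\times M\|;A)$ is, up to the standard sign conventions, exactly the total complex defining $H^*_\cell(G^\bullet\times M,A)$.

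It then remains to combine two known facts: the classical isomorphism $H^*_\cell(\|G^\bullet\times M\|;A)\cong H^*_\sing(\|G^\bullet\times M\|;A)$ for the single CW complex $\|G^\bullet\times M\|$ (valid in any, also infinite, dimension), and the identification $H^*_\sing(\|G^\bullet\times M\|;A)=H^*_\sing(G^\bullet\times M,A)$ of Theorem \ref{thm:realiso}. Chaining these gives the proposition. The step requiring genuine care --- and hence the main obstacle --- is the second paragraph: verifying carefully that the fat realization of the cellularly--decomposed simplicial manifold is a CW complex with precisely the asserted cells, being attentive to the product cell structure, to the use of the fat (as opposed to thin) realization so that only face maps enter, and to the signs in the boundary formula. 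Granting that, the statement follows from results already at hand.
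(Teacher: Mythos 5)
Your argument is correct, but it takes a genuinely different route from the paper's. The paper never touches the topology of the realization: it works entirely with the two double complexes and invokes Lemma \ref{lemma:specseq}, so all it needs is a levelwise comparison of singular and cellular cochains on each $G^p\times M$ that is \emph{functorial} (so that it commutes with the face maps and assembles into a map of double complexes). Since there is no direct functorial map between singular and cellular chains, the paper routes through Dold's complex of simplicial singular chains, which admits functorial quasi-isomorphisms to both; the induced maps are isomorphisms on the cohomology of the rows, and the double-complex lemma finishes the proof. You instead exhibit the cellular double complex as the honest cellular cochain complex of a CW structure on $\|G^\bullet\times M\|$ and reduce to the classical single-space comparison together with Theorem \ref{thm:realiso}. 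Your route is more geometric and proves slightly more (an explicit CW structure on the fat realization whose cellular complex is the total complex), and the point you flag as delicate does go through: only face maps enter the fat realization and these are cellular by hypothesis, the identifications only affect $\del\Delta^p\times(G^p\times M)$ so no product cell is collapsed, and for fixed total degree only finitely many bidegrees contribute, so dualizing commutes with totalization. The paper's route is shorter given that Lemma \ref{lemma:specseq} and the Dold comparison are already on hand, and it sidesteps any discussion of the topology of $\|G^\bullet\times M\|$; yours is self-contained at the level of spaces but requires the careful bookkeeping of your second paragraph.
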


\begin{proof} Given a map between the singular and cellular chains, Lemma \ref{lemma:specseq} would imply the result. Hence we are done, if we find such a map for normal, i.e., non-simplicial spaces, in a functorial manner. There is no map between singular and cellular chains in general, but one can construct a complex of so called simplicial singular chains (see \cite[Section V.8]{Dold}), and functorial quasi-isomorphisms to both, singular and cellular chains.
\end{proof}

\subsection{The Cartan model}\label{sec:Cartan}
A well-known de Rham-like model for equivariant cohomology goes back to Henri Cartan (\cite{Cartan}). Our Exposition follows \cite{Libine}. 
Let $G$ be a compact Lie group acting smoothly on the smooth manifold $M$ and denote the Lie algebra of $G$ by $\g=T_eG$. Let $S^*(\g\dual)$ be the symmetric tensor algebra of the (complex) dual of the Lie algebra $\g\dual$. The group $G$ acts on this algebra by the coadjoint action and on $\Omega^*(M)$ by pulling back forms along the map $m\mapsto gm$. Hence we have a $G$-action on $S^*(\g\dual)\otimes\Omega^*(M)$. The invariant part of this algebra $(S^*(\g\dual)\otimes\Omega^*(M))^G$ is exactly what one calls the \emphind{Cartan complex} and is denoted by $\Omega_G^*(M)$\symindex[o]{\Omega_G^*(M)}. In other words: The Cartan complex consists of $G$-equivariant polynomial maps $\omega\from \g\to \Omega^*(M)$. Let $\omega_1,\omega_2\in \Omega_G^*(M)$, then there is a wedge product 
\[(\omega_1\wedge \omega_2)(X)=\omega_1(X)\wedge \omega_2(X).\]
On this algebra one defines a differential as
\[d_C \omega (X)=d(\omega(X))+\iota(X^\sharp)\omega(X),\]
for $\omega\in \Omega_G^*(M))$ and $X\in \g$,
i.e., one takes the differential on the manifold and adds the contraction with the fundamental vector field. To make this differential raise the degree by one, the grading on $\Omega_G^*(M)$ is given by
\[\text{twice the polynomial degree} + \text{the differential form degree}.\]
\begin{lemma}
    $(\Omega^*_G,d_C)$ is a cochain complex.
\end{lemma}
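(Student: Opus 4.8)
The plan is to verify directly that $d_C$ squares to zero on $\Omega_G^*(M) = (S^*(\g^\vee)\otimes\Omega^*(M))^G$, and separately to check that $d_C$ indeed raises the total degree by one. The degree statement is immediate from the two summands: $d$ raises form degree by one (leaving polynomial degree fixed), contributing $+1$ to the total grading; $\iota(X^\sharp)$ lowers form degree by one but, since it is evaluated against the linear-in-$X$ assignment $X\mapsto X^\sharp$, it raises polynomial degree by one, contributing $-1+2 = +1$ to the total grading. So $d_C$ is homogeneous of degree $+1$.

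For $d_C^2 = 0$, I would fix $X\in\g$ and expand $(d_C\omega)(X) = d(\omega(X)) + \iota(X^\sharp)(\omega(X))$, then apply $d_C$ again. Writing $\iota_X$ for $\iota(X^\sharp)$, one gets
\[
(d_C^2\omega)(X) = d\,d(\omega(X)) + d\,\iota_X(\omega(X)) + \iota_X\,d(\omega(X)) + \iota_X\,\iota_X(\omega(X)).
\]
The first term vanishes since $d^2=0$, and the last vanishes since $\iota_X\iota_X = 0$. The middle two combine to $(d\iota_X + \iota_X d)(\omega(X)) = \mathcal L_{X^\sharp}(\omega(X))$ by Cartan's magic formula. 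So the whole computation reduces to showing that $X\mapsto \mathcal L_{X^\sharp}(\omega(X))$ vanishes as an element of $S^*(\g^\vee)\otimes\Omega^*(M)$.

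The key step — and the one I expect to be the real content rather than bookkeeping — is precisely that this Lie-derivative term vanishes, and here the $G$-invariance of $\omega$ is essential (this is why one restricts to the invariant part in the first place). The point is that invariance of $\omega$ under $G$ means: for each one-parameter subgroup $\exp(tX)$, the infinitesimal action sends $\omega$ to zero, i.e.\ $\ddt \big(\exp(-tX)^*\omega(\Ad_{\exp(-tX)}X)\big) = 0$; differentiating, the coadjoint piece contributes a term involving $[X,X]=0$ and what survives is exactly $\mathcal L_{X^\sharp}(\omega(X)) = 0$. I would state this carefully: invariance of $\omega\in(S^*(\g^\vee)\otimes\Omega^*(M))^G$ implies that for all $X$, the Lie derivative of the $\Omega^*(M)$-component along the fundamental vector field $X^\sharp$, evaluated with $\omega$'s polynomial argument frozen at $X$, cancels against the infinitesimal coadjoint action, and since that coadjoint term is quadratic-and-antisymmetric in $X$ it vanishes when both slots are the same $X$. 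Assembling these observations gives $(d_C^2\omega)(X) = 0$ for every $X\in\g$, hence $d_C^2 = 0$, so $(\Omega_G^*(M), d_C)$ is a cochain complex.
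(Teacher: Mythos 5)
Your degree count and your computation of $d_C^2$ are correct and essentially identical to the paper's: expand, kill $d^2$ and $\iota_X^2$, apply Cartan's magic formula to get $\mathcal L_{X^\sharp}(\omega(X))$, and then use equivariance of $\omega$ along the one-parameter subgroup $\exp(tX)$. Your phrasing of the last step (differentiate the constant function $t\mapsto \exp(-tX)^*\omega(\Ad_{\exp(-tX)}X)$ and observe that the coadjoint contribution is $-\omega'([X,X])=0$) is the product-rule version of the paper's observation that $\Ad_{\exp(-tX)}X=X$, so $\mathcal L_{X^\sharp}\omega(X)=\ddt\omega(\Ad_{\exp(-tX)}X)=0$; these are the same argument.

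There is, however, one omission: the statement that $(\Omega_G^*,d_C)$ is a cochain complex also requires $d_C$ to actually map $\Omega_G^*(M)=(S^*(\g\dual)\otimes\Omega^*(M))^G$ into itself, i.e.\ that $d_C\omega$ is again $G$-equivariant, and your plan never addresses this. It is not automatic from the formula, since $\iota(X^\sharp)$ depends on $X$ through the fundamental vector field. The paper checks it explicitly: using $(\Ad_g X)^\sharp = g_*X^\sharp$ and the naturality of $d$ and $\iota$ under pullback one computes
\[
d_C\omega(\Ad_g X)=d(g\cdot\omega(X))+\iota(g_*X^\sharp)\,(g\cdot\omega(X))=g\cdot\bigl(d(\omega(X))+\iota(X^\sharp)\omega(X)\bigr)=g\cdot(d_C\omega)(X).
\]
This is a routine verification, but without it your argument only shows that $d_C$ squares to zero as a map out of the invariant subalgebra, not that it defines a differential on it. Add this check and the proof is complete.
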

\begin{proof}
First, observe that $d_C$ increases the total degree by one, since $d$ increases the differential form degree, and the contraction $\iota$, while decreasing the form degree by one, increases the polynomial degree by one. Next, one has to check, that the differential really maps invariant forms to invariant forms and that it squares to zero.

Let $\omega\in \Omega_G^*(M))$ and $X\in \g$. 
\begin{align*}
d_C \omega(\Ad_g X)&=d(\omega(\Ad_g X))+\iota((\Ad_g X)^\sharp)\omega(\Ad_g X)\\
&=d(g \omega(X))+\iota(g X^\sharp g\invers) g(\omega(X))\\
&=g d(\omega(X))+g\iota( X^\sharp ) g\invers g(\omega(X))\\&=gd_C \omega 
\end{align*}
Thus $d_C \omega$ is $G$-equivariant. Moreover, we have
\[d_C^2\omega(X)= d^2\omega(X)+d\iota(X)\omega(X)+\iota(X)d\omega(X)+\iota(X)^2\omega(X)=L_X\omega(X)\]
and
\[L_X\omega(X)=\ddt\!\!\!\!\exp(tX)\omega(X)=\ddt\!\!\!\! \omega(\exp(-tX)X\exp(tX))=\ddt\!\!\!\! \omega(X)=0.\]
Thus $d_C$ squares to zero, i.e., it is a boundary operator.
\end{proof}

In the special case of $M=pt$, i.e., of a single point, the Cartan algebra reduces to the algebra of \emph{invariant symmetric polynomials} \index{invariant symmetric polynomials}\symindex[i]{I^k(G)}
\[I^k(G)=((S^*(\g\dual)\otimes\Omega^*(pt))^G)^k=(S^k(\g\dual))^G.\]

\subsection{Getzlers resolution}\label{sec:Getzler}
In order to investigate cohomology of actions of non-compact groups, Ezra Getzler \cite[Section 2]{Getzler} defines a bar-type resolution of the Cartan complex. We will apply his ideas slightly different: The complex defined by Getzler will allow us to compare equivariant integral cohomology (defined via the simplicial manifold) with equivariant cohomology defined by the Cartan model.

Let, as before, a Lie group $G$ act on a smooth manifold $M$ from the left. Define $\C$-vector spaces $C^p(G,S^*(\g\dual)\otimes \Omega^*(M))$ consisting of smooth maps, from the $p$-fold Cartesian product
\[G^p\to S^*(\g\dual)\otimes \Omega^*(M),\]
to the space of polynomial maps from $\g$ to differential forms on $M$. We give these groups a bigrading: The horizontal grading is the one of $S^*(\g\dual)\otimes \Omega^*(M)$ defined above and the vertical grading is  $p$. The Cartan boundary operator $d+\iota$ now induces a map $(-1)^p(d+\iota)$, which increases the horizontal grading by 1 in any row. As we are not restricted to the $G$-invariant part of $S^*(\g\dual)\otimes \Omega^*(M)$, this map will not square to zero, but \[\left((-1)^p(d+\iota)\right)^2=d\iota+\iota d=L\] is the Lie derivative (see e.g. \cite[Proposition 1.121]{GHL}). In vertical direction, there is a differential
\[\bar d\from C^k(G,S^*(\g\dual)\otimes \Omega^*(M))\to C^{k+1}(G,S^*(\g\dual)\otimes \Omega^*(M))\]
defined by
\begin{multline*}
 (\bar d f)(g_0,\dots,g_k|X)\defeq f(g_1,\dots,g_k|X)+ \sum_{i=1}^k (-1)^i f(g_0,\dots,g_{i-1}g_i,\dots,g_k|X)\\+(-1)^{k+1}g_kf(g_0,\dots,g_{k-1}|\Ad(g_k\invers)X)
\end{multline*}
for $g_0,\dots,g_k\in G$ and $X\in \g$.

Note, in particular, that the kernel of
\[\bar d\from C^0(G,S^*(\g\dual)\otimes \Omega^*(M))\to C^1(G,S^*(\g\dual)\otimes \Omega^*(M))\] 
is exactly $\Omega_G^*(M)$. Moreover, in case of a discrete Group $G$, $\g=0$ and thus one checks, that 
\[C^p(G,S^*(\g\dual)\otimes \Omega^*(M))=C^p(G,\Omega^*(M))= \Omega^{p,*}(G^\bullet\times M)\] and $\bar d$ is equal to $\del$.

In the case of a compact Lie group, the map $\bar d$ admits a contraction (compare, e.g., \cite[322]{Gomi}):
\begin{lemma}\label{lem:IntGroupGetzler} Integration over the group, with respect to a right invariant probability measure, defines a map 
\begin{align}
\int_G:C^p(G,S^*(\g\dual)\otimes \Omega^*(M))&\to C^{p-1}(G,S^*(\g\dual)\otimes \Omega^*(M)) \label{eq:IntGroupGetzler}\\
\left(\int_G f\right)(g_1,\dots,g_{p-1},m)&=(-1)^i\int_{g\in G} f(g,g_1,\dots,g_{p-1},m)dg\nonumber
\end{align}
such that $\bar d \int_G f=f$ if $\bar d f=0$.
\end{lemma}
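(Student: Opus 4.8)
The plan is to verify that $\int_G$ is, up to the sign convention adopted in the statement, a contracting homotopy for $\bar d$ in positive simplicial degree; the asserted identity for $\bar d$-closed cochains then follows immediately. Write $s\from C^p(G,S^*(\g\dual)\otimes\Omega^*(M))\to C^{p-1}(G,S^*(\g\dual)\otimes\Omega^*(M))$ for the unsigned integration map $(sf)(g_1,\dots,g_{p-1}\mid X)=\int_{g\in G}f(g,g_1,\dots,g_{p-1}\mid X)\,dg$, the integral being formed pointwise on $M$ against a right-invariant probability measure on $G$. The first point to settle is that $sf$ is again a smooth cochain: for each fixed $X\in\g$ the integrand is a smooth family of differential forms on $M$ parametrised by $G\times G^{p-1}$, and since $G$ is compact one may differentiate under the integral sign, so $sf$ depends smoothly on $(g_1,\dots,g_{p-1},m)$ and is still polynomial in $X$.

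The heart of the argument is the homotopy identity $s\circ\bar d+\bar d\circ s=\id$ on $C^p$ for $p\ge 1$. I would prove it by expanding both composites termwise. Applying $s$ to $(\bar d f)(g,g_0,\dots,g_{p-1}\mid X)$ and integrating over $g$, the summands of $\bar d f$ contribute as follows. The leading summand $f(g_0,\dots,g_{p-1}\mid X)$ is constant in $g$, so integrating over the probability space $G$ reproduces it — this is the term that yields the right-hand side. The summand $-f(gg_0,g_1,\dots,g_{p-1}\mid X)$ is handled by right invariance of the measure, i.e. the substitution $g\mapsto gg_0\invers$, which turns it into $-(sf)(g_1,\dots,g_{p-1}\mid X)$. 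Each merging summand $(-1)^i f(g,\dots,g_{i-2}g_{i-1},\dots\mid X)$ with $i\ge 2$ becomes, after integration and a shift of index, exactly minus the corresponding merging summand of $\bar d(sf)$. Finally the summand $(-1)^{p+1}g_{p-1}f(g,g_0,\dots,g_{p-2}\mid\Ad(g_{p-1}\invers)X)$ involves the $G$-action and the $\Ad$-twist only through $g_{p-1}$, never through the integration variable, so both pass through the integral and this term equals minus the last summand of $\bar d(sf)$. Summing everything gives $s\bar d f=f-\bar d(sf)$, as desired; in particular $\bar d(sf)=f$ whenever $\bar d f=0$.

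The only input that is not pure bookkeeping is the existence of a right-invariant probability measure on $G$ — that is, the compactness of $G$ — together with the two elementary facts that the $G$-action on $S^*(\g\dual)\otimes\Omega^*(M)$ and the adjoint action on $\g$ are linear and hence commute with integration. I therefore expect the only delicate step to be the careful term-by-term matching of the two alternating sums: keeping track of the index shift between the merging terms of $\bar d f$ and those of $\bar d(sf)$, and in particular remembering that it is the probability normalisation that creates the identity term while it is right invariance alone that removes the $gg_0$ term. Nothing here is conceptually difficult; the work is entirely in the signs and indices.
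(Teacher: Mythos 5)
Your proof is correct and follows essentially the same route as the paper: a term-by-term expansion of the alternating sums in which the probability normalisation produces the identity term, right invariance of the measure absorbs the $f(gg_1,\dots)$ term, and the remaining summands commute with the integral. The only (harmless) difference in packaging is that you establish the full homotopy identity $s\bar d+\bar d s=\id$ and then specialise to $\bar d f=0$, whereas the paper substitutes the hypothesis $\bar d f=0$ directly into the integrand midway through the computation of $\bar d\int_G f$.
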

\begin{proof}This is proven by a direct calculation:
 \begin{align*}&\left(\bar d \int_G \omega \right)(g_1,\dots,g_p,m)\\
&=\left(\int_G f\right)(g_2,\dots,g_p|X)+ \sum_{i=2}^p (-1)^i \left(\int_G f\right)(g_1,\dots,g_{i-1}g_i,\dots,g_p|X)\\&\quad +(-1)^{p+1}g_p\left(\int_G f\right)(g_1,\dots,g_{p-1}|\Ad(g_p\invers)X)\\
&=\int_G f(g,g_2,\dots,g_p|X)dg+ \sum_{i=2}^p (-1)^i \int_G f(g,g_1,\dots,g_{i-1}g_i,\dots,g_p|X)dg\\&\quad+\int_G g_pf(g,g_1,\dots,g_{p-1}|\Ad(g_p\invers)X)dg\\
&=\int_G \Bigg(f(g,g_2,\dots,g_p|X)+ \sum_{i=2}^p (-1)^i  f(g,g_1,\dots,g_{i-1}g_i,\dots,g_p|X)\\&\quad+(-1)^{p+1} g_pf(g,g_1,\dots,g_{p-1}|\Ad(g_p\invers)X)\Bigg)dg\\
\intertext{Now we apply $\bar d f(g,g_1,\dots,g_p|X)=0$}
&=\int_G \left(f(g_1,\dots,g_p|X)-f(gg_1,\dots,g_p|X)+f(g,g_2,\dots,g_p|X)\right)dg\\
&=f(g_1,\dots,g_p|X)-\int_G f(gg_1,g_2,\dots,g_p|X)dg+\int_G f(g,g_2,\dots,g_p|X)dg\\
&=f(g_1,\dots,g_p|X)
 \end{align*}
\end{proof}

Thus, for compact groups, the vertical cohomology of this bi-graded collection of groups is the Cartan complex. \\[2ex]
One can turn the bi-graded collection $C^p(G,S^*(\g\dual)\otimes \Omega^*(M))$ of groups into a double complex. Therefore Getzler defines another map,
\[\bar{\iota} \from C^p(G,S^l(\g\dual)\otimes \Omega^m(M))\to C^{p-1}(G,S^{l+1}(\g\dual)\otimes \Omega^m(M)),\]
given by the formula
\[(\bar{\iota} f)(g_1,\dots,g_{p-1}|X)\defeq \sum_{i=0}^{p-1}(-1)^i \ddt f(g_1,\dots,g_i,\exp(tX_i),g_{i+1},\dots,g_{p-1}|X),\]
where $X_i=\Ad(g_{i+1}\dots g_{p-1})X$. 
 \begin{lemma}[Lemma 2.1.1. of {\cite{Getzler}}]The map $\bar{\iota}$ has the following properties:
  \[\bar{\iota}^2=0 \text{ and }\bar d\bar\iota+ \bar\iota\bar d= -L.\]
 \end{lemma}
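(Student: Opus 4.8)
The plan is to verify both identities by direct computation from the defining formula for $\bar\iota$, exploiting the structure $X_i=\Ad(g_{i+1}\cdots g_{p-1})X$ and the fact that $\ddt$ applied to an $\exp(tX_i)$-insertion produces the contraction by the corresponding fundamental vector field. First I would establish $\bar\iota^2=0$: applying $\bar\iota$ twice inserts two exponential factors $\exp(sX_i)$ and $\exp(tX_j)$ at positions $i<j$ (or $j<i$), with signs $(-1)^i$ and $(-1)^j$; after differentiating in $s$ and $t$ at $0$ and carefully accounting for how inserting an extra slot shifts the indices in the second application, the term coming from inserting first at slot $i$ then at slot $j$ cancels against the term inserting first at slot $j-1$ then at slot $i$ (the same pair of positions in the original argument list, reached in the opposite order), because the signs differ by $(-1)$. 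This is the standard bar-complex bookkeeping and is routine once the index shifts are written out.

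Next I would prove the anticommutation relation $\bar d\bar\iota+\bar\iota\bar d=-L$, where $L$ denotes the Lie derivative operator $d\iota+\iota d$ acting on $S^*(\g\dual)\otimes\Omega^*(M)$ (acting fibrewise on the values of $f$). The idea is to expand both $\bar d\bar\iota f$ and $\bar\iota\bar d f$ using the explicit formulas for $\bar d$ and $\bar\iota$, and to match terms. Most of the resulting terms pair off: when $\bar d$ merges two group elements $g_ig_{i+1}$ or deletes the first or last one, and $\bar\iota$ inserts an $\exp(tX_k)$ at a slot not adjacent to the affected position, the two composites $\bar d\bar\iota$ and $\bar\iota\bar d$ produce identical terms with opposite signs. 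The terms that survive are the ``boundary'' interactions: where the inserted $\exp(tX_k)$ sits next to a slot that $\bar d$ is about to merge or delete. In those cases, merging $\exp(tX_k)g_k$ or $g_k\exp(tX_k)$ and then differentiating, versus differentiating first, differ precisely by the infinitesimal action — which is exactly what assembles into the contraction operator $\iota$, and combined with the exterior differential $d$ already present (recall $((-1)^p(d+\iota))^2=L$ from the discussion preceding the lemma) yields $-L$ on the nose. One also has to check the $\Ad(g_p\invers)$ twist in the last term of $\bar d$ interacts correctly with the $X_i$'s; here the identity $\Ad(g_p\invers)X_i$ matches the shifted $X_i$ appearing after deletion, so no extra terms appear.

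The main obstacle I anticipate is purely combinatorial: getting every sign and every index shift correct when composing two operators that each either insert a slot (changing the length from $p$ to $p-1$ going up in polynomial degree, or rather changing $C^p\to C^{p-1}$) or alter the slot structure. It is easy to be off by a sign globally, which is why the target is $-L$ rather than $+L$; I would fix conventions early (the $(-1)^i$ in $\bar\iota$ and the $(-1)^i$, $(-1)^{k+1}$ in $\bar d$) and track a single representative term through each composite to pin down the overall sign before claiming the general cancellation. Since this is Lemma 2.1.1 of \cite{Getzler}, I would ultimately cite that reference for the detailed verification and present here only the conceptual skeleton: $\bar\iota$ is a contracting-type operator built from fundamental vector fields, its square vanishes by antisymmetry of double insertions, and its anticommutator with the bar differential $\bar d$ collapses to the fibrewise Lie derivative because merging group elements commutes with the infinitesimal insertion up to exactly the contraction term.
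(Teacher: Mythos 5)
Your proposal takes essentially the same route as the paper: the paper's entire proof is the one-line remark that the identities follow by ``recollection of the sums in the definition of $\bar\iota$ and $\bar d$'' together with a citation of Getzler, and your sketch just fills in the shape of that recollection (antisymmetric cancellation of double insertions for $\bar\iota^2=0$, survival of the boundary terms for the anticommutator) before likewise deferring to Getzler. One small caution: the $-L$ arises directly as $\ddt$ of the group element $\exp(tX)$ acting on the value of $f$ in the last term of $\bar d$, i.e.\ it is the fibrewise Lie derivative on the nose; the de Rham operators $d$ and $\iota$ that you invoke via $(d+\iota)^2=L$ play no role in this purely group-direction computation and should not be needed to ``assemble'' the answer.
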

\begin{proof}
 This is shown in \cite{Getzler} by recollection of the sums in the definition of $\bar{\iota}$ and $\bar d$.
\end{proof}

Moreover one obtains:
\begin{lemma}[Corollary 2.1.2. of {\cite{Getzler}}]
 $d_G= \bar d + \bar \iota + (-1)^p(d + \iota)$ is a boundary operator on the total complex $\bigoplus_{p+2q+r=n} C^p(G,S^q(\g\dual)\otimes \Omega^r(M))$.
\end{lemma}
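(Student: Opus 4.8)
The plan is to verify directly that $d_G^2 = 0$ by expanding $d_G = \bar d + \bar\iota + (-1)^p(d+\iota)$ and collecting terms according to which bidegree shifts they produce. Since the three constituent operators move in different directions on the triple-indexed collection (the operator $\bar d$ raises $p$ by one; the operator $\bar\iota$ lowers $p$ by one and raises $q$ by one; the Cartan operator $(-1)^p(d+\iota)$ raises the Cartan degree while leaving $p$ fixed), most cross terms are forced to cancel in pairs by the lemmas already established. Concretely, I would write
\[
d_G^2 = \bar d^2 + \bar\iota^2 + \bigl((-1)^p(d+\iota)\bigr)^2 + \bigl(\bar d\bar\iota + \bar\iota\bar d\bigr) + \bigl(\bar d\,(-1)^p(d+\iota) + (-1)^p(d+\iota)\,\bar d\bigr) + \bigl(\bar\iota\,(-1)^p(d+\iota) + (-1)^p(d+\iota)\,\bar\iota\bigr)
\]
and treat the six groups of terms separately.

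First, $\bar d^2 = 0$: this is the standard fact that the (inhomogeneous) bar differential squares to zero, which one checks exactly as in group cohomology, since the $\Ad$-twist in the last term is compatible with the face identifications. Second, $\bar\iota^2 = 0$ and $\bar d\bar\iota + \bar\iota\bar d = -L$ are precisely the content of the previous lemma (Lemma 2.1.1 of \cite{Getzler}), so these contribute $-L$. Third, $\bigl((-1)^p(d+\iota)\bigr)^2 = (d+\iota)^2 = d\iota + \iota d = L$, as already noted in the text where the Cartan operator fails to square to zero off the invariant part; this cancels the $-L$ from the previous group. The remaining work is the two mixed groups involving $\bar d$ or $\bar\iota$ together with $(d+\iota)$.

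For the group $\bar d\,(-1)^p(d+\iota) + (-1)^p(d+\iota)\,\bar d$, the point is that $d+\iota$ acts fiberwise on the value in $S^*(\g\dual)\otimes\Omega^*(M)$ while $\bar d$ reshuffles and composes the group arguments and applies an $\Ad(g_k\invers)$-twist in the last summand; one must check that $d+\iota$ commutes with this $\Ad$-twist — which it does, by $G$-equivariance of $d$ and of contraction, exactly as in the proof that $d_C$ preserves $\Omega_G^*(M)$ — and then the sign $(-1)^p$ versus $(-1)^{p+1}$ on adjacent rows makes the two composites cancel. The analogous bookkeeping for $\bar\iota\,(-1)^p(d+\iota) + (-1)^p(d+\iota)\,\bar\iota$ is where I expect the real labor: here $\bar\iota$ differentiates in an inserted group variable $\exp(tX_i)$ with $X_i = \Ad(g_{i+1}\cdots g_{p-1})X$, so one has to track how $d+\iota$ interacts with the $X$-dependence introduced by these conjugated insertions, and reconcile the signs $(-1)^i$ in $\bar\iota$ with the row-sign $(-1)^p$ flipping to $(-1)^{p-1}$. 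This is a term-by-term recollection entirely in the spirit of the proof Getzler gives for $\bar d\bar\iota + \bar\iota\bar d = -L$, and it is the main obstacle only in the sense of being the longest routine computation; no new idea is needed. Once all six groups are seen to vanish or cancel, $d_G^2 = 0$ follows, and the total degree $p + 2q + r$ is manifestly raised by one by each of $\bar d$, $\bar\iota$ and $(d+\iota)$, so $d_G$ is a boundary operator on $\bigoplus_{p+2q+r=n} C^p(G,S^q(\g\dual)\otimes\Omega^r(M))$.
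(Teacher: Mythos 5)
Your proposal is correct and follows essentially the same route as the paper: expand $d_G^2$, use $\bar d^2=\bar\iota^2=0$ and $\bar d\bar\iota+\bar\iota\bar d=-L$ from the preceding lemma, cancel this against $(d+\iota)^2=L$, and kill the cross terms by commutation together with the sign flip $(-1)^p$ versus $(-1)^{p\pm1}$. The only divergence is that the step you brace for as ``the real labor'' --- the interaction of $\bar\iota$ with $d+\iota$ --- is dispatched in the paper by the one-line observation that $d$ and $\iota$ act only on the values in $S^*(\g\dual)\otimes\Omega^*(M)$ while $\bar\iota$ differentiates only in the group variables, so for fixed $X$ the two operations manifestly commute and no term-by-term recollection is needed.
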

\begin{proof} $d_G$ increases the total index by one, as $\bar d$ increases the first index, $d$ increases the third index, $\iota$ decreases the third, while it is increasing the second index and $\bar\iota$ decreases the first index, while it is increasing the second one.

As $d$ and $\iota$ are equivariant under the $G$-action, they commute with $\bar d$. And as $d$ and $\iota$ only act on the manifold $M$ and not on the group part, the same is true for $\bar\iota$. Thus
 \begin{align*}
  d_G^2&=(\bar d + \bar\iota)^2 +(-1)^p(\bar d + \bar \iota)(d + \iota) +(-1)^{p\pm 1}(d + \iota)(\bar d + \bar \iota)+ (d + \iota)^2\\
&=\bar d\bar \iota+ \bar\iota \bar d + (d\iota+\iota d)\\
&=-L + L = 0.
 \end{align*}
\end{proof}
\begin{remark}
    The reader, who compares this with the original paper of Getzler will note that we changed some signs. It just seems more natural to us in this way. Furthermore Getzler uses some reduced subcomplex, which is, by standard arguments on simplicial modules (compare Proposition 1.6.5 in \cite{Loday}), quasi-isomorphic to the full complex, which we have taken.
\end{remark}
One can check that \[\left(\left(\bigoplus_{\substack{p+q=n\\q+r=k}} C^p(G,S^q(\g\dual)\otimes \Omega^r(M))\right)_{\!\!\!n,k}\hspace{-.5em},\bar d+(-1)^p\iota,(-1)^pd+\bar\iota\right)\] is a double complex.  But this point of view will not fit to the construction, which we want to do with this bigraded module later: We want to turn the groups $C^p(G,S^*(\g\dual)\otimes \Omega^*(M))$ into simplicial sheaves on $G^\bullet\times M$.
\begin{defn}
    A \emphind{simplicial homotopy cochain complex} of modules is a triple $(M^{\bullet,*}\!,f,s)$, where $M^{\bullet,*}$ is a $\Z$-graded simplicial module, $f$ is a map of simplicial modules, which increases the degree by one and $s$ is a simplicial zero homotopy of $f^2$ which commutes with $f$ and squares to zero, i.e.,\[s\del+\del s=-f^2,\quad sf=fs, \text{ and } s^2=0.\]
\end{defn}
\begin{example}
Observe that
\[\left(C^\bullet(G,S^*(\g\dual)\otimes \Omega^*(M)),d+\iota,\bar\iota\right)\]
is a simplicial homotopy cochain complex. 
\end{example}
\begin{defn2}
The total complex of a simplicial homotopy cochain complex $(M^{\bullet,*}\!,f,s)$ is the chain complex
\[\left(\left(\bigoplus_{p+q=n}M^{p,q}\right)_{\!\!\!n}\!\!,\,\del+s+(-1)^p f\right).\]
\end{defn2}
\begin{proof}
    We have to check that $\del+s+(-1)^p f$ defines a boundary map. Therefore calculate 
\begin{align*}(\del+s+(-1)^p f)^2&=\del^2+s^2+\del s+ s\del+(-1)^p (\del+s) f+(-1)^{p-1} f (\del+s)+f^2\\&=s\del+\del s+f^2\\&=0.\end{align*}
\end{proof}
Observe that the total complexes of the interpretation of $C^\bullet(G,S^*(\g\dual)\otimes \Omega^*(M))$ as double complex and as simplicial homotopy cochain complex coincide. Moreover, note for our applications later, that in the first column, of the double complex interpretation and the degree zero part of the interpretation as simplicial homotopy cochain complex are equal. In formulas this means
\[\left(\bigoplus_{\substack{p+q=n\\q+r=k}} C^p(G,S^q(\g\dual)\otimes \Omega^r(M))\right)_{\!\!\!n,0}\hspace{-.5em}=\;C^n\left(G,S^0(\g\dual)\otimes \Omega^0(M)\right).\]

\subsection{A quasi-isomorphism}\label{sec:quasiiso}
In this section, we will discuss a map defined in \cite[Section 2.2.]{Getzler}. It will relate the complex $C^*(G,S^*(\g\dual)\otimes \Omega^*(M))$ from the last section to the double complex $\Omega^*(G^\bullet\times M)$, which consists in degree $(p,q)$ of $q$-forms on $G^p\times M$ with horizontal boundary map $d=d_{G^p}+d_M$ and vertical boundary map $\del$ from the simplicial manifold structure. Thus we have an explicit identifications of chains in the one complex with chains in the other complex. This will allow us to compare our definition of equivariant differential cohomology (Section \ref{sec:newdef}) with definitions given before.

\begin{defn}[Def. 2.2.1. of \cite{Getzler}] \label{def:Getzlermap} The map
\[\mathcal J\from \Omega^*(G^p\times M)\to \bigoplus_{l=0}^p C^l(G,S^*(\g\dual)\otimes \Omega^*(M))\]\symindex[j]{\mathcal J} \index{Getzler's map}
is defined by the formula
 \[\mathcal J(\omega)(g_1,\dots,g_l|X)\defeq\sum_{\pi\in S(l,p-l)} \sgn(\pi)\left(i_\pi\right)^*\left(\iota_{\pi(l+1)}(X^{(\pi)}_{l+1})\dots\iota_{\pi(p)}(X^{(\pi)}_p)\omega\right).\]
Here $S(l,p-l)$ is the set of shuffles, i.e., permutations $\pi$ of $\{1,\dots,p\}$, satisfying
\[\pi(1)<\dots<\pi(l) \text{ and } \pi(l+1)<\dots<\pi(p),\]
$X^{(\pi)}_j=\Ad(g_{m}\dots g_{l})X$, where $m$ is the least integers less than $l$, such that $\pi(j)<\pi(m)$,
$\iota_j$ means, that the Lie algebra element should be a tangent vector at the $j$-th copy of $G$,
and $i_\pi\from G^l\times M\to G^p\times M$ is the inclusion $x\mapsto (h_1,\dots,h_p,x)$ with \[h_j=\begin{cases}g_m&\text{if } j=\pi(m), 1\leq m\leq l\\ e\in G &\text{otherwise,}\end{cases}\] which is covered by the bundle inclusion $TM\to T(G^p\times M)$.
\end{defn}
Observe that the image of $\omega$ under $\mathcal J$ does only depend on the zero form part and, in direction of any copy of $G$, on the one form part at the identity $e\in G$.

The next Lemma -- which is mainly a citation of \cite[Lemma 2.2.2.]{Getzler}, but with signs corrected -- shows, that the map $\mathcal J$ can be interpreted as a map of double complex.
\begin{lemma}The map $\mathcal J$ respects the boundaries with the correct sign, i.e.,
 \[\mathcal J \circ \del = \left(\bar d+(-1)^p\iota\right) \circ\mathcal J\] and, after decomposing $d=d_G+d_M$ with respect to the Cartesian product $G^p\times M$
 \[\mathcal J \circ \left(\left(-1\right)^p d_M \right)= (-1)^{p'} d \circ \mathcal J \text{ and }\mathcal J \circ \left(-1^p\right)d_G = \bar\iota \circ \mathcal J,\]
where $p$ is the simplicial degree before and $p'$ the simplicial degree after application of the map $\mathcal J$,
\end{lemma}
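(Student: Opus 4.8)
The plan is to prove all three identities by direct computation: fix a differential form $\omega$, evaluate both sides of each equation at a generic tuple $(g_1,\dots,g_l\mid X)$, and reorganise the two resulting sums of shuffles until they coincide. This is the route of \cite[Lemma 2.2.2]{Getzler}; since that reference is imprecise about signs --- and since we have already had to re-sign the surrounding constructions --- the real content of the argument is the sign bookkeeping. Throughout I write $p'$ for the simplicial degree of the output component under consideration, so that a $q$-form on $G^p\times M$ contributes to it after $p-p'$ contractions in $G$-directions. The structural fact I will use repeatedly is the one recorded just after Definition \ref{def:Getzlermap}: $\mathcal J(\omega)$ depends on $\omega$ only through its $0$-form part along $M$ and, in each copy of $G$, its $1$-form part at $e$.

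For $\mathcal J\circ\del=(\bar d+(-1)^p\iota)\circ\mathcal J$ I would expand $\del=\sum_i(-1)^i\del_i^*$ and analyse $\mathcal J(\del_i^*\omega)$ face map by face map, sorting the shuffles occurring in the definition of $\mathcal J$ according to where they send the pair of indices identified by $\del_i$. The face $\del_0$ (which drops the first copy of $G$) produces the leading term $f(g_2,\dots,g_l\mid X)$ of $\bar d\mathcal J(\omega)$; the interior faces $\del_i$ (which multiply $g_ig_{i+1}$) produce, once the shuffles that put both identified indices in the retained block are grouped, the sum $\sum_i(-1)^i f(\dots,g_ig_{i+1},\dots\mid X)$; the top face $\del_{p+1}$ (which acts on $M$) produces, through the adjoint twists $X^{(\pi)}_j=\Ad(g_m\cdots g_l)X$ built into the contractions, both the final term $(-1)^l g_l\,f(g_1,\dots,g_{l-1}\mid\Ad(g_l\invers)X)$ and the Cartan term $(-1)^p\iota_{X^\sharp}\mathcal J(\omega)$. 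Collecting the shuffle signs $\sgn(\pi)$ against the $(-1)^i$ of the faces and the $(-1)^{p'}$ prefactor of $\mathcal J$ is what reproduces the signs in $\bar d$ and in $(-1)^p\iota$.

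For the identities involving $d=d_M+d_G$ I would use the splitting $T(G^p\times M)=TG^p\oplus TM$. Since $d_M$ touches only the $M$-factor, it anticommutes with each of the $p-p'$ contractions $\iota_{\pi(j)}(X^{(\pi)}_j)$ and commutes with the pullback $i_\pi^*$, so $\mathcal J(d_M\omega)=(-1)^{p-p'}d_M\mathcal J(\omega)$, which is precisely $\mathcal J\circ((-1)^p d_M)=(-1)^{p'}d\circ\mathcal J$. The operator $d_G$ manufactures a new $G$-leg one-form; by the structural fact above this survives under $\mathcal J$ only when it lies in a contracted leg, and pairing it with the corresponding contraction $\iota_{\pi(j)}$ converts the value of $f$ on that leg into $\ddt$ along $\exp(tX)$. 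Summing over which leg receives the new factor --- the signs $(-1)^i$ coming from reordering the shuffle, the twists $X_i=\Ad(g_{i+1}\cdots g_{p-1})X$ matching those packaged into the $X^{(\pi)}_j$ --- reproduces the defining formula of $\bar\iota$ term by term, giving $\mathcal J\circ((-1)^p d_G)=\bar\iota\circ\mathcal J$. In all three cases the obstacle is not the conceptual picture but the sign bookkeeping: one must pin down an explicit bijection between the two indexing sets of shuffles and check that $\sgn$ transforms under it exactly as the $(-1)^i$, $(-1)^p$ and $(-1)^{p'}$ factors demand --- the point at which \cite{Getzler} errs, and hence the one step that must be carried out in full.
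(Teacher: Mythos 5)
Your outline follows the same route as the paper, whose entire proof is the remark that one ``checks the terms type by type'' together with a citation of \cite[Lemma 2.13]{Ich1}: your attribution of the three terms of $\bar d$ to the faces $\del_0$, the interior $\del_i$ and the top face $\del_{p+1}$ (the latter also producing the Cartan term $(-1)^p\iota$ through the adjoint twists), of the factor $(-1)^{p-p'}$ to the anticommutation of $d_M$ with the $p-p'$ contractions, and of $\bar\iota$ to the pairing of the new $d_G$-leg with a contraction via the observation that $\mathcal J$ only sees $1$-form parts at $e$, is exactly that type-by-type matching. Like the paper, you stop short of executing the sign verification itself --- you explicitly defer the bijection between the two shuffle sets and the behaviour of $\sgn(\pi)$ under it, the one step you correctly identify as the actual content and which the paper likewise outsources to the external reference --- so this is a faithful plan of the same computation rather than a different argument.
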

\begin{proof}The proof is given in \cite[Lemma 2.13]{Ich1}. The idea is, to checks the terms type by type.\end{proof}

Moreover, the map $\mathcal J$ induces an isomorphism in the cohomology of the associated total complexes.
\begin{theorem}[Theorem 2.2.3. of \cite{Getzler}]
 $\mathcal J$ is a quasi-isomorphism.
\end{theorem}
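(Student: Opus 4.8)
The approach is via a spectral sequence, with $\mathcal J$ as the comparison map. Observe first that both total complexes compute $H_G^*(M,\C)$: for the source this is Dupont's proposition quoted above (the realization of $G^\bullet\times M$ is $EG\times_G M$), and for the target it follows, since $G$ is compact, by using the contraction $\int_G$ of Lemma \ref{lem:IntGroupGetzler} to identify the $\bar d$-cohomology with the Cartan complex $\Omega_G^*(M)$ (concentrated in simplicial degree zero) and then invoking the classical Cartan theorem. The content of the statement is therefore that $\mathcal J$ \emph{realizes} this common identification, and a chain map between complexes with abstractly isomorphic cohomology need not do that. To prove it, filter the source total complex, whose $(p,q)$-part is $\Omega^q(G^p\times M)$, by the form degree $q$ along $G^p\times M$, and filter the target total complex by $k+r$, the sum of the polynomial degree $k$ in $\g\dual$ and the form degree $r$ along $M$. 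Both filtrations are decreasing and, in each fixed total degree, finite, so the associated spectral sequences converge; and since the shuffle formula sends the part of $\Omega^*(G^p\times M)$ of form degree $q$ into the part of $C^*(G,S^*(\g\dual)\otimes\Omega^*(M))$ with $k+r=q$, the map $\mathcal J$ is filtration preserving. On $E_0$ the source differential reduces to the simplicial operator $\del$ and the target differential to $\bar d+(-1)^p\iota$, and the first identity of the preceding Lemma, $\mathcal J\circ\del=(\bar d+(-1)^p\iota)\circ\mathcal J$, says exactly that $\mathcal J$ induces a morphism of complexes on $E_0$.

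By the comparison theorem for spectral sequences (in the spirit of Lemma \ref{lemma:specseq}) it then suffices to check that $\mathcal J$ induces an isomorphism on $E_1$. On the target side the contraction $\int_G$ annihilates the $\bar d$-cohomology in every positive simplicial degree, so $E_1$ of the target is concentrated in simplicial degree zero and is computed from the invariant Cartan cochains $(S^*(\g\dual)\otimes\Omega^*(M))^G$, the remaining differentials reassembling the Cartan differential $d+\iota$. On the source side one has to compute $H^*(\Omega^q(G^\bullet\times M),\del)$, the cohomology of the simplicial direction of the simplicial de~Rham complex of the action groupoid (equivalently, the cohomology along the $G$-factors); when $M$ is a point this is the van~Est/Bott--Shulman isomorphism $H^*(BG,\C)\cong(S^*\g\dual)^G$, and in general it is its family version over $M$, which again produces the invariant Cartan cochains. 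One then matches these two descriptions of $E_1$ and checks that $\mathcal J$ is there the identity of the Cartan complex.

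The main obstacle is precisely this last step. Computing $H^*(\Omega^q(G^\bullet\times M),\del)$ and, above all, verifying that $\mathcal J$ realizes \emph{exactly} the classical comparison to invariant polynomials requires a term-by-term analysis of the shuffle sum defining $\mathcal J$, keeping the coadjoint twists $\Ad(g_m\cdots g_l)$ and all the signs under control; this is Getzler's computation in \cite[Section 2.2]{Getzler}, which rests on the compatibility of $\mathcal J$ with the differentials in the sign-corrected form of \cite{Ich1}. One can organize the bookkeeping as an induction on the simplicial degree, using $\int_G$ to descend from level $p$ to level $p-1$. For a general, not necessarily compact, Lie group $G$ one replaces the $\int_G$-contraction on the target by the identification of the $\bar d$-cohomology with the complex of smooth group cochains on $G$ with values in $S^*(\g\dual)\otimes\Omega^*(M)$, and the rest of the argument is unchanged.
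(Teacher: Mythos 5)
The paper does not prove this statement at all: it is recorded as a citation of Getzler's Theorem~2.2.3, with no proof environment following it. So there is no in-paper argument to compare yours against; the honest benchmark is whether your sketch is a correct, self-contained proof. It is not quite that. Your overall strategy --- filter the source by the form degree $q$, the target by $k+r$, observe that $\mathcal J$ preserves these filtrations and that the identity $\mathcal J\circ\del=(\bar d+(-1)^p\iota)\circ\mathcal J$ is exactly the $E_0$-level statement, then compare on $E_1$ --- is sound and is essentially the shape of Getzler's own argument. But at the decisive step you write that the comparison of the $E_1$-pages ``is Getzler's computation in \cite[Section 2.2]{Getzler}'', which means your proposal, like the paper, ultimately reduces to the citation rather than supplying the proof.

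Beyond that, the identification of the $E_1$-pages is misstated. With your filtration of the target by $k+r$, the contraction $\iota$ with the fundamental vector field \emph{preserves} $k+r$ and therefore belongs to $d_0$, not to the ``remaining differentials'': the $E_0$-differential is $\bar d+(-1)^p\iota$, so for compact $G$ the $E_1$-page is the $\iota$-cohomology of the weight-$j$ invariant Cartan cochains $\bigl(S^k(\g\dual)\otimes\Omega^{j-k}(M)\bigr)^G$, not the full Cartan complex with differential $d+\iota$. Correspondingly, on the source the degree-zero part of $H^*(\Omega^j(G^\bullet\times M),\del)$ is the space of \emph{basic} $j$-forms (invariant and horizontal), not the invariant forms: for $\omega\in\Omega^j(M)$ the condition $\del_0^*\omega=\del_1^*\omega$ forces $\iota_{X^\sharp}\omega=0$ as well as invariance, as one sees already for the rotation action of $S^1$ on itself, where $d\theta$ is invariant but not $\del$-closed. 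The content of the $E_1$-isomorphism is thus a Bott--Shulman/van~Est-type computation matching $H^p(\Omega^j(G^\bullet\times M),\del)$ with this $\iota$-cohomology, together with the verification that $\mathcal J$ realizes it; as stated, your two descriptions of $E_1$ do not match each other, and the paragraph asserting that both sides give ``the invariant Cartan cochains'' would not survive the example above. The claim that for non-compact $G$ ``the rest of the argument is unchanged'' also needs care, since there the differentiable group cohomology no longer vanishes and the $E_1$-pages are genuinely larger.
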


\section{The definition of equivariant differential cohomology}
The are several attempts to a definition \cite{Uribe, Gomi, Ortiz}. The most elaborated one is given by Kiyonori Gomi in \cite{Gomi}, where he defines equivariant smooth Deligne cohomology of a smooth manifold $M$ acted on by a Lie group $G$. His investigations (for $G$ a compact group) can be summarized in the following diagram with exact diagonals
\begin{equation}
\begin{tikzcd}
\faktor{\Omega^{n-1}(M,\C)^G}{\im d} \arrow{dr}{a} & & \Omega_\cl^{n}(M,\C)^G\\
&\hat H^n(M,\Z) \arrow{ru}{R}\arrow{dr}{I}\arrow{r}&H^n_G(M,\C)\\
H^{n-1}(G^*\times M,\pi^{-1}\underline{\C/\Z}) \arrow[hook]{ur}{} &&H^n_G(M,\Z). 
\end{tikzcd}
\end{equation}
The subscript $G$ stands for equivariant cohomology and the superscript $G$ for equivariant forms. Gomi defines the maps and shows that the diagonals are exact in the middle. 

From our point of view, this diagram is not satisfactory: On the one hand, one does not have the Bockstein sequence. On the other hand, closed equivariant forms is not, what one expects in the upper right corner, as there indeed exists a map 
\[\Omega_\cl^{n}(M,\C)^G\to H^n_G(M,\C).\]
But this map is in general not surjective, as not every $n$-class in equivariant cohomology is represented by a closed equivariant $n$-form: There are classes represented by (non-zero degree) polynomials $\mathfrak g\to \Omega^*(M)$. As we have seen these are related to the moment map, which plays an important role when discussing equivariant characteristic classes and forms. Thus this information is neglected in Gomis curvature map.

We will start with the case of finite groups, as there is no moment map if the group is discrete and thus everything is much more similar to the non-equivariant case. In particular, the reader who is not used to differential cohomology hopefully will have an easier access in this way. Afterwards we will discuss the definition of Gomi and show how to define a better curvature map $R$, such that one obtains a hexagon with Gomi's definition of equivariant Deligne cohomology in the middle. By this we are motivated to give another definition, which incorporates additional geometric data. The difficulty is in general not to show \emph{that} there is a hexagon, as this follows directly from the way of the definition by ideas of \cite{BNV}. What the discussion is about, is \emph{which} groups sit on the corners of the hexagon.

At the end of this section we will give some remarks on the definitions of \cite{Uribe, Ortiz} for equivariant differential cohomology.

\emph{Notice that we always work with complex valued differential forms for simplicity.} All statements also hold for real forms and real cohomology.
\subsection{The case of finite groups}
Let, in this subsection, $G$ denote a finite group. Thus a de Rham-type complex for equivariant cohomology is given by
\[\Omega^0(M)^G\overset{d}{\to}\Omega^1(M)^G\overset{d}{\to}\Omega^2(M)^G\overset{d}{\to}\dots .\]
To prepare for the case of Lie groups, we will, nevertheless, work on the simplicial manifold $G^\bullet\times M$. For definition of the Deligne complex, we will use a cone construction similar to \cite{Bunke}.

\begin{defn}
    Let $M$ be a $G$-manifold. The \emph{equivariant Deligne complex} \index{equivariant Deligne complex!finite groups} is defined as 
\[\D(n)_{G^\bullet\times M}=\Cone\left(\underline\Integer\oplus \sigma^{\geq n}\Omega^{\bullet,*}\to\Omega^{\bullet,*},(z,\omega)\mapsto \omega-z\right)[-1].\]
\symindex[d]{\D(n)_{G^\bullet\times M}}
\end{defn}
Here $\underline\Integer$ denotes the locally constant simplicial sheaf of $\Z$ and $\Omega^{\bullet,*}$ the cochain complex of simplicial sheaves of complex valued differential forms on the simplicial manifold $G^\bullet\times M$.
\begin{defn}
 Let $G$ be a finite group acting on a smooth manifold M. The \emph{$G$-equivariant differential cohomology} of $M$ is defined to be the hypercohomology 
\[\hat H^n_G(M)\defeq H^n(G^\bullet\times M, \D(n)_{G^p\times M}).\]\symindex[h]{\hat H^n_G(M)}
\end{defn}
Note that $\D(n)_{G^\bullet\times M}$ is a cochain complex of simplicial sheaves, i.e., it induces a double complex of sheaves so that a resolution is given by triple complex. We give here the general idea of how to define the cohomology of such an object. For details see Section \ref{sec:simpman}.

Denote the boundary map of $\D(n)$ by $\frakd$ and pick an injective resolution $I^{\bullet,*,*}$, i.e., for any $p\in\Natural$ there is the following double complex of sheaves on $G^p\times M$ 
\begin{equation*}
\begin{tikzcd}
\underline{\Z} \arrow{r}{\iota} \arrow{d}{\delta}&C^\infty\arrow{r}{-d} \arrow{d}{-\delta} &\Omega^1\arrow{r}{-d} \arrow{d}{\delta}&
\dots\arrow{r}{-d}&\Omega^{n-2}\arrow{r}{-d} \arrow{d}{(-1)^{n-1}\delta}&\Omega^n\oplus\Omega^{n-1}\arrow{r}{(d,\iota\oplus -d)} \arrow{d}{(-1)^{n}\delta}&\dots\\
I^{p,0,0}\arrow{r}{\frakd} \arrow{d}{\delta}&I^{p,1,0}\arrow{r}{\frakd}  \arrow{d}{-\delta} &I^{p,2,0}\arrow{r}{\frakd}  \arrow{d}{\delta}&
\dots\arrow{r}{\frakd} &I^{p,n-1,0}\arrow{r}{\frakd}  \arrow{d}{(-1)^{n-1}\delta}&I^{p,n,0}\arrow{r}{\frakd}  \arrow{d}{(-1)^{n}\delta}&\dots\\
I^{p,0,1}\arrow{r}{\frakd}  \arrow{d}{\delta}&I^{p,1,1}\arrow{r}{\frakd}  \arrow{d}{-\delta} &I^{p,2,1}\arrow{r}{\frakd}  \arrow{d}{\delta}&
\dots\arrow{r}{\frakd} &I^{p,n-1,1}\arrow{r}{\frakd}  \arrow{d}{(-1)^{n-1}\delta}&I^{p,n,1}\arrow{r}{\frakd}  \arrow{d}{(-1)^{n}\delta}&\dots\\
I^{p,0,2}\arrow{r}{\frakd}\arrow{d}  &I^{p,1,2}\arrow{r}{\frakd}\arrow{d}  &I^{p,2,2}\arrow{r}{\frakd}\arrow{d}  &
\dots\arrow{r}{\frakd} &I^{p,n-1,2}\arrow{r}{\frakd}\arrow{d}  &I^{p,n,2}\arrow{r}{\frakd}\arrow{d}  &\dots\\
\vdots&\vdots&\vdots&\vdots&\vdots&\vdots&
\end{tikzcd}
\end{equation*}
The triple complex which calculates the cohomology is 
\[K^{p,q,r}=\left(\Gamma(G^p\times M,I^{p,q,r}),\del +(-1)^p\frakd+(-1)^{p+q}\delta\right).\]

And in the same spirit as \cite{Bunke} and \cite{Gomi} one investigates differential cohomology by the following two short exact sequences
\begin{align}
\label{eq:ses1}0\to\Cone(\sigma^{\geq n}\Omega^{\bullet,*}\overset{\iota}{\to}\Omega^{\bullet,*})[-1]\overset{a}{\to}&\D(n)\overset{I}{\to} \underline\Z\to 0\\
\label{eq:ses2}0\to\Cone(\underline\Integer\overset{-\iota}{\to}\Omega^{\bullet,*})[-1]\to&\D(n)\overset{R}{\to}\sigma^{\geq n}\Omega^{\bullet,*}_\C\to 0
\end{align}
of complexes of simplicial sheaves and the exact triangle
\[
 \D(n)_{G^\bullet\times M}\to \underline\Z\oplus\sigma^{\geq n} \Omega^{\bullet,*}_\C\to \Omega^{\bullet,*}_\C\to \D(n)_{G^\bullet\times M}[1],
\]
which has the following interesting part in its long exact cohomology sequence
\begin{multline}\label{eq:les}
 H^{n-1}(G^\bullet\times M,\underline\Z)\to H^{n-1}(G^\bullet\times M,\Omega^{\bullet,*}_\C)\to H^n(G^\bullet\times M,\D(n))\\
\overset{(I,R)}{\to} H^n(G^\bullet\times M,\underline\Z)\oplus H^n(G^\bullet\times M,\sigma^{\geq n} \Omega^{\bullet,*})\overset{(-\iota,\iota)}{\to} H^n(G^\bullet\times M,\Omega^{\bullet,*}).
\end{multline}

Recall from Section \ref{sec:simpman} that 
\[H^n(G^\bullet\times M,\underline\Z)=H^n(\|G^\bullet\times M\|,\Z)=H^n_G(M,\Z)\]
and 
\[H^n(G^\bullet\times M,\Omega^{\bullet,*})=H^n_G(M,\C).\]
Moreover, we have:
\begin{lemma} 
    \[H^n(G^\bullet\times M,\sigma^{\geq n} \Omega^{\bullet,*})=\Omega^n_\cl(M)^G.\]
\end{lemma}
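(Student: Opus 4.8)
The plan is to compute the hypercohomology of the truncated complex of simplicial sheaves $\sigma^{\geq n}\Omega^{\bullet,*}$ directly, using the fact that differential forms are fine (hence acyclic) simplicial sheaves, so that hypercohomology is computed by the total complex of global sections. Explicitly, $\sigma^{\geq n}\Omega^{\bullet,*}$ is the complex of simplicial sheaves which is $\Omega^q_{M_p}$ in bidegree $(p,q)$ for $q\geq n$ and zero for $q<n$, with the convention that in the cone/truncation the object in degree $n$ is the \emph{closed} forms — more precisely the good truncation, whose degree-$n$ term is $\ker(d\colon\Omega^n\to\Omega^{n+1})$. So the claim amounts to the statement $H^n$ of the double complex $\bigl(\Omega^q(G^p\times M)\bigr)_{q\geq n}$, with the closed forms in the row $q=n$, equals the $G$-invariant closed $n$-forms on $M$. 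Here I am using throughout that $H^0$ of the vertical (simplicial) complex $\bigl(\Omega^q(G^p\times M),\del\bigr)_p$ computes, by the homotopy/integration argument already used for Getzler's complex (Lemma \ref{lem:IntGroupGetzler}) in the discrete case it is literally the classical bar resolution, and for compact $G$ the simplicial direction is acyclic in positive degree via averaging over the group, $G$-invariant $q$-forms $\Omega^q(M)^G$, while the higher cohomology in the simplicial direction vanishes.

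Concretely I would proceed in the following steps. First, identify $\sigma^{\geq n}\Omega^{\bullet,*}$ as the good truncation, so the relevant double complex $C^{p,q}$ has $C^{p,q}=\Omega^q(G^p\times M)$ for $q>n$, $C^{p,n}=\Omega^n_\cl(G^p\times M)$, and $C^{p,q}=0$ for $q<n$; one computes total cohomology in degree $n$, which by the truncation lives entirely in the bottom row and equals $\ker\bigl(\del\colon\Omega^n_\cl(M_0)\to\Omega^n_\cl(M_1)\bigr)$ together with no contribution from $d$ since there is no row below. Second, run the spectral sequence of the double complex filtering by $q$: take vertical cohomology first, i.e.\ the simplicial cohomology $H^p\bigl(\Omega^q(G^\bullet\times M),\del\bigr)$. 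Since each $\Omega^q_{M_p}$ is a fine sheaf and (for the simplicial manifold $G^\bullet\times M$ with $G$ compact) the simplicial cohomology of the constant-in-$q$ system collapses, the standard averaging homotopy shows $H^p=0$ for $p>0$ and $H^0=\Omega^q(M)^G$. Third, with the $E_1$-page concentrated in the column $p=0$ and looking like $\cdots\to 0\to\Omega^n_\cl(M)^G\to\Omega^{n+1}(M)^G\to\cdots$, the spectral sequence degenerates and the total cohomology in degree $n$ is exactly the kernel of $d\colon\Omega^n_\cl(M)^G\to\Omega^{n+1}(M)^G$, which is $\Omega^n_\cl(M)^G$ — the desired answer.

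The subtle point, and the one I would present most carefully, is the exactness of the simplicial (vertical) direction in positive degrees, i.e.\ that $H^p\bigl(\Omega^q(G^\bullet\times M),\del\bigr)=0$ for $p\geq 1$ with $H^0=\Omega^q(M)^G$. For a \emph{finite} group this is literally the statement that $\Omega^q(G^\bullet\times M)$ with the bar differential is the standard (normalized) bar resolution of $\Omega^q(M)$ as a $\Z G$-module, which is exact — or one simply inserts the contracting homotopy $\omega\mapsto(g_1,\dots,g_{p-1},x)\mapsto\frac1{|G|}\sum_{g}\omega(g,g_1,\dots,g_{p-1},x)$ exactly as in Lemma \ref{lem:IntGroupGetzler}. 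Since this subsection restricts to finite $G$, this homotopy argument is all that is needed; for compact $G$ one replaces the normalized sum by integration against the right-invariant probability measure, verbatim as in Lemma \ref{lem:IntGroupGetzler}. Care is needed only with the sign bookkeeping $(-1)^q\del$ in the double-complex differential and with the boundary convention on the truncated degree-$n$ row, but these do not affect the final identification. I expect no real obstacle beyond this; the one thing to double-check is that the good truncation (closed forms in degree $n$), rather than the stupid truncation, is the one intended by $\sigma^{\geq n}$, which is forced by the requirement that the map $R$ in \eqref{eq:ses2} land in \emph{closed} equivariant forms.
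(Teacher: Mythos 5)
Your proposal is correct, and its first step is in substance the paper's entire proof: since $\sigma^{\geq n}\Omega^{\bullet,*}$ has nothing in total degree $n-1$ and only the single corner term at bidegree $(0,n)$ in total degree $n$, the cohomology group $H^n$ consists exactly of the $n$-forms on $M$ killed by both $d$ and $\del=\del_0^*-\del_1^*$, i.e.\ the closed $G$-invariant $n$-forms; no coboundaries can occur. The paper stops there. Your subsequent spectral-sequence recomputation (collapsing the simplicial direction first via the averaging contraction of Lemma \ref{lem:IntGroup}) is valid for finite or compact $G$, but it is not needed for this particular corner: the degree count already forces the answer, and it does so for arbitrary $G$, whereas the averaging argument genuinely enters only in the paper's computation of the \emph{other} corner, $H^{n-1}(G^\bullet\times M,\sigma^{<n}\Omega^{\bullet,*})$. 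One small correction: $\sigma^{\geq n}$ here is the stupid truncation, not the good one — the paper's proof starts from all of $\Omega^n(M)$ and imposes $d\omega=0$ as part of the cocycle condition, rather than building closedness into the degree-$n$ term. Your worry that the good truncation is ``forced'' by $R$ in \eqref{eq:ses2} landing in closed forms is unfounded, because with the stupid truncation the closedness is recovered automatically upon passing to $H^n$; in total degree $n$ both conventions give the same group, so this does not affect your conclusion.
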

\begin{proof}
\begin{align*}
H^n(G^\bullet\times M,\sigma^{\geq n} \Omega^{\bullet,*})&=\{\omega\in \Omega^n(M)| \del \omega=0, d\omega=0\} \\
&=\{\omega\in \Omega^n(M)|  \del_0^*\omega=\del_1^*\omega, d\omega=0\} \\
&=\Omega^n_\cl(M)^G,
\end{align*}
as $\del_0\from G\times M\to M$ is the projection to $M$ and $\del_1$ is the action of $G$ on $M$. Thus we obtain closed equivariant differential $n$-forms.
\end{proof}

As $\Omega^{\bullet,*}$ is a resolution of $\underline\C^\delta$, the locally constant sheaf of continuous maps to the field $\C$ with discrete topology, we get a quasi isomorphism 
\[\Cone(\underline\Integer\overset{-\iota}{\to}\Omega^{\bullet,*})\simeq \Cone(\underline\Integer\overset{-\iota}{\to}\underline\Complex^\delta)\simeq \underline{\faktor{\Complex^\delta}{\Integer}}.\]

The long exact cohomology sequence of the exact triangle 
\[\Cone(\underline\Integer\overset{-\iota}{\to}\underline\Complex^\delta)[-1]\to \underline\Z\overset{-\iota}{\to} \underline\C^\delta\to \Cone(\underline\Integer\overset{-\iota}{\to}\underline\Complex^\delta)\]
is the Bockstein sequence, up to a minus sign. Comparing this with \eqref{eq:les} via the inclusion of \eqref{eq:ses1}, results in the follwing commutative diagram:
\[\begin{scaledcd}{.9}
 H^{n-1}_G(M,\Z)\arrow[equal]{d}\arrow{r}& H^{n-1}_G(M,\C)\arrow[equal]{d}\arrow{r}& H^{n-1}_G(M,\C/\Z) \arrow{d}\arrow{r}{-\beta}&H^n_G(M,\Z)\arrow{d}{\id\oplus 0}\arrow{r}{-\iota}& H^n_G(M,\C)\arrow[equal]{d}\\
 H^{n-1}_G(M,\Z)\arrow{r}& H^{n-1}_G(M,\C)\arrow{r}& \hat H^n_G(M,\Z)\arrow{r}{I\oplus R}&H^n_G(M,\Z)\oplus  \Omega^*_{\cl}(M)^G\arrow{r}{(-\iota,\iota)}& H^n_G(M,\C).
\end{scaledcd}\]
For the next steps, recall that one can define, similar to Lemma \ref{lem:IntGroupGetzler}, a vertical contraction of the double complex $\Omega^{\bullet,*}(G^\bullet\times M)$.
\begin{lemma}\label{lem:IntGroup}`Integration' over the group, defines a map 
\begin{align}
\int_G:\Omega_{\C}^{i}(G^k\times M)&\to \Omega_{\C}^{i}(G^{k-1}\times M) \label{eq:IntGroup}\\
\left(\int_G \omega\right)(g_1,\dots,g_{k-1},m)&=(-1)^i\frac{1}{|G|}\sum_G \omega(g,g_1,\dots,g_{k-1},m)\nonumber
\end{align}
such that $\del \int_G(\omega)=\omega$ if $\del \omega=0$.
\end{lemma}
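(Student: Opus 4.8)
The plan is to treat this as the discrete-group incarnation of Lemma \ref{lem:IntGroupGetzler}. For a finite group one has $\g=0$, so the Cartan factor $S^*(\g\dual)$ collapses to $\C$ and $C^k(G,S^0(\g\dual)\otimes\Omega^*(M))=C^k(G,\Omega^*(M))=\Omega^{k,*}(G^\bullet\times M)$, and under this identification $\bar d$ is the simplicial differential $\del$; thus one route is simply to quote Lemma \ref{lem:IntGroupGetzler}. I would nonetheless also give the short self-contained argument, since the explicit face maps of $G^\bullet\times M$ make everything transparent.

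First I would record well-definedness: for a finite (hence discrete) group, $G^k\times M$ is the disjoint union of $|G|^k$ copies of $M$, so a complex $i$-form on it is nothing but a family $\{\omega(g_1,\dots,g_k,\cdot)\}$ of $i$-forms on $M$ indexed by $G^k$; the prescription produces such a family indexed by $G^{k-1}$, no smoothness has to be checked, and $\C$-linearity is immediate. The sign $(-1)^i$ is present so that $\int_G$ contracts the \emph{signed} vertical differential $(-1)^q\del$ of the double complex $(\Omega^{p,q}(G^\bullet\times M),d+(-1)^q\del)$.

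The substance is then a direct computation, entirely parallel to the proof of Lemma \ref{lem:IntGroupGetzler}. Expand $\del\bigl(\int_G\omega\bigr)(g_1,\dots,g_k,m)=\sum_{j=0}^{k}(-1)^j\bigl(\int_G\omega\bigr)\bigl(\del_j(g_1,\dots,g_k,m)\bigr)$ using the explicit face maps $\del_j\from G^k\times M\to G^{k-1}\times M$ of Example \ref{ex:actionmanifold}. On the other hand, evaluate the hypothesis $\del\omega=0$ at the point $(g,g_1,\dots,g_k,m)\in G^{k+1}\times M$ and average over $g$ with respect to the normalized counting measure $\frac1{|G|}\sum_{g}$. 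The terms of $\del\omega$ coming from $\del_2,\dots,\del_{k+1}$ reindex, after the shift $j\mapsto j-1$, exactly onto the terms of $\del\int_G\omega$ indexed by $j=1,\dots,k$; the $\del_0$-term yields $\omega(g_1,\dots,g_k,m)$ itself; and the $\del_1$-term, namely $\frac1{|G|}\sum_g\omega(gg_1,g_2,\dots,g_k,m)$, is rewritten by translation invariance of the counting measure, $\frac1{|G|}\sum_g h(gg_1)=\frac1{|G|}\sum_{g}h(g)$, as $\frac1{|G|}\sum_g\omega(g,g_2,\dots,g_k,m)$, matching the $j=0$ term of $\del\int_G\omega$. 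Collecting the signs gives $\del\int_G\omega=\omega$ whenever $\del\omega=0$.

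There is no genuine obstacle here; the only point requiring care — exactly as in Lemma \ref{lem:IntGroupGetzler} — is the bookkeeping of the alternating signs and of the index shift, together with the single use of translation invariance of the measure to absorb the $\del_1$-term. I would present the calculation in the same telescoped display format used there.
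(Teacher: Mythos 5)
Your proposal is correct and is essentially the paper's own argument: the paper's proof consists of the single remark that the claim follows by the same calculation as Lemma \ref{lem:IntGroupGetzler}, with the Haar integral replaced by the normalized sum, which is exactly the computation you carry out. Your added observations (the identification $C^k(G,\Omega^*(M))=\Omega^{k,*}(G^\bullet\times M)$ for discrete $G$ and the explicit use of translation invariance of the counting measure to absorb the $\del_1$-term) only make explicit what the paper leaves implicit.
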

\begin{proof}
 The proof is given by the same calculation as for Lemma \ref{lem:IntGroupGetzler}.
\end{proof}

One has $\Cone(\sigma^{\geq n}\Omega^{\bullet,*}\overset{\iota}{\to}\Omega^{\bullet,*})[-1]\simeq \sigma^{<n}\Omega^{\bullet,*}[-1]$. Thereby, the map in degree $n$, is given from right to left by $\eta\mapsto (d\eta,\eta)$. As differential forms form a fine sheaf, they are themselves their injective resolution. Thus the cohomology group $H^{n-1}(G^\bullet\times M;\Omega^{<n}_\C)$ is given by the hypercohomology of the double complex of vector spaces:
\begin{equation*}
\begin{tikzcd}
\Omega^0(M)\arrow{r}{-d} \arrow{d}{\del} &\dots\arrow{r}{-d}&\Omega^{n-2}(M)\arrow{r}{-d} \arrow{d}{\del}&\Omega^{n-1}(M)\arrow{r} \arrow{d}{\del}&0\\
\Omega^0(G\times M)\arrow{r}{d} \arrow{d}{\del} &\dots\arrow{r}{d}&\Omega^{n-2}(G\times M)\arrow{r}{d} \arrow{d}{\del}&\Omega^{n-1}(G\times M)\arrow{r} \arrow{d}{\del}&0\\
\Omega^0(G^2\times M)\arrow{r}{-d} \arrow{d}{\del} &\dots\arrow{r}{-d}&\Omega^{n-2}(G^2\times M)\arrow{r}{-d} \arrow{d}{\del}&\Omega^{n-1}(G^2\times M)\arrow{r} \arrow{d}{\del}&0\\
\vdots&&\vdots&\vdots&
\end{tikzcd}
\end{equation*}
The contraction of \autoref{lem:IntGroup} implies by the little spectral sequence argument recalled in \autoref{lemma:specseq}, that the inclusion of
\[\Omega^0(M)^G\overset{-d}{\to}\dots\overset{-d}{\to}\Omega^{n-2}(M)^G\overset{-d}{\to}\Omega^{n-1}(M)^G\to 0\]
as first row into the double complex induces an isomorphism in cohomology.
 
Thus we obtain
\[ H^{n-1}(G^\bullet\times M,\Omega^{<n})=\faktor{\left(\Omega^{n-1}(M)\right)^G}{d\left(\Omega^{n-2}(M)^G\right)}\,,\]
and from \eqref{eq:ses1} an induced map 
\[a\colon\faktor{(\Omega^{n-1}(M))^G}{d\left(\Omega^{n-2}(M)^G\right)}\to H^n(G^\bullet\times M,\D(n)).\]
\begin{lemma}
 $R\circ a=d\colon \faktor{(\Omega^{n-1}(M))^G}{d\left(\Omega^{n-2}(M)^G\right)}\to \Omega_{\cl}^{n}(M)^G$
\end{lemma}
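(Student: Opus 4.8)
The plan is to compute the composite $R\circ a$ directly on complexes of simplicial sheaves and then to identify the induced map on hypercohomology of $G^\bullet\times M$. Writing out $\D(n)_{G^\bullet\times M}=\Cone\bigl(\underline\Z\oplus\sigma^{\geq n}\Omega^{\bullet,*}\to\Omega^{\bullet,*},\,(z,\omega)\mapsto\omega-z\bigr)[-1]$, one sees that the map $a$ of \eqref{eq:ses1} is the termwise-split inclusion of $\Cone(\sigma^{\geq n}\Omega^{\bullet,*}\overset{\iota}{\to}\Omega^{\bullet,*})[-1]$ as the subobject ``$z=0$'', while the map $R$ of \eqref{eq:ses2} is the termwise-split projection of $\D(n)$ onto the summand $\sigma^{\geq n}\Omega^{\bullet,*}$. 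Hence $R\circ a$ is simply the cone projection $\Cone(\sigma^{\geq n}\Omega^{\bullet,*}\overset{\iota}{\to}\Omega^{\bullet,*})[-1]\to\sigma^{\geq n}\Omega^{\bullet,*}$, $(\omega,\rho)\mapsto\omega$; in particular it is already a genuine map of complexes of simplicial sheaves, so on cohomology it is the map induced by this chain map.

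Next I precompose with the quasi-isomorphism $\sigma^{<n}\Omega^{\bullet,*}[-1]\xrightarrow{\ \sim\ }\Cone(\sigma^{\geq n}\Omega^{\bullet,*}\overset{\iota}{\to}\Omega^{\bullet,*})[-1]$ used above to compute the left-hand term of \eqref{eq:ses1} — the one which in total degree $n$ is $\eta\mapsto(d\eta,\eta)$. Composing with the cone projection, the resulting map of complexes $\sigma^{<n}\Omega^{\bullet,*}[-1]\to\sigma^{\geq n}\Omega^{\bullet,*}$ is, in total degree $n$, just $\eta\mapsto d\eta$; since $G$ is finite, the cohomology classes in play are represented by forms on $M$ sitting in simplicial degree $0$, so there are no simplicial correction terms to carry along. (Equivalently, $R\circ a$ is, up to the conventional sign, the connecting homomorphism of the stupid-truncation short exact sequence $0\to\sigma^{\geq n}\Omega^{\bullet,*}\overset{\iota}{\to}\Omega^{\bullet,*}\to\sigma^{<n}\Omega^{\bullet,*}\to0$, which on a representative is computed by applying the total differential of $\Omega^{\bullet,*}$.)

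Finally I pass to cohomology and match the identifications already established. On the source, $H^n\bigl(G^\bullet\times M,\sigma^{<n}\Omega^{\bullet,*}[-1]\bigr)=H^{n-1}\bigl(G^\bullet\times M,\sigma^{<n}\Omega^{\bullet,*}\bigr)=\faktor{(\Omega^{n-1}(M))^G}{d(\Omega^{n-2}(M)^G)}$ via the inclusion of the first row $(\Omega^{<n}(M)^G,d)$ (using Lemma \ref{lem:IntGroup} and Lemma \ref{lemma:specseq}), so a class is represented by a $G$-invariant $(n-1)$-form $\eta$ in simplicial degree $0$; on the target, $H^n\bigl(G^\bullet\times M,\sigma^{\geq n}\Omega^{\bullet,*}\bigr)=\Omega^n_{\cl}(M)^G$, a class being represented by a closed $G$-invariant $n$-form in simplicial degree $0$. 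On such level-$0$ representatives the chain map above acts by the exterior derivative $\eta\mapsto d\eta$ of $M$, and $d\eta$ is again closed and $G$-invariant; hence $R\circ a$ induces precisely the well-defined homomorphism $d\colon\faktor{(\Omega^{n-1}(M))^G}{d(\Omega^{n-2}(M)^G)}\to\Omega^n_{\cl}(M)^G$, as claimed. The one place requiring care is the sign bookkeeping — verifying that the cone/shift conventions together with the normalization ``$\eta\mapsto(d\eta,\eta)$'' of the comparison quasi-isomorphism yield $+d$ and not $-d$ — but this is forced once the conventions of \eqref{eq:ses1} and \eqref{eq:ses2} are fixed, so it is a routine check rather than a real obstacle.
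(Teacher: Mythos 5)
Your argument is correct and is essentially the paper's own proof: the authors likewise observe that the quasi-isomorphism $\sigma^{<n}\Omega^{\bullet,*}[-1]\simeq\Cone(\sigma^{\geq n}\Omega^{\bullet,*}\overset{\iota}{\to}\Omega^{\bullet,*})[-1]$ is given in degree $n$ by $\eta\mapsto(d\eta,\eta)$ and that $R$ is the projection onto the $\sigma^{\geq n}\Omega^{\bullet,*}$ summand, so $R(a(\eta))=d\eta$. Your version just spells out the identification of the source and target hypercohomology groups via level-$0$ representatives in somewhat more detail.
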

\begin{proof}
 Let $\eta\in \faktor{(\Omega^{n-1}(M))^G}{d\left(\Omega^{n-2}(M)^G\right)}$. Recall that the quasi-isomorphism $ \sigma^{<n}\Omega^{\bullet,*}[-1]\simeq\Cone(\sigma^{\geq n}\Omega^{\bullet,*}\overset{\iota}{\to}\Omega^{\bullet,*})[-1]$ is in degree $n$ given by $\eta\mapsto(d\eta,\eta)$. Thus one calculates
\[R(a(\eta))=R(d\eta,\eta)=d\eta,\]
since $R$ comes from the projection to the first summand.
\end{proof}

Collecting these statements, we have proven the following theorem.
\begin{theorem}\label{thm:finitehexagon}Let $G$ be a finite group acting on the smooth manifold $M$, then there is a commutative diagram
\[ 
\begin{tikzcd}[column sep=small]
\phantom{.} &\hspace{-4em}\faktor{\Omega^{n-1}(M)^G}{d\left(\Omega^{n-2}(M)^G\right)} \arrow{dr}{a} \arrow{rr}{d}& & \Omega_{\cl}^{n}(M)^G\arrow{dr}{}&\\
H_G^{n-1}(M,\C)\arrow{ru}{}\arrow{dr}{}&&\hat H_G^n(M,\Z)\arrow{ru}{R}\arrow[two heads]{dr}{I}&&\hspace{-1em}H_G^n(M,\C)\\
&\hspace{-3em}H_G^{n-1}(M,\C/\Z) \arrow[hook]{ur}{} \arrow{rr}{-\beta}&&H_G^n(M,\Z) \arrow{ur}{}&
\end{tikzcd}
\]
whose top line, bottom line and diagonals are exact. 
\end{theorem}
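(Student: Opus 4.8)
The plan is to read the hexagon off from the long exact sequences attached to the two short exact sequences \eqref{eq:ses1} and \eqref{eq:ses2} of complexes of simplicial sheaves and to the exact triangle behind \eqref{eq:les}, after substituting for every occurring hypercohomology group the concrete description obtained above. So I would first fix the dictionary: $\hat H^n_G(M,\Z)=H^n(G^\bullet\times M,\D(n))$ occupies the center; $H^n(G^\bullet\times M,\underline\Z)=H^n_G(M,\Z)$ and $H^n(G^\bullet\times M,\Omega^{\bullet,*})=H^n_G(M,\C)$, the latter being moreover $H^n_{\dR}(M)^G$ since the contraction of Lemma~\ref{lem:IntGroup} and the spectral sequence comparison of Lemma~\ref{lemma:specseq} collapse the double complex $\Omega^q(G^p\times M)$ onto its first row $\Omega^q(M)^G$; $H^n(G^\bullet\times M,\sigma^{\geq n}\Omega^{\bullet,*})=\Omega^n_\cl(M)^G$, while $H^{n-1}$ of this complex vanishes, being concentrated in form degrees $\ge n$; and, via the quasi-isomorphisms $\Cone(\sigma^{\geq n}\Omega^{\bullet,*}\xrightarrow{\iota}\Omega^{\bullet,*})[-1]\simeq\sigma^{<n}\Omega^{\bullet,*}[-1]$ and $\Cone(\underline\Z\xrightarrow{-\iota}\Omega^{\bullet,*})[-1]\simeq\underline{\C/\Z}[-1]$ together with the same collapsing, $H^n$ of the cone in \eqref{eq:ses1} is $\Omega^{n-1}(M)^G/d(\Omega^{n-2}(M)^G)$ and $H^n$ of the cone in \eqref{eq:ses2} is $H^{n-1}_G(M,\C/\Z)$.

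With the dictionary in place, the bottom line \emph{is} the Bockstein sequence of $M$: this is precisely the content of the commutative comparison diagram above, which matches \eqref{eq:les} with that Bockstein sequence and in particular identifies the relevant connecting map with $-\beta$. For the top line I would argue directly on invariant forms: the kernel of $d\colon\Omega^{n-1}(M)^G/d(\Omega^{n-2}(M)^G)\to\Omega^n_\cl(M)^G$ consists of the closed invariant $(n-1)$-forms modulo the invariant exact ones, and averaging over the finite group shows the latter equals $d(\Omega^{n-2}(M)^G)$, so this kernel is $H^{n-1}_{\dR}(M)^G=H^{n-1}_G(M,\C)$; its cokernel is $\Omega^n_\cl(M)^G/d(\Omega^{n-1}(M)^G)=H^n_{\dR}(M)^G=H^n_G(M,\C)$. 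The triangle $d=R\circ a$ commutes by the lemma proved just before the theorem, so this is indeed the line running across the top.

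The two diagonals amount to exactness of $\Omega^{n-1}(M)^G/d(\Omega^{n-2}(M)^G)\xrightarrow{a}\hat H^n_G(M,\Z)\xrightarrow{I}H^n_G(M,\Z)$ and of $H^{n-1}_G(M,\C/\Z)\hookrightarrow\hat H^n_G(M,\Z)\xrightarrow{R}\Omega^n_\cl(M)^G$ at their middle terms, and both are immediate from the long exact sequences of \eqref{eq:ses1} and of \eqref{eq:ses2} respectively. Surjectivity of $I$ follows from the vanishing of $H^{n+1}$ of the cone in \eqref{eq:ses1}, i.e.\ of $H^n(G^\bullet\times M,\sigma^{<n}\Omega^{\bullet,*})$, that truncated complex living in form degrees $\le n-1$; injectivity of the map from $H^{n-1}_G(M,\C/\Z)$ follows from the vanishing of $H^{n-1}(G^\bullet\times M,\sigma^{\geq n}\Omega^{\bullet,*})$ noted above. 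Commutativity of the remaining faces is automatic: every arrow is induced either by a morphism of complexes of simplicial sheaves --- hence by functoriality of hypercohomology --- or is a connecting homomorphism, and the evident inclusions and projections relating $\D(n)$ to $\underline\Z$, $\sigma^{\geq n}\Omega^{\bullet,*}$ and the two cones are compatible by construction.

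All the genuine input has already been supplied in the preceding lemmas --- the collapsing of the de Rham double complex, the equality $H^n(\sigma^{\geq n}\Omega^{\bullet,*})=\Omega^n_\cl(M)^G$, the identity $R\circ a=d$, and the comparison with the Bockstein sequence --- so what remains, and the only point I expect to require care, is the bookkeeping of degree shifts in the cone and truncation functors: one must check, for instance, that $H^n$ of $\Cone(\sigma^{\geq n}\Omega^{\bullet,*}\xrightarrow{\iota}\Omega^{\bullet,*})[-1]$ really lands in form degree $n-1$, and that the connecting homomorphism of \eqref{eq:ses1} is literally $a$ whereas that of \eqref{eq:ses2} is $-\beta$ under the identifications. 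Once these are matched up, collecting the statements assembles the diagram.
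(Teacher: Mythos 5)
Your proposal is correct and follows essentially the same route as the paper: the hexagon is assembled from the long exact sequences of \eqref{eq:ses1} and \eqref{eq:ses2} and the exact triangle behind \eqref{eq:les}, after identifying the corner groups via the contraction of Lemma \ref{lem:IntGroup}, the computation of $H^*(G^\bullet\times M,\sigma^{\geq n}\Omega^{\bullet,*})$, the Bockstein comparison, and the identity $R\circ a=d$. Your direct averaging argument for exactness of the top line is just a restatement of the same collapsing mechanism, so nothing is missing.
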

\begin{remark}
 If $G$ is the trivial group this is exactly the hexagon of the non-equivariant case.
\end{remark}

\begin{prop}
 If $G$ acts freely on $M$, then \[\hat H_G^n(M,\Z) =\hat H^n\left(\faktor{M}{G},\Z\right)\]
\end{prop}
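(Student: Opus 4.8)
The plan is to compare the equivariant Deligne complex $\D(n)_{G^\bullet\times M}$ on the simplicial manifold $G^\bullet\times M$ with the ordinary Deligne complex $\D(n)_{M/G}$ on the quotient manifold $M/G$, using that for a free action the quotient map $M\to M/G$ realizes $G^\bullet\times M$ as a simplicial resolution of $M/G$. Concretely, when $G$ is finite and acts freely, $p\mapsto G^p\times M$ together with the face maps is, at the level of geometric realizations, a model for $M/G = EG\times_G M$; more is true, namely on the level of sheaf cohomology each of the building-block simplicial sheaves pulls back from $M/G$. So the first step is to record that $H^*(G^\bullet\times M,\underline\Z)=H^*(M/G,\Z)$, $H^*(G^\bullet\times M,\Omega^{\bullet,*})=H^*(M/G,\C)$ (which is already in the excerpt), and, crucially, that these identifications are compatible with the de Rham map $\underline\Z\to\Omega^{\bullet,*}$ and with the truncation $\sigma^{\geq n}$.

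Next I would exploit the cone description directly. Since $\D(n)$ is built functorially as a mapping cone out of $\underline\Z$, $\sigma^{\geq n}\Omega^{\bullet,*}$ and $\Omega^{\bullet,*}$, it suffices to produce a quasi-isomorphism of cochain complexes of (simplicial) sheaves compatible with the three maps involved. The key input is that for a free finite action the invariants computation already used in the proof of Theorem \ref{thm:finitehexagon} becomes an honest descent statement: the integration-over-the-group contraction of \autoref{lem:IntGroup} shows that the first row (forms on $M_0=M$, but now the $G$-invariant ones, which by freeness are exactly forms on $M/G$) computes the whole vertical cohomology. Hence $H^*(G^\bullet\times M,\sigma^{\geq n}\Omega^{\bullet,*})$ identifies with the truncated de Rham complex of $M/G$, and likewise $H^{n-1}(G^\bullet\times M,\Omega^{<n})=\Omega^{n-1}(M/G)/d\Omega^{n-2}(M/G)$.

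Assembling these, I would run the hexagon long exact sequences \eqref{eq:ses1}, \eqref{eq:ses2} and \eqref{eq:les} simultaneously for $G^\bullet\times M$ and for $M/G$: the five-lemma applied to the long exact sequence \eqref{eq:les} (whose outer terms $H^{n-1}_G(M,\Z)$, $H^{n-1}_G(M,\C)$, $\Omega^n_\cl(M)^G$, $H^n_G(M,\Z)$ all match the corresponding quotient groups under the identifications above, compatibly with the maps) forces $\hat H^n_G(M,\Z)\to\hat H^n(M/G,\Z)$ to be an isomorphism. The main obstacle is the one genuinely simplicial point: showing that the comparison maps between the two double/triple complexes are compatible on the nose with all the structure maps of the cone — in particular that the contraction $\int_G$ descends to the identity on the quotient and intertwines $d$, $\del$ and the truncation — and that the spectral sequence argument of \autoref{lemma:specseq} applies uniformly to each of $\underline\Z$, $\Omega^{\bullet,*}$ and $\sigma^{\geq n}\Omega^{\bullet,*}$ so that the comparison is a quasi-isomorphism of the full cones rather than merely an isomorphism on each corner of the hexagon. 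Once that bookkeeping is in place, the statement follows.
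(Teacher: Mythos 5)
Your proposal is correct and follows essentially the same route as the paper: the quotient map induces a simplicial map $G^\bullet\times M\to\{e\}^\bullet\times(M/G)$, the outer terms of the long exact sequence \eqref{eq:les} for the two sides are identified (cohomology via the homotopy equivalence of realizations, forms via descent of invariant forms along the covering $M\to M/G$), and the five lemma closes the argument. The descent/contraction bookkeeping you flag as the "main obstacle" is exactly what the paper's comparison of the two exact lines encapsulates.
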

\begin{proof}
 Let $Q$ denote the quotient manifold, $q\from M\to Q$ the quotient map and $\{e\}$ the trivial group. The simplicial manifold $\{e\}^\bullet\times Q$ equals $Q$ in any level and all face and degeneracy maps are the identity. This implies that 
\[\hat H^n_{\{e\}}(Q,\Z)=H^n(\{e\}^\bullet\times Q,\D(n))=\hat H^n(Q,\Z),\]
since $\del$ alternately equals $\id$ or the zero map.

Moreover, $q$ and $G\to \{e\}$ induce a smooth simplicial map $G^\bullet\times M\to \{e\}^\bullet\times Q$, whose geometric realization is a fattened, homotopy equivalent, version of $EG\times_G M\to Q$. This map induces homomorphism between the exact lines 
\[\begin{scaledcd}{.9}
 H^{n-1}(Q,\Z)\arrow{d}\arrow{r}& H^{n-1}(Q,\C)\arrow{d}\arrow{r}& \hat H^n(Q,\Z) \arrow{d}\arrow{r}&H^n(Q,\Z)\oplus \Omega^n_{\cl}(Q)\arrow{d}\arrow{r}& H^n(Q,\C)\arrow{d}\\
 H^{n-1}_G(M,\Z)\arrow{r}& H^{n-1}_G(M,\C)\arrow{r}& \hat H^n_G(M,\Z)\arrow{r}&H^n_G(M,\Z)\oplus  \Omega^n_\cl(M)^G\arrow{r}& H^n_G(M,\C)
\end{scaledcd}\]
As the two maps on the left and the right-hand side are isomorphisms, the same is true in the middle by the five lemma.
\end{proof}

How about homotopy invariance in equivariant differential cohomology? It is the same as in the non-equivariant case: It is not, but one can measure the deviation from being so.
\begin{prop}
Let $i_t \colon M \to  [0,1]\times M$ be the inclusion at $t\in [0,1]$, let $G$ act trivially on the interval and let $\hat x\in \hat H_G^n([0,1]\times M,\Z)$, then 
\[i_1^*\hat x-i_0^*\hat x=a\left(\int_{[0,1]\times M/M} R(\hat x)\right).\]
\end{prop}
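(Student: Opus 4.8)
The plan is to deduce the formula from the exactness of the hexagon in Theorem~\ref{thm:finitehexagon} together with the ordinary Stokes formula for fibre integration over $[0,1]$.

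\textbf{Step 1.} The maps $i_0,i_1\colon M\to[0,1]\times M$ are $G$-equivariantly homotopic through the $i_s$, $s\in[0,1]$, because $G$ acts trivially on the interval; hence they induce the same map on $H_G^{*}(-,\Z)=H^{*}(\|G^\bullet\times(-)\|,\Z)$ by homotopy invariance. By functoriality of the cone construction the pullback $i_t^{*}$ is defined on $\hat H_G^{*}$ and commutes with $I$ and $R$. Therefore $I(i_1^{*}\hat x-i_0^{*}\hat x)=i_1^{*}I(\hat x)-i_0^{*}I(\hat x)=0$, and since the diagonal $H_G^{n-1}(M,\C)\to\faktor{\Omega^{n-1}(M)^G}{d\Omega^{n-2}(M)^G}\stackrel{a}{\to}\hat H_G^{n}(M,\Z)\stackrel{I}{\to}H_G^{n}(M,\Z)$ is exact, there is $\eta\in\faktor{\Omega^{n-1}(M)^G}{d\Omega^{n-2}(M)^G}$ with $a(\eta)=i_1^{*}\hat x-i_0^{*}\hat x$.

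\textbf{Step 2.} Next I would identify $\eta$ up to $\ker a$ by applying the curvature map. On the one hand $R\circ a=d$ by the Lemma above, so $R(a(\eta))=d\eta$. On the other hand $R(\hat x)\in\Omega^{n}_{\cl}([0,1]\times M)^G$, and, with the sign convention fixed so that Stokes for the fibre $[0,1]$ reads $i_1^{*}\omega-i_0^{*}\omega=d\int_{[0,1]\times M/M}\omega+\int_{[0,1]\times M/M}d\omega$, the closedness of $R(\hat x)$ gives $i_1^{*}R(\hat x)-i_0^{*}R(\hat x)=d\int_{[0,1]\times M/M}R(\hat x)$. Since $R$ commutes with the $i_t^{*}$, the left-hand side equals $R(i_1^{*}\hat x-i_0^{*}\hat x)=R(a(\eta))=d\eta$, so $d\bigl(\eta-\int_{[0,1]\times M/M}R(\hat x)\bigr)=0$ in $\Omega^{n}_{\cl}(M)^G$.

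\textbf{Step 3.} I conclude using exactness once more. For finite $G$ the map $H_G^{n-1}(M,\C)\to\faktor{\Omega^{n-1}(M)^G}{d\Omega^{n-2}(M)^G}$ is the inclusion of closed $G$-invariant forms modulo exact ones (invariant forms compute $H_G^{*}(M,\C)$), and by exactness of the same diagonal at its top-left vertex this image equals $\ker a$. Since $\eta-\int_{[0,1]\times M/M}R(\hat x)$ is closed, it lies in that image, so $a\bigl(\eta-\int_{[0,1]\times M/M}R(\hat x)\bigr)=0$, i.e. $i_1^{*}\hat x-i_0^{*}\hat x=a(\eta)=a\bigl(\int_{[0,1]\times M/M}R(\hat x)\bigr)$.

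\textbf{Main obstacle.} The soft bookkeeping of Steps~1--2 is where care is needed: one must verify that $i_t^{*}$ is well defined on $\hat H_G^{n}$ and strictly compatible with $R$ and $I$ at the level of the triple complex $K^{p,q,r}$, and one must pin down the orientation and sign conventions in $\int_{[0,1]\times M/M}$ so that Stokes yields $d\int_{[0,1]\times M/M}$ with exactly the sign matching the normalization $R\circ a=d$. A more computational alternative is to lift $\hat x$ to a cocycle in $K^{p,q,r}$, construct fibre integration on that complex, and exhibit the homotopy defect explicitly, but routing through the hexagon as above sidesteps those calculations.
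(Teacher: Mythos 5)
Your Steps 1 and 2 are fine, but Step 3 contains a genuine gap that the argument cannot recover from. The hexagon of Theorem \ref{thm:finitehexagon} does \emph{not} assert that the diagonal through $a$ is exact at the corner $\faktor{\Omega^{n-1}(M)^G}{d(\Omega^{n-2}(M)^G)}$; the exactness holding at that corner is the one along the top line, i.e.\ $\ker d=\im\bigl(H_G^{n-1}(M,\C)\bigr)$. The kernel of $a$ is strictly smaller: from the long exact sequence of \eqref{eq:ses1} one gets $\ker a=\im\bigl(H^{n-1}_G(M,\Z)\to \faktor{\Omega^{n-1}(M)^G}{d(\Omega^{n-2}(M)^G)}\bigr)$, i.e.\ closed invariant forms with \emph{integral} periods modulo exact ones. (Already for $M=pt$, $n=1$: $a\from\C\to\C/\Z$ has kernel $\Z$, not all of $\C=\im H^0(pt,\C)$.) So from the closedness of $\eta-\int_{[0,1]\times M/M}R(\hat x)$ you may only conclude that $a\bigl(\eta-\int R(\hat x)\bigr)$ is a flat class, i.e.\ that $i_1^*\hat x-i_0^*\hat x-a\bigl(\int R(\hat x)\bigr)$ lies in $\ker I\cap\ker R\cong\faktor{H^{n-1}_G(M,\C)}{H^{n-1}_G(M,\Z)}$ --- and the paper points out explicitly that this group is in general non-trivial. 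Your argument therefore proves the formula only up to an undetermined flat class.

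The paper's proof avoids this by organizing the homotopy-invariance input differently: it decomposes the class \emph{upstairs} on $[0,1]\times M$ as $\hat x=\pr_M^*\hat y+a(\omega)$ (using homotopy invariance of $H^*_G(-,\Z)$ to write $I(\hat x)=\pr_M^*y$ and surjectivity of $I$ to lift $y$), and then applies $i_1^*-i_0^*$ to each summand. The pullback summand cancels exactly because $\pr_M\circ i_t=\id_M$, and for the $a(\omega)$ summand Stokes' theorem for fibre integration together with $d\omega=R(\hat x)-\pr_M^*R(\hat y)$ and the vanishing of fibre integrals of basic forms gives the claim on the nose, with no flat ambiguity ever appearing. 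To repair your proof you would need an additional argument identifying the class of $\eta-\int_{[0,1]\times M/M}R(\hat x)$ in $H^{n-1}_G(M,\C/\Z)$ as zero, which essentially forces you back to the decomposition above.
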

\begin{proof}
 This is almost verbatim \cite[Prop. 3.28]{Bunke}. As integral cohomology is homotopy invariant, there is $y\in H^n_G(M,\Z)$, such that $I(x)=\pr_M^*y$. By surjectivity of $I$ there is a lift $\hat y\in \hat H_G^n(M,\Z)$, with $I(\hat y)=y$. Hence $I(\hat x - \pr_M^*\hat y)=0$ and thus $\hat x = \pr_M^*\hat y + a(\omega)$ for some $\omega\in (\Omega^{n-1}([0,1]\times M))^G$. Note that $R(\hat x) = R(\pr_M^*\hat y) + d\omega$.
\[i_1^*\hat x-i_0^*\hat x=i_1^*a(\omega)-i_0^*a(\omega)=a\left(\int_{[0,1]\times M/M} d\omega\right)=a\left(\int_{[0,1]\times M/M} R(\hat x)\right).\]
Where the second equality is Stokes theorem, and the third equality follows
as the fiber integral of pullback forms is zero:
\[\int_{[0,1]\times M/M} R(\pr_M^*\hat y)=\int_{[0,1]\times M/M} \pr_M^*(R(\hat y))=0.\]
\end{proof}

\subsection{A version for Lie groups by Gomi}

From now on, let $G$ denote a Lie group. In this section we restate the definition for equivariant smooth Deligne cohomology given in \cite{Gomi} and show how one can define a `curvature' map, which does lead to a differential cohomology hexagon. Gomi notes the lack of his definition himself (see \cite[Lemma 5.9]{Gomi}) and this lemma is the starting point for our alteration.\\[1ex]
Combining the ideas of Gomi with the cone construction, we reformulate the definition for the equivariant Deligne complex for Lie groups by Gomi:
\begin{defn} Let $M$ be a $G$-manifold for a Lie group $G$. The equivariant Deligne complex in degree $n$ is defined as
\[\DG(n)_{G^\bullet\times M}=\Cone(\underline\Integer\oplus\mathcal F^1_n\Omega^{\bullet,*}\to\Omega^{\bullet,*},(z,\omega)\mapsto \omega-z)[-1].\]
\symindex[d]{\DG(n)_{G^\bullet\times M}}\symindex[F]{\mathcal F^1_n\Omega^*_\Complex}
Here $\mathcal F^1_n\Omega^*_\Complex$ is the simplicial sub-sheaf achieved from the simplicial sheaf of differential forms on $G^\bullet\times M$ by imposing the following conditions: in simplicial level zero, i.e., on $M$, forms shall have at least degree $n$ and on any other level the differential form degree on the $G$-part is at least $1$, if the total form degree is less then $n$. 
\end{defn}

In particular, if $G$ is discrete, this is the same complex as in the last section, because on discrete groups $G$ there are no positive degree differential forms.
\begin{defn}
 Let $G$ be a Lie group acting on a smooth manifold M. The $G$-\emphind{equivariant differential cohomology} \symindex[H]{\hat H^n_G(M)} of $M$ is defined to be the hypercohomology 
\[\hat H^n_G(M)\defeq H^n(G^\bullet\times M, \DG(n)_{G^p\times M}).\]
\end{defn}
We investigate equivariant differential cohomology for Lie groups with the same methods as for finite groups namely with the following two short exact sequences
\begin{align}
\label{eq:Lieses1}0\to\Cone(\F^1_n\Omega^{\bullet,*}\overset{\iota}{\to}\Omega^{\bullet,*})[-1]\overset{a}{\to}&\DG(n)\overset{I}{\to} \underline\Z\to 0\\
\label{eq:Lieses2}0\to\Cone(\underline\Integer\overset{-\iota}{\to}\Omega^{\bullet,*})[-1]\to&\DG(n)\overset{R}{\to}\mathcal F^1_n\Omega^{\bullet,*}\to 0
\end{align}
of complexes of simplicial sheaves and also with the exact triangle
\[
 \DG(n)_{G^\bullet\times M}\to \underline\Z\oplus\mathcal F^1_n\Omega^{\bullet,*}\to \Omega^{\bullet,*}\to \DG(n)_{G^\bullet\times M}[1],
\]
which has the following interesting part in its long exact cohomology sequence
\begin{multline*}
 H^{n-1}(G^\bullet\times M,\underline\Z)\to H^{n-1}(G^\bullet\times M,\Omega^{\bullet,*})\to H^n(G^\bullet\times M,\DG(n))\\
\overset{(I,R)}{\to} H^n(G^\bullet\times M,\underline\Z)\oplus H^n(G^\bullet\times M,\F^1_n \Omega^{\bullet,*})\overset{(-\iota,\iota)}{\to} H^n(G^\bullet\times M,\Omega^{\bullet,*}).
\end{multline*} 
As before one has (\cite[Prop. 5.15 and Prop. 6.1]{Dupont}) 
\[H^n(G^\bullet\times M,\underline\Z)=H^n_G(M,\Z),\qquad H^n(G^\bullet\times M,\Omega^{\bullet,*})=H^n_G(M,\C)\]
and 
\[\Cone(\underline\Integer\overset{-\iota}{\to}\Omega^{\bullet,*})\simeq \underline{\faktor{\Complex^\delta}{\Integer}}.\]

Thus, the only things left to discuss, are the differential form sheaves on the left in \eqref{eq:Lieses1} and on the right in \eqref{eq:Lieses2}.
\begin{lemma}[Lemma 4.5 of {\cite{Gomi}}] \label{lemma:LieForms} Let $G$ be a compact Lie group, then
 \[H^n(G^\bullet\times M,\Cone(\F^1_n\Omega^{\bullet,*}\overset{\iota}{\to}\Omega^{\bullet,*})[-1])=\faktor{\Omega^{n-1}(M)^G}{d(\Omega^{n-2}(M)^G)}\]
and
\[H^{n+1}(G^\bullet\times M,\Cone(\F^1_n\Omega^{\bullet,*}\overset{\iota}{\to}\Omega^{\bullet,*})[-1])=0.\]
\end{lemma}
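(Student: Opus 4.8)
The plan is to compute the hypercohomology of the cone directly from a suitable double (here triple, but effectively double) complex of vector spaces, exploiting that differential forms are fine sheaves so each simplicial sheaf of forms is its own injective resolution. First I would identify $\Cone(\F^1_n\Omega^{\bullet,*}\overset{\iota}{\to}\Omega^{\bullet,*})[-1]$ with the quotient complex $\Omega^{\bullet,*}/\F^1_n\Omega^{\bullet,*}$ (up to quasi-isomorphism), since $\iota$ is injective; concretely this quotient complex, on global sections, has in simplicial level $0$ the truncated de Rham complex $\Omega^0(M)\to\cdots\to\Omega^{n-1}(M)\to 0$, and in simplicial level $p\geq 1$ the complex of forms on $G^p\times M$ of total degree $<n$ that are killed by restricting the $G$-directions to degree zero — i.e. only the ``zero $G$-form part'' survives in each such degree. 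So the relevant object is the total complex of a double complex whose $p$-th row is (a truncation of) the de Rham complex of $M$, with vertical differential $\del=\sum(-1)^i\del_i^*$ coming from the simplicial structure $G^\bullet\times M$.

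Next I would run the little spectral sequence argument of \autoref{lemma:specseq}, in the form already used twice in this section: the integration-over-the-group contraction of \autoref{lem:IntGroup} (in its Lie-group form, i.e. \autoref{lem:IntGroupGetzler} specialized to the $0$-form/$0$-polynomial part, or equivalently the contraction of $\del$ on $\Omega^{\bullet,*}(G^\bullet\times M)$ which exists for compact $G$ by averaging against Haar measure) shows that taking vertical $\del$-cohomology collapses each column to its $G$-invariant part in simplicial degree $0$. Hence the total complex is quasi-isomorphic to $\Omega^0(M)^G\overset{d}{\to}\cdots\overset{d}{\to}\Omega^{n-1}(M)^G\to 0$ sitting in horizontal degrees $0,\dots,n-1$ (with an overall shift from the $[-1]$). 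Reading off cohomology in degree $n$ gives $\Omega^{n-1}(M)^G/d(\Omega^{n-2}(M)^G)$, and in degree $n+1$ gives $0$ because the complex is zero above horizontal degree $n-1$; this is exactly the two claims.

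The step I expect to be the main obstacle is pinning down precisely what $\F^1_n\Omega^{\bullet,*}$ is in each bidegree and therefore what the quotient complex looks like — in particular making sure that in simplicial degree $p\geq 1$ and total form degree $<n$ the quotient really retains only the part with zero differential-form degree along $G^p$, so that the relevant row really is (a piece of) the de Rham complex of $M$ and the contraction of \autoref{lem:IntGroup} actually applies to it. One must check that $\del$ and $d$ preserve this subsheaf (and hence descend to the quotient), that the quotient is again a fine sheaf in each bidegree so hypercohomology is computed by global sections, and that the contraction $\int_G$ is compatible with the truncation $\sigma^{<n}$. Once those bookkeeping points are settled the spectral-sequence collapse is routine, and the degree count for vanishing in degree $n+1$ is immediate. (Alternatively one can cite \cite[Lemma 4.5]{Gomi} directly, as the statement does, and the above is the argument one would supply to make it self-contained.)
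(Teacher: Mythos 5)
Your proposal is correct and follows essentially the same route as the paper: identify the shifted cone with the quotient complex (the paper phrases this as a quasi-isomorphism onto the pullback $\pi^*\Omega^{\bullet,<n}$ of the truncated de Rham complex of $M$, which is the same identification), pass to global sections by fineness, and collapse the simplicial direction via the Haar-measure contraction of Lemma \ref{lem:IntGroup} together with Lemma \ref{lemma:specseq}, leaving $(\Omega^{<n}(M)^G,d)$ in simplicial degree zero. The bookkeeping points you flag (that $d$ and $\del$ preserve $\F^1_n\Omega^{\bullet,*}$, fineness of the quotient, compatibility of $\int_G$ with the truncation) are exactly the ones the paper also leaves implicit.
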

\begin{proof}Since all sheaves of differential forms occuring are fine, we can turn to the global sections. Moreover notice, that if \[\pi\from G^\bullet\times M \to \{e\}^\bullet\times M\]
is the map of simplicial manifolds induced by the unique map $G\to\{e\}$, then there is a quasi-isomorphism \[\Cone(\F^1_n\Omega^{\bullet,*}\overset{\iota}{\to}\Omega^{\bullet,*})\to \pi^*\Omega^{\bullet,<n}.\]

Further notice that, in analogy to \eqref{eq:IntGroup}, integration over the group with respect to a right invariant probability measure induces a contraction of the double complex in vertical direction. Hence the cohomology of the double complex 
\[\left(\pi^*\Omega^{\bullet,<n}(G^\bullet\times M),d,\del\right)\]
coincides with the cohomology of $\left(\Omega^{<n}(M)^G,d\right)$, what implies the two assertions.
\end{proof}
From \eqref{eq:Lieses1} we obtain an induced map 
\[a\colon\faktor{(\Omega^{n-1}(M))^G}{d\left(\Omega^{n-2}(M)^G\right)}\to H^n(G^\bullet\times M,\DG(n)).\]
\begin{lemma}
 $R\circ a=d+\del\colon \faktor{(\Omega^{n-1}(M))^G}{d\left(\Omega^{n-2}(M)^G\right)}\to \Omega_{\cl}^{n}(M)^G\oplus \mathfrak{g}\dual\otimes\Omega^{n-2}(M)$
\end{lemma}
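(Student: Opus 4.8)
The plan is to mirror the computation of $R\circ a$ in the finite-group case: write down an explicit cocycle in $\DG(n)$ representing $a(\eta)$ and then apply $R$, which is simply the projection of the cone $\DG(n)=\Cone(\underline\Z\oplus\F^1_n\Omega^{\bullet,*}\to\Omega^{\bullet,*})[-1]$ onto its $\F^1_n\Omega^{\bullet,*}$-summand. Recall from the proof of \autoref{lemma:LieForms} that the quasi-isomorphism $\Cone(\F^1_n\Omega^{\bullet,*}\overset{\iota}{\to}\Omega^{\bullet,*})\to\pi^*\Omega^{\bullet,<n}$, together with the integration-over-$G$ contraction used there, identifies $H^n$ of the cone with $\faktor{\Omega^{n-1}(M)^G}{d(\Omega^{n-2}(M)^G)}$. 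Running this backwards — exactly as in the finite-group case, where in degree $n$ the identification reads $\eta\mapsto(d\eta,\eta)$ — shows that the class of $\eta\in\Omega^{n-1}(M)^G$ is represented, as a degree-$n$ cocycle of the cone complex $\Cone(\F^1_n\Omega^{\bullet,*}\overset{\iota}{\to}\Omega^{\bullet,*})[-1]$, by the pair $(D\eta,\eta)$, where $D=d+(-1)^{\bullet}\del$ is now the \emph{full} total differential of the simplicial de Rham double complex on $G^\bullet\times M$. (The cocycle condition in the cone forces the $\F^1_n\Omega^{\bullet,*}$-entry to equal $D$ applied to the $\Omega^{\bullet,*}$-entry, and the latter must reduce to $\eta$; this pins the representative down.) In the finite case $D\eta$ was simply $d\eta$ because $\del\eta$ vanishes there; for a Lie group the term $\del\eta$ persists and carries the new information.

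The next step is to check that $D\eta=d\eta+(-1)^{n-1}\del\eta$ indeed lies in the subsheaf $\F^1_n\Omega^{\bullet,*}$. Its simplicial-level-zero part is $d\eta\in\Omega^n(M)$, of form degree $n$, as demanded on level zero. Its simplicial-level-one part is $\del\eta=\del_0^*\eta-\del_1^*\eta\in\Omega^{n-1}(G\times M)$; since $\del_0$ is the projection to $M$ and $\del_1$ is the action map, the $G$-invariance of $\eta$ makes $\del_0^*\eta$ and $\del_1^*\eta$ agree on every tuple of vectors tangent to the $M$-factor, so $\del\eta$ has $G$-form-degree at least $1$ and hence lies in $\F^1_n$; the higher simplicial levels of $D\eta$ vanish. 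Pushing $(D\eta,\eta)$ into $\DG(n)$ along the map $a$ of \eqref{eq:Lieses1} (it inserts a zero $\underline\Z$-component) and projecting by $R$ yields $R(a(\eta))=D\eta=d\eta+(-1)^{n-1}\del\eta$.

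It remains to read off the two components of this class under the identification of the target $H^n(G^\bullet\times M,\F^1_n\Omega^{\bullet,*})$ of $R$ with (a subgroup of) $\Omega_{\cl}^{n}(M)^G\oplus\g\dual\otimes\Omega^{n-2}(M)$. The simplicial-level-zero entry is $d\eta$, which is closed and, since $\eta$ is invariant, $G$-invariant, so it provides the $\Omega_{\cl}^{n}(M)^G$-component. The simplicial-level-one entry $\del\eta$ supplies the $\g\dual\otimes\Omega^{n-2}(M)$-component: applying Getzler's map $\mathcal J$ of \autoref{def:Getzlermap} — which sees only the $0$-form part and the $G$-direction $1$-form part of a cochain — to $\del\eta=\del_0^*\eta-\del_1^*\eta$ extracts, at the identity $e\in G$, the assignment $X\mapsto\iota_{X^\sharp}\eta$ (up to an overall sign fixed by the conventions for the fundamental vector field and for $\mathcal J$), because $\iota_{X^\sharp}\eta$ is precisely the contraction of $\del_1^*\eta=a^*\eta$ along the fundamental vector field. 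Thus $R\circ a$ sends $\eta$ to $d\eta$ together with the assignment $X\mapsto\iota_{X^\sharp}\eta$, which is what $d+\del$ abbreviates, using the correspondence (due to Getzler) between the simplicial differential $\del$ and the Cartan contraction. That this descends to the quotient by $d(\Omega^{n-2}(M)^G)$ is then automatic, but can also be seen directly: for $\eta=d\xi$ with $\xi\in\Omega^{n-2}(M)^G$ one has $\del\xi\in\F^1_n\Omega^{\bullet,*}$ in bidegree $(1,n-2)$ (again by invariance) and $D(d\xi)=\pm D(\del\xi)$, so $R(a(\eta))=0$.

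The routine parts are the sign bookkeeping through the cone shifts and the integration-over-$G$ contractions. The step that needs genuine care is the last one: identifying the target $H^n(G^\bullet\times M,\F^1_n\Omega^{\bullet,*})$ of $R$ with the relevant subspace of closed \emph{truncated} equivariant forms inside $\Omega_{\cl}^{n}(M)^G\oplus\g\dual\otimes\Omega^{n-2}(M)$, and checking that under this identification the simplicial-level-one cocycle $\del\eta$ goes to $X\mapsto\iota_{X^\sharp}\eta$. This is exactly where Getzler's quasi-isomorphism $\mathcal J$ of Section~\ref{sec:quasiiso} and its compatibility with the Cartan differential $d+\iota$ are used, and it is also the point at which the Cartan-model datum $\g\dual\otimes\Omega^{n-2}(M)$ — invisible in Gomi's original curvature map — first enters the picture.
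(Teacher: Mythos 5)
Your proposal is correct and follows essentially the same route as the paper: it represents $a(\eta)$ by the explicit cone cocycle whose $\F^1_n\Omega^{\bullet,*}$-entry is the total differential applied to $\eta$ (i.e.\ $(\dots,0,\del\eta,d\eta)$, exactly the representative the paper writes down) and observes that $R$ is the projection onto that entry. The additional verifications you supply (that $\del\eta$ lands in $\F^1_n$ by invariance, well-definedness modulo $d(\Omega^{n-2}(M)^G)$, and the identification of $\del\eta$ with $X\mapsto\iota_{X^\sharp}\eta$ via $\mathcal J$) are correct elaborations of steps the paper leaves implicit or defers to Proposition \ref{pro:f1niso}.
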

\begin{proof}
 Let $\eta\in \faktor{(\Omega^{n-1}(M))^G}{d\left(\Omega^{n-2}(M)^G\right)}$. The quasi-isomorphism applied in the proof of Lemma \ref{lemma:LieForms} together with the inclusion of the invariant forms into the simplical chain complex is defined, such that $a(\eta)=(0,(\dots,0,\del \eta,d\eta),(\dots0,\eta))$. Moreover $R$ is the projection to the tuple of forms in the middle. Thus we obtain the assertion.
\end{proof}
Collecting the statements, shown above, yields to the following theorem.
\begin{theorem}\label{thm:GomiHexagon1}Let $G$ be a compact Lie group acting from the left on the smooth manifold $M$. Then there is the following commutative diagram
\begin{equation} \label{LieHexagon}
\begin{scaledcd}{.9}
\phantom{.} &\hspace{-5em}\faktor{(\Omega^{n-1}(M))^G}{d\left(\Omega^{n-2}(M)^G\right)} \arrow{dr}{a} \arrow{rr}{d+\del}& & H^n(G^\bullet\times M,\F_n^1\Omega^*)\arrow[two heads]{dr}{}&\\
H_G^{n-1}(M,\C)\arrow{ru}{}\arrow{dr}{}&&\hspace{-1em}\hat H_G^n(M,\Z) \hspace{-1em}\arrow{ru}{R}\arrow[two heads]{dr}{I}&&\hspace{-1em}H_G^n(M,\C),\\
&\hspace{-3em}H_G^{n-1}(M,\C/\Z) \arrow{ur}{} \arrow{rr}{-\beta}&&H_G^n(M,\Z) \arrow{ur}{}&
\end{scaledcd}
\end{equation}
where the top line, the bottom line and the diagonals are exact. 
\end{theorem}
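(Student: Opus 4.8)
The plan is to assemble the hexagon formally from the cone construction, just as in the non-equivariant case (see \cite{BNV}) and in Theorem \ref{thm:finitehexagon}: every genuinely new ingredient --- the identification of the cohomology of the auxiliary cones and the computation of the composite $R\circ a$ --- is already in place, so what remains is to read off three long exact sequences, glue them around the single vertex $\hat H_G^n(M,\Z)$, and verify commutativity.

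First I would produce the two diagonals through the center. From the short exact sequence \eqref{eq:Lieses2} of complexes of simplicial sheaves, together with the quasi-isomorphism $\Cone(\underline\Z\overset{-\iota}{\to}\Omega^{\bullet,*})\simeq\underline{\C^\delta/\Z}$ and the identification of its cohomology with $H^*_G(M,\C/\Z)$, the long exact cohomology sequence gives the exact diagonal
\[H_G^{n-1}(M,\C/\Z)\longrightarrow\hat H_G^n(M,\Z)\overset{R}{\longrightarrow}H^n(G^\bullet\times M,\F^1_n\Omega^*).\]
From \eqref{eq:Lieses1} together with Lemma \ref{lemma:LieForms} --- which identifies $H^n$ of $\Cone(\F^1_n\Omega^{\bullet,*}\overset{\iota}{\to}\Omega^{\bullet,*})[-1]$ with $\faktor{\Omega^{n-1}(M)^G}{d(\Omega^{n-2}(M)^G)}$ and shows $H^{n+1}$ of the same cone vanishes --- the long exact sequence gives the exact diagonal
\[\faktor{\Omega^{n-1}(M)^G}{d(\Omega^{n-2}(M)^G)}\overset{a}{\longrightarrow}\hat H_G^n(M,\Z)\overset{I}{\longrightarrow}H_G^n(M,\Z),\]
with $I$ surjective exactly because that $H^{n+1}$ vanishes; the composite $R\circ a$ is the map $d+\del$ recorded in the lemma preceding the theorem.

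Next I would obtain the two outer four-term sequences and the third diagonal from the exact triangle $\DG(n)_{G^\bullet\times M}\to\underline\Z\oplus\F^1_n\Omega^{\bullet,*}\to\Omega^{\bullet,*}\to\DG(n)_{G^\bullet\times M}[1]$. Substituting $H^*(G^\bullet\times M,\underline\Z)=H_G^*(M,\Z)$ and $H^*(G^\bullet\times M,\Omega^{\bullet,*})=H_G^*(M,\C)$ into its long exact sequence already yields the third diagonal $H_G^{n-1}(M,\C)\to\hat H_G^n(M,\Z)\to H_G^n(M,\C)$ together with the terms of the two outer sequences. To see that these outer sequences really are the top and bottom lines of the hexagon I would compare the triangle with two auxiliary long exact sequences: on the one hand the sequence of $0\to\F^1_n\Omega^{\bullet,*}\to\Omega^{\bullet,*}\to\Cone(\F^1_n\Omega^{\bullet,*}\overset{\iota}{\to}\Omega^{\bullet,*})\to0$, which by Lemma \ref{lemma:LieForms} is the de Rham--Cartan line $H_G^{n-1}(M,\C)\to\faktor{\Omega^{n-1}(M)^G}{d\Omega^{n-2}(M)^G}\overset{d+\del}{\to}H^n(G^\bullet\times M,\F^1_n\Omega^*)\to H_G^n(M,\C)$, the last map being onto because $H^n$ of the cone vanishes; and on the other hand the Bockstein sequence of $0\to\underline\Z\to\underline\C^\delta\to\underline{\C^\delta/\Z}\to0$, which is the bottom line $H_G^{n-1}(M,\C)\to H_G^{n-1}(M,\C/\Z)\overset{-\beta}{\to}H_G^n(M,\Z)\to H_G^n(M,\C)$. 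The comparisons are induced by the inclusions of \eqref{eq:Lieses1} and \eqref{eq:Lieses2} into the triangle, exactly as in the two-row commutative diagram used for finite groups.

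Finally, commutativity of the triangles and squares in the hexagon is pure naturality: each arrow comes either from a morphism of the underlying complexes of simplicial sheaves or from a connecting homomorphism of one of the three sequences above, and a morphism of short exact sequences induces a commuting ladder of long exact sequences; the only composite needing an explicit formula, $R\circ a=d+\del$, is the content of the lemma just proved. I do not expect a serious obstacle: all the hard analytic input --- in particular the contraction ``integration over $G$'' with respect to a right-invariant probability measure, which is where compactness of $G$ enters in Lemma \ref{lemma:LieForms} --- is already established, and the remaining work is the bookkeeping of signs and checking that the three long exact sequences glue consistently around $\hat H_G^n(M,\Z)$.
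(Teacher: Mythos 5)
Your proposal is correct and follows essentially the same route as the paper: the two short exact sequences \eqref{eq:Lieses1} and \eqref{eq:Lieses2} give the $a$--$I$ and $\C/\Z$--$R$ diagonals, the exact triangle gives the third diagonal and is compared with the Bockstein sequence and the de Rham--Cartan line via the inclusions of the cones, with Lemma \ref{lemma:LieForms} supplying the identification of the corner group and the surjectivity of $I$, and the preceding lemma supplying $R\circ a=d+\del$. The paper's own proof is exactly this ``collecting the statements shown above,'' carried out in detail for the finite-group case in Theorem \ref{thm:finitehexagon} and transported verbatim.
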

\begin{remark}
 Parts of this diagram are due to Gomi (\cite{Gomi}), but, as he -- partially -- defined maps to different groups in the corners, he did not achieve the entire hexagon. The curvature map of Gomi can be recovered by combining the curvature map $R$, given above in the hexagon, with the map
 \[H^n(G^\bullet\times M,\F_n^1\Omega^*)\to \Omega^n_\cl(M)^G,\]
 induced from projecting a cocycle $\bigoplus_{i=0}^n \Omega^{n-1}(G^i\times M)\ni(\omega_i)\mapsto \omega_0$ to the invariant form part.

 If $G$ is a discrete group, this reduces to the diagram of \autoref{thm:finitehexagon}. If $G$ is non-discrete and acting freely on $M$, such that the quotient space is a manifold, one would like to compare equivariant differential cohomology with differential cohomology of the quotient. In general, one can not expect, that $\hat H_G^n(M,\Z)=\hat H^n(M/G,\Z)$ as  $\left(\Omega^{n-1}(M)\right)^G$ is different from $\Omega^{n-1}(M/G)$. To see this in a very explicit example, take $M=G$, then, in degree $n=2$,  $\Omega^{n-1}(M)^G=\Omega^{1}(G)^G=\mathfrak g\dual$, but $\Omega^{n-1}(M/G)=\Omega^{1}(pt)=0$.

 Moreover, one can not expect, that the map $H_G^{n-1}(M,\C/\Z)\to \hat H_G^n(M,\Z)$ is injective as in the discrete case, because $H^{n-1}(G^\bullet\times M,\F_n^1\Omega^*)$ will not vanish in general. To see this, take the following example for any positive dimensional Lie group $G$:
\[
H^{2}(G^\bullet\times M,\F_3^1\Omega^*)=\ker\left(d+\del\colon \F^1\Omega^1(G\times M)\to \Omega^2(G\times M)\oplus\Omega^1(G\times G\times M)\right)
\]
If $\omega\in \Omega^1(G\times M)$ has form degree one on $G$, then $\del \omega=0$ means that for any $g_1,g_2 \in G$, $m\in M$ and any vector field $X=X_1+X_2+X_M$, decomposed into the tangent direction of the first copy of $G$, the second copy of $G$ and $M$, one has
\begin{align}
 0&=(\del \omega)(g_1,g_2,m)[X]\nonumber\\
&=\omega(g_2,m)[X_2]-\omega(g_1g_2,m)[X_1g_2+g_1X_2]+\omega(g_1,g_2m)[X_1]\label{eq:exampleform}
 \end{align}
Taking $X_1=0$ this implies, that actually $\omega=f\in C^\infty(M,g^\vee\otimes \C)$. Moreover, taking $X_2=0$ in \eqref{eq:exampleform}, we obtain $Ad_g\circ f= L_g^* f$ for any $g\in G$. Finally, since $d\omega=0$, one has $d_M f=0$. Hence 
\[H^{2}(G^\bullet\times M,\F_3^1\Omega^*)=\map (\pi_0(M),g^\vee)\neq\emptyset.\]
\end{remark}
\begin{example} 
When constructing characteristic classes, the cohomology of the classifying space is highly interesting. Let $G$ be a group and $EG\to BG$ the universal bundle. Then for cohomology with any coefficient group one has
\[H^*(BG)=H^*(EG/G)=H^*_G(EG)=H^*_G(pt),\]
where $pt$ is the manifold consisting of a single point. Hence our question is: What is $\hat H^*_G(pt,\Z)$?

In this case the hexagon becomes
\begin{equation} 
\begin{tikzcd}
\phantom{.} &\Omega^{n-1}(pt) \arrow{dr}{a} \arrow{rr}{d+\del}& & H^n(G^\bullet\times pt,\F_n^1\Omega^*)\hspace{-2em}\arrow{dr}{}&\\
H^{n-1}(BG,\C)\arrow{ru}{}\arrow{dr}{}&&\hspace{-1em}\hat H_G^n(pt,\Z) \hspace{-1em}\arrow{ru}{R}\arrow[two heads]{dr}{I}&&\hspace{-1em}H^n(BG,\C).\\
&\hspace{-3em}H^{n-1}(BG,\C/\Z) \arrow{ur}{} \arrow{rr}{-\beta}&&H^n(BG,\Z) \arrow{ur}{}&
\end{tikzcd}
\end{equation}
Hence $\hat H_G^n(pt,\Z)= H_G^n(pt,\Z)$ if $n\neq 1$ and, as $H^1(G^\bullet\times pt,\F_1^1\Omega^*)=0$, we get $\hat H^1_G(pt)=H_G^0(pt,\C/\Z)=\C/\Z$ if $G$ is connected. Maybe one wonders whether this $\C/\Z$ yields some characteristic class like information. The answer is: Pulling back an element of $\C/\Z$ via the classifying map of some principal $G$ bundle, just gives a constant function on the base space.
\end{example}

In Section \ref{sec:Cartan} we defined the Cartan complex 
\[(d+\iota)_n \from \Omega_G^n(M) \to \Omega^{n+1}_G(M)\] 
which calculates equivariant cohomology, where $\Omega^n_G(M)=\left(\left(S^*(\g\dual)\otimes \Omega^*(M)\right)^G\right)^n$. We want to compare the group $H^n(G^\bullet\times M,\F_n^1\Omega^*)$ in the upper right corner of \eqref{LieHexagon} with the Cartan model.
\begin{prop}\label{pro:f1niso}
There is a natural isomorphism
\[H^n(G^\bullet\times M,\F_n^1\Omega^*) \to \raisebox{1ex}{$\ker(d+\iota)_n$}\mkern-8mu\raisebox{-1ex}{\scalebox{2}{$\diagup$}}\mkern-15mu\raisebox{-2.5ex}{$(d+i)\left(\bigoplus\limits_{k=1}^{n/2} \left(S^k(\g\dual)\otimes \Omega^{n-1-2k}(M)\right)^G\right).$}\]\end{prop}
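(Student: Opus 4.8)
The strategy is to compute the left-hand side using a fine-sheaf argument to pass to global sections, then reduce the bicomplex of simplicial forms to something built from the Cartan complex via the contraction machinery already available. Concretely, since $\F^1_n\Omega^{\bullet,*}$ is a subsheaf of the simplicial sheaf of differential forms, which is fine, the hypercohomology $H^n(G^\bullet\times M,\F_n^1\Omega^*)$ is the cohomology of the total complex of the bicomplex of global sections $\F^1_n\Omega^q(G^p\times M)$ with differentials $d$ (horizontal, the de Rham differential on $G^p\times M$) and $\del$ (the simplicial alternating sum). I would first apply Getzler's quasi-isomorphism $\mathcal J$ from Definition \ref{def:Getzlermap} (and the preceding lemmata) to replace the double complex $\Omega^*(G^\bullet\times M)$ by the complex $C^*(G,S^*(\g\dual)\otimes\Omega^*(M))$ with differential $d_G=\bar d+\bar\iota+(-1)^p(d+\iota)$, being careful to track what the filtration condition defining $\F^1_n$ becomes under $\mathcal J$: by the observation right after Definition \ref{def:Getzlermap}, $\mathcal J(\omega)$ depends only on the zero-form part of $\omega$ on $M$ and the one-form-at-$e$ part along each copy of $G$, so the $\F^1_n$-condition (degree $\geq n$ on $M$, at least one $G$-direction below total degree $n$) should translate into a sub-simplicial-homotopy-complex of $C^*(G,S^*(\g\dual)\otimes\Omega^*(M))$ cut out by a polynomial-degree / form-degree bound.

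**Reduction via integration over the group.** Once on the Getzler side, the key tool is Lemma \ref{lem:IntGroupGetzler}: integration over the compact group $G$ against a right-invariant probability measure contracts $\bar d$, so the $\bar d$-cohomology of $C^*(G,S^*(\g\dual)\otimes\Omega^*(M))$ is concentrated in simplicial degree zero, where it equals the Cartan complex $\Omega_G^*(M)=(S^*(\g\dual)\otimes\Omega^*(M))^G$. Running the spectral sequence of the double complex (or applying Lemma \ref{lemma:specseq} after an appropriate rearrangement into a simplicial homotopy cochain complex as in Section \ref{sec:Getzler}) collapses everything onto the Cartan complex with its differential $d+\iota$. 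The truncation encoded by $\F^1_n$ then becomes, in simplicial degree zero, the condition that a Cartan form of total degree $<n$ has \emph{no} purely-$M$-part — i.e. positive polynomial degree — which is exactly the subspace $\bigoplus_{k\geq 1}(S^k(\g\dual)\otimes\Omega^{\bullet-2k}(M))^G$ appearing in the denominator. So in degree $n$ one gets: $n$-cocycles are genuine Cartan $n$-cocycles, i.e. $\ker(d+\iota)_n$, while the image of the total differential contributes, besides exact $n$-cochains coming from ordinary Cartan $(n-1)$-forms (which would make this the ordinary Cartan cohomology), only the extra coboundaries $(d+\iota)$ applied to the positive-polynomial-degree part $\bigoplus_{k=1}^{n/2}(S^k(\g\dual)\otimes\Omega^{n-1-2k}(M))^G$. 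This yields the claimed quotient.

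**Main obstacle.** The delicate point is not the homological algebra — the contraction arguments are routine given the lemmata already proved — but verifying precisely how the filtration $\F^1_n$ interacts with Getzler's map $\mathcal J$ and its inverse quasi-isomorphism. One must check that $\mathcal J$ carries the subcomplex $\F^1_n\Omega^{\bullet,*}$ into exactly the truncated subcomplex of $C^*(G,S^*(\g\dual)\otimes\Omega^*(M))$ described above, and that this restriction is still a quasi-isomorphism; this requires re-examining the shuffle-sum formula in Definition \ref{def:Getzlermap} degree-by-degree to see that the ``at least one $G$-direction'' condition below total degree $n$ corresponds under $\mathcal J$ to ``polynomial degree $\geq 1$'' below degree $n$, using that each contraction $\iota_{\pi(j)}(X_j^{(\pi)})$ trades a $G$-form-degree for a unit of polynomial degree. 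Handling the degree-$n$ boundary term of $\F^1_n$ (where on $M$ forms of degree exactly $n$ are allowed) carefully, so that it contributes nothing spurious to either numerator or denominator, is the fiddliest part. Once the identification of subcomplexes is pinned down, naturality in $M$ is automatic since every construction involved ($\mathcal J$, integration over $G$, the spectral sequence comparison) is natural.
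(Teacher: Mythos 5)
Your outline is correct and shares the paper's two key ingredients -- the image of $\F^1_n\Omega^{\bullet,*}$ under Getzler's map $\mathcal J$ is contained in the truncated subcomplex of $C^\bullet(G,S^*(\g\dual)\otimes\Omega^*(M))$ obtained by killing the components of polynomial degree $0$ and form degree $<n$, and integration over the compact group is what ultimately collapses things onto Cartan data -- but you close the argument differently, and at the one point you yourself flag as ``fiddliest'' the paper has a cleaner device. You propose to verify directly, from the shuffle formula, that $\mathcal J$ restricted to the truncated subcomplexes is still a quasi-isomorphism, and then to contract that subcomplex onto a truncated Cartan complex. The paper avoids both direct verifications: it forms the quotient complexes $\Omega^{\bullet,*}/\F^1_n\Omega^{\bullet,*}$ and $C^\bullet/X^{\bullet,*,*}$, observes (by integrating over the first copy of $G$, i.e.\ your Lemma \ref{lem:IntGroup}-type contraction applied to the \emph{quotient} rather than the subcomplex) that both have cohomology $\Omega^{n-1}(M)^G/d(\Omega^{n-2}(M)^G)$ in degree $n-1$, and then runs the five lemma on the resulting four-term exact sequences with outer terms $H_G^{n-1}(M,\C)$ and $H_G^n(M,\C)$; a second application of the five lemma, comparing with the inclusion of the Cartan complex into Getzler's resolution, identifies $H^n(X^{\bullet,*,*})$ with the quotient of $\ker(d+\iota)_n$ in the statement. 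So the restricted quasi-isomorphism you want is obtained for free from the ambient one plus the quotient computation, rather than re-examined degree by degree; your route is workable (the truncation is cut out by conditions on polynomial and form degree only, which $\int_G$ preserves, so the contraction does restrict), but if you pursue the direct check of the shuffle sums you will likely end up reconstructing the quotient argument anyway. I would recommend adopting the five-lemma formulation, which also packages the ``boundary degree $n$'' bookkeeping you worry about into the exactness of the rows.
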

\begin{proof}
 In Section \ref{sec:quasiiso} we defined a quasi-isomorphism 
\[\mathcal J\from \Omega^*(G^p\times M)\to \bigoplus_{l=0}^p C^l(G,S^*(\g\dual)\otimes \Omega^*(M)).\]
Let 
\[X^{l,k,m}=\begin{cases}
	     0 &\text{ if } k=0 \text{ and } m<n,\\
             C^l(G,S^k(\g\dual)\otimes \Omega^m(M) &\text{otherwise.}
            \end{cases}\]
The double complex $(X^{\bullet,(2*+*)},d+\iota+\bar d+\bar\iota)$ is a subcomplex of $C^\bullet(G,S^*(\g\dual)\otimes \Omega^*(M))$: One has to check, that the inclusion commutes with boundaries. By the way $X$ is defined, the only reason for which it is maybe not a subcomplex, could arise from the maps which are turned into zero maps, as they map to the zero space. Thus, the problem can only come from maps lowering indices, namely $\iota$ and $\bar\iota$, but these two raise the second index, hence there image does not lie in one the spaces $X^{l,0,m}$, with $m<n$. 

From the definition of $\mathcal J$ one checks that 
\[\mathcal J(\F_n^1\Omega^*(G^\bullet\times M))\subset X^{\bullet,*,*}.\]
Moreover, $\mathcal J$ is the identity on those forms, which have vanishing degree on the group part and \[H^{n-1}\left(\faktor{C^\bullet(G,S^*(\g\dual)\otimes \Omega^*(M))}{X^{\bullet,2*,*}}\right)=
\faktor{\Omega^{n-1}(M)^G\!\!}{d\left(\Omega^{n-2}(M)^G\right)}\] by integration over the first copy of $G$ (compare Lemma \ref{lem:IntGroup}). Hence, $\mathcal J$ and the inclusion of the Cartan complex into Getzler's resolution induce the following commutative diagram with exact rows
\[\begin{scaledcd}{0.8}
H_G^{n-1}(M,\C)\arrow{r}{}\arrow{d}{\mathcal J_*} &\faktor{(\Omega^{n-1}(M))^G}{d\left(\Omega^{n-2}(M)^G\right)} \arrow{d}{\id} \arrow{r}{d+\del}& H^n(G^\bullet\times M,\F_n^1\Omega^*)\arrow{r}{}\arrow{d}{\mathcal J_*}&H_G^n(M,\C)\arrow{r}\arrow{d}&0\\
H_G^{n-1}(M,\C)\arrow{r}{} &\faktor{(\Omega^{n-1}(M))^G}{d\left(\Omega^{n-2}(M)^G\right)} \arrow{r}{d+\iota}& H^n(X^{*,(2*+*)})\arrow{r}{}&H_G^n(M,\C)\arrow{r}&0\\
H_G^{n-1}(M,\C)\arrow{r}{}\arrow{u}{} &\faktor{(\Omega^{n-1}(M))^G}{d\left(\Omega^{n-2}(M)^G\right)} \arrow{u}{\id} \arrow{r}{d+\iota}&\faktor{\ker(d+\iota)_n}{\sim} \arrow{u}\arrow{r}{}&H_G^n(M,\C)\arrow{u}\arrow{r}&0
\end{scaledcd}\]
where $\faktor{\ker(d+\iota)_n}{\sim}$ should denote the right-hand side of the assertion. By the five lemma this diagram shows that there is the isomorphism as claimed.
\end{proof}

The discussion of this section thus manifests in the following alteration of Theorem \ref{thm:GomiHexagon1}.
\begin{theorem}For any compact Lie group acting on a smooth manifold $M$, there is the commutative diagram
\begin{equation} \label{LieHexagon2}
\begin{scaledcd}{.8}
\phantom{.} &\hspace{-5em}\faktor{(\Omega^{n-1}(M))^G}{d\left(\Omega^{n-2}(M)^G\right)} \arrow{dr}{a} \arrow{rr}{d+\iota}& & \hspace{-.2em} \faktor{\ker(d+\iota)_n}{(d+i)\left(\bigoplus_{k=1}^{n/2} S^k(\g\dual)\otimes \Omega^{n-1-2k}(M)\right)^G}\hspace{-6em}\arrow[two heads]{dr}{}&\\
H_G^{n-1}(M,\C)\arrow{ru}{}\arrow{dr}{}&&\hspace{-1em}\hat H_G^n(M,\Z) \arrow{ru}{R}\arrow[two heads]{dr}{I}&&\hspace{-1em}H_G^n(M,\C)\\
&\hspace{-3em}H_G^{n-1}(M,\C/\Z) \arrow{ur}{} \arrow{rr}{-\beta}&&H_G^n(M,\Z) \arrow{ur}{}&
\end{scaledcd}
\end{equation}
whose top line, bottom line and diagonals are exact.
\end{theorem}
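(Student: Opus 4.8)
The plan is to derive the diagram by transporting the upper right corner of the hexagon of Theorem~\ref{thm:GomiHexagon1} along the natural isomorphism $\Phi$ of Proposition~\ref{pro:f1niso}, which identifies
\[
\Phi\from H^n(G^\bullet\times M,\F_n^1\Omega^*)\;\xrightarrow{\ \sim\ }\;\faktor{\ker(d+\iota)_n}{(d+\iota)\left(\bigoplus_{k=1}^{n/2}\left(S^k(\g\dual)\otimes\Omega^{n-1-2k}(M)\right)^G\right)}.
\]
First I would take the hexagon~\eqref{LieHexagon} of Theorem~\ref{thm:GomiHexagon1} verbatim and replace its corner $H^n(G^\bullet\times M,\F_n^1\Omega^*)$ by the target of $\Phi$, redefining the curvature as $\Phi\circ R$, the top horizontal arrow as $\Phi\circ(d+\del)$, and the remaining diagonal out of the corner as the composite of $\Phi\invers$ with the old surjection $H^n(G^\bullet\times M,\F_n^1\Omega^*)\twoheadrightarrow H_G^n(M,\C)$. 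Because $\Phi$ is an isomorphism, exactness of the top line and of the two arms of the hexagon incident to this corner is inherited from Theorem~\ref{thm:GomiHexagon1} (kernels and images of the maps into and out of the corner are unchanged), while the bottom line and every arrow not touching the corner are literally those of Theorem~\ref{thm:GomiHexagon1} and need not be revisited.

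Second, I would match the transported maps with those named in the statement; the commutative diagram with exact rows in the proof of Proposition~\ref{pro:f1niso} already records exactly these compatibilities. Its top row is the five-term exact sequence
\[
H_G^{n-1}(M,\C)\to\faktor{(\Omega^{n-1}(M))^G}{d\left(\Omega^{n-2}(M)^G\right)}\xrightarrow{\,d+\del\,}H^n(G^\bullet\times M,\F_n^1\Omega^*)\to H_G^n(M,\C)\to 0,
\]
obtained from the long exact cohomology sequence of the triangle $\F_n^1\Omega^{\bullet,*}\to\Omega^{\bullet,*}\to\Cone(\F_n^1\Omega^{\bullet,*}\overset{\iota}{\to}\Omega^{\bullet,*})\to\F_n^1\Omega^{\bullet,*}[1]$ together with Lemma~\ref{lemma:LieForms}; the other two rows are the analogous sequences for the subcomplex $X^{\bullet,(2*+*)}$ of Getzler's resolution and for the Cartan model, with $d+\iota$ in place of $d+\del$; and the vertical isomorphisms --- $\mathcal J_*$ on one side and the inclusion of the Cartan complex into Getzler's resolution on the other --- compose to $\Phi$ on the corner, are the identity on $\faktor{(\Omega^{n-1}(M))^G}{d(\Omega^{n-2}(M)^G)}$, and are compatible with the edge maps to $H_G^{n-1}(M,\C)$ and $H_G^n(M,\C)$. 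Reading off this diagram gives $\Phi\circ(d+\del)=d+\iota$ on the quotient and identifies the transported corner-to-$H_G^n(M,\C)$ arrow with the map sending a Cartan cocycle to its equivariant cohomology class; the latter's surjectivity is the vanishing $H^{n+1}(G^\bullet\times M,\Cone(\F_n^1\Omega^{\bullet,*}\overset{\iota}{\to}\Omega^{\bullet,*})[-1])=0$ of Lemma~\ref{lemma:LieForms}, which survives $\Phi$ untouched.

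The only point that genuinely requires work, rather than citation, is the bookkeeping behind that diagram: reconciling the sign conventions of Sections~\ref{sec:Getzler} and~\ref{sec:quasiiso} with those of~\eqref{eq:Lieses1}--\eqref{eq:Lieses2}, and verifying the surjectivity of $H^{n-1}(G^\bullet\times M,\F_n^1\Omega^*)\to H_G^{n-1}(M,\C)$ that makes the five-term sequence above exact at its second term --- this follows by integrating over the first copy of $G$, as in Lemma~\ref{lem:IntGroup}. Once these are settled, splicing the five-term sequence to the Bockstein-type sequence coming from~\eqref{eq:Lieses2} along the inclusion~\eqref{eq:Lieses1}, exactly as in the proof of Theorem~\ref{thm:GomiHexagon1}, assembles the hexagon, with no further geometric input needed.
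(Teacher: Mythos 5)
Your proposal is correct and follows essentially the same route as the paper: the paper's own proof is just the one-line observation that the theorem is the hexagon of Theorem~\ref{thm:GomiHexagon1} with its upper right corner rewritten via the natural isomorphism of Proposition~\ref{pro:f1niso}. You have merely spelled out the transport of structure and the compatibilities already recorded in the commutative diagram inside the proof of Proposition~\ref{pro:f1niso}, which is exactly the intended argument.
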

\subsection{The new version for Lie groups}\label{sec:newdef}
In the last section, the geometric refinement was done only with respect to the manifold. In this section, we will give another solution, where one enriches the equivariant cohomology by all closed Cartan forms.

Therefore we want to use the model for equivariant cohomology defined by Getzler, which we introduced in Section \ref{sec:Getzler}. As noted there, this model is not a cochain complex of simplicial modules, but only a simplicial homotopy cochain complex. To proceed as in the previous Section and do a similar cone-construction, we first have to investigate the algebraic structure of simplicial homotopy cochain complexes in more detail.

\begin{defn}
    A \emphind{simplicial sheaf homotopy cochain complex} of modules on a simplicial manifold $M_\bullet$ is a triple $(\F^{\bullet,*},f,s)$, where $\F^{\bullet,*}$ is a $\Z$-graded simplicial sheaf of modules on $M_\bullet$, which is bounded from below\footnote{This means, there is an Integer $k$ such that each $\F^{p,q}=0$ if $q<k$.}, $f$ is a map of simplicial sheaves, which increases the $\Z$-grading by one and $s$ is a simplicial zero homotopy of $f^2$, i.e., in simplicial degree $p$, $s=(s_i)_{i=0,\dots,p-1}$, where \[s_i\from \sigma_i\invers \F^{p,q}\to \F^{p-1,q+1},\quad i=0,\dots,p-1\]
are maps of sheaves, such that the simplicial relations of degeneracy maps hold, $s$ commutes with $f$ and 
\begin{align*}
    s_p\circ\tilde{\del}_{p+1}&=-f^2\from \left(\sigma_p\invers\left(\del_{p+1}\invers\F^{p,q}\right)\right)=\F^{p,q}\to \F^{p,q+1}\\
	s_i\circ\tilde{\del}_j&=\begin{cases}\tilde{\del}_{j}\circ s_{i-1}&\text{if } i< j\\
	                            \tilde{\del}_{j-1}\circ s_{i}&\text{if } i>j+1
	                        \end{cases}\\
	s_j\circ\tilde{\del}_j=s_j\circ\tilde{\del}_{j+1}
	s_0\circ\tilde{\del}_0&=0.
\end{align*}
A \emph{morphism} of simplicial sheaf homotopy cochain complex is a map of the simplicial sheaves, which respects the grading and commutes with both the `boundary map' $f$ and the zero homotopy.
\end{defn}
\begin{defn2}
    Let $w\from (\F^{\bullet,*},f,s)\to(\widetilde{\F}^{\bullet,*},\tilde{f},\tilde{s})$ be a morphism of simplicial homotopy cochain complex. The \emphind{cone} of $w$ is the simplicial sheaf homotopy cochain complex
\[\Cone(w)\defeq\left(\left(\F^{\bullet,k+1}\oplus\widetilde{\F}^{\bullet,k}\right)_{k\in\Natural},\begin{pmatrix}-f&-w\\0&\tilde f\end{pmatrix},\begin{pmatrix}s&0\\0&\tilde s\end{pmatrix} \right).\]
\end{defn2}
\begin{proof}
 The only point, which is worth to check, is the relation between the `boundary map' and the homotopy:
\begin{multline*}
    -\begin{pmatrix}-f&-w\\0&\tilde f\end{pmatrix}^2=-\begin{pmatrix}f^2&fw-w\tilde f\\0&\tilde f^2\end{pmatrix}=\begin{pmatrix}-f^2&0\\0&-\tilde f^2\end{pmatrix}\\=\begin{pmatrix}s\del+\del s&0\\0&\tilde s\del+\del\tilde s\end{pmatrix}=\begin{pmatrix}s&0\\0&\tilde s\end{pmatrix}\del+\del\begin{pmatrix}s&0\\0&\tilde s\end{pmatrix}.
\end{multline*}
\end{proof}
We are now going to define the cohomology of a simplicial sheaf homotopy cochain complex $(\F^{\bullet,*},f,s)$ using a \Cech{} model. Let $\mathcal U^\bullet$ be a simplicial cover of the simplicial manifold $M_\bullet$. This defines for each $q$ a resolution of the simplicial sheaf $\F^{\bullet,q}$ (compare Section \ref{sec:simpcechcoho}) 
\[\check C^{\bullet,q,*}(\mathcal U^\bullet,\mathcal F^{\bullet,k})\]
with \Cech{} boundary map $\delta$. The properties of the simplicial cover imply, that $\del$ and $s$ restrict to the \Cech{} groups. Hence, on the total complex of this triple graded collection of modules, we have a boundary map
\[\left(\bigoplus_{p+q+r=n} \check C^{p,q,r},\del+s+(-1)^p f+(-1)^{p+q}\delta\right),\] where $\del$ and $s$ are the alternating sums over the maps $\tilde\del_i$ and $s_i$ respectively.

Thus we can define $\check H(\mathcal U^\bullet,(\F^{\bullet,*},f,s))$ to be the cohomology of this cochain complex. As for classical \Cech{} cohomology, refinements of the simplicial cover induce homomorphisms of the associated cohomology theories. Thus we define
\[\check H(M_\bullet,(\F^{\bullet,*},f,s))=\lim_{\mathcal U^\bullet}\check H(\mathcal U^\bullet,(\F^{\bullet,*},f,s))\]
to be the limit over all refinements of open covers.

If the simplicial sheaf homotopy cochain complex $(\F^{\bullet,*},f,s)=(\F^{\bullet,*},d,0)$ actually is a cochain complex of simplicial sheaves, the total complex of the \Cech{} resolution of both types (compare Section \ref{sec:simpcechcoho}) coincides, and hence the cohomology defined here, coincides with the simplicial sheaf cohomology. Moreover, if the sheaves of $(\F^{\bullet,*},f,s)$ are fine, then the \Cech{} direction contracts by the standard argument and the cohomology of $(\F^{\bullet,*},f,s)$ is the cohomology of the total complex $(\oplus_{p+q=n}\F^{p,q}(G^p\times M),(-1)^p f+s+\del)$.
\\[2ex]
Now, turn to our specific case, i.e., we would like to find a simplicial sheaf homotopy cochain complex $\mathcal C^{\bullet}=\mathcal C^{\bullet,*}$ consisting of fine sheaves, such that its global sections are given by $C^\bullet(G,S^*(\g\dual)\otimes \Omega^*(M))$. 

The map $\pi\from G^\bullet\times M \to \{e\}^\bullet\times M, (g_1,\dots,g_p,m)\mapsto (g_1\dots g_pm)$ is a morphism of simplicial manifolds. $S^*(\g\dual)\otimes \Lambda^* T\dual\! M$ is a bundle over $M$, with left action of $G$ on $M$, the induced action on the cotangent bundle and coadjoint action on the polynomial, whose global sections are $S^*(\g\dual)\otimes \Omega^*(M)$. We can interpret this bundle as simplicial bundle on the simplicial manifold $\{e\}^\bullet\times M$, with all face and degeneracy maps being the identity. The global sections of the pullback bundle $\pi^*(S^*(\g\dual)\otimes \Lambda^* T\dual\! M)$ in simplicial level $p$ are $C^p(G,S^*(\g\dual)\otimes \Omega^*(M))$, thus take for $U\subset G^p\times M$ open
\[\mathcal C^p(U)\defeq\Gamma(U,(\pi^*(S^*(\g\dual)\otimes \Lambda^* T\dual\! M)_p).\]
This is a sheaf of $C^\infty(G^p\times M)$-modules, hence fine. The morphism between the simplicial levels $\tilde\del_i:\del_i^{-1} \mathcal{C}^p\to \mathcal{C}^{p+1}$ and $\tilde{\sigma}_i:\sigma_i^{-1} \mathcal{C}^{p}\to \mathcal{C}^{p-1}$ are given by pullback along the simplicial bundle maps.

The map $d+\iota:\mathcal{C}^{\bullet,l}\to \mathcal{C}^{\bullet,l+1}$ increases the second grading and is clearly a map of sheaves, as booth operations are local. The maps $\bar d$ and $\bar \iota$ operate between different simplicial levels: On global sections $\bar d$ is the alternating sum of the maps $\tilde\del_i$, while $\bar\iota$ 
\[\bar{\iota} \from C^k(G,S^l(\g\dual)\otimes \Omega^m(M))\to C^{k-1}(G,S^{l+1}(\g\dual)\otimes \Omega^m(M))\]
is given by the formula $\bar\iota =\sum_{i=0}^{k-1}(-1)^i \bar\iota_i$, where each $\bar\iota_i$ is the map of sheaves
\begin{align*}
\bar{\iota}_i\from \sigma_i^{-1} \mathcal{C}^{k}&\to \mathcal{C}^{k-1}\\
(\bar{\iota}_i f)(g_1,\dots,g_{k-1}|X)&=\ddt f(g_1,\dots,g_i,\exp(tX_i),g_{i+1},\dots,g_{k-1}|X), 
\end{align*}
with $X_i=\Ad(g_{i+1}\dots g_{k-1})X$. 

From the discussion of the maps $d+\iota$, $\bar\iota$ and $\bar d$ in \autoref{sec:Getzler} one obtains
that \[\left(\mathcal C^{\bullet,*},d+\iota,\bar\iota\right)\] is a simplicial sheaf homotopy cochain complex.

$\mathcal C^{\bullet,0}$ is the simplicial sheaf of smooth functions, in which the simplicial sheaf $\underline\Z$ injects. This induces a map of simplicial sheaf homotopy cochain complexes 
\[(\underline\Z,0,0)\to \left(\mathcal C^{\bullet,*},d+\iota,\bar\iota\right),\]where $\underline\Z$ is located in degree zero. With respect to this injection, we define

\[\D_C(n)_{G^\bullet\times M}=\Cone(\underline\Integer\oplus\mathcal \mathcal{C}^{\bullet,\geq n}\to\mathcal{C}^{\bullet,*},(z,\omega)\mapsto \omega-z)[-1].\]
\symindex[d]{\D_C(n)_{G^\bullet\times M}}

\begin{defn}
 Let $G$ be a Lie group acting on a smooth manifold M. The \emph{full $G$-equivariant differential cohomology} \index{full equivariant differential cohomology}\symindex[h]{\hat{\mathbb{H}}^*_G(M)} of $M$ is defined to be the cohomology of simplicial sheaf homotopy cochain complexes $\D_C(n)$:
\[\fatH^n_G(M)\defeq H^n(G^\bullet\times M, \D_C(n)_{G^\bullet\times M}).\]
\end{defn}

\begin{theorem}\label{thm:FullHexagon}
 If $G$ is a compact group, one has the following hexagon
\begin{equation} \label{eq:CartanHexagon}
\begin{tikzcd}
\phantom{.} &\hspace{-1em}\faktor{\Omega^{n-1}_G(M)}{(d+\iota)} \arrow{dr}{a} \arrow{rr}{d+\iota}& & \Omega^n_G(M)_\cl\arrow{dr}{}&\\
H_G^{n-1}(M,\C)\arrow{ru}{}\arrow{dr}{}&&\hspace{-.5em}\fatH_G^n(M,\Z) \hspace{-.5em}\arrow{ru}{R}\arrow[two heads]{dr}{I}&&\hspace{-1em}H_G^n(M,\C)\\
&\hspace{-3em}H_G^{n-1}(M,\C/\Z) \arrow[hook]{ur}{} \arrow{rr}{-\beta}&&H_G^n(M,\Z) \arrow{ur}{}&
\end{tikzcd}
\end{equation}
where the line along top, the one along the bottom and the diagonals are exact.
\end{theorem}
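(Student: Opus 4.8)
The plan is to run, essentially verbatim, the argument that produced Theorems~\ref{thm:finitehexagon} and~\ref{thm:GomiHexagon1}, now carried out inside the category of simplicial sheaf homotopy cochain complexes developed above and with the Cartan model in the role that the truncated de Rham complex played before. Since $\D_C(n)$ is by construction a cone, we have the two short exact sequences
\begin{align*}
0\to\Cone(\mathcal{C}^{\bullet,\geq n}\overset{\iota}{\to}\mathcal{C}^{\bullet,*})[-1]\overset{a}{\to}&\;\D_C(n)\overset{I}{\to}\underline{\Integer}\to 0,\\
0\to\Cone(\underline{\Integer}\overset{-\iota}{\to}\mathcal{C}^{\bullet,*})[-1]\to&\;\D_C(n)\overset{R}{\to}\mathcal{C}^{\bullet,\geq n}\to 0
\end{align*}
of simplicial sheaf homotopy cochain complexes, together with the exact triangle $\D_C(n)\to\underline{\Integer}\oplus\mathcal{C}^{\bullet,\geq n}\to\mathcal{C}^{\bullet,*}\to\D_C(n)[1]$, entirely parallel to \eqref{eq:Lieses1}, \eqref{eq:Lieses2} and the triangle yielding \eqref{eq:les}. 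Passing to cohomology gives three long exact sequences, out of which \eqref{eq:CartanHexagon} will be assembled once the six corner groups and the composite $R\circ a$ have been identified.

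Four of the corners are the standard ones. Theorem~\ref{thm:realiso} gives $H^n(G^\bullet\times M,\underline{\Integer})=H^n_G(M,\Z)$. The simplicial sheaves $\mathcal{C}^{\bullet,*}$ are fine, so $H^n(G^\bullet\times M,\mathcal{C}^{\bullet,*})$ is the cohomology of the total complex of Getzler's bigraded module $C^\bullet(G,S^*(\g\dual)\otimes\Omega^*(M))$, which by the quasi-isomorphism $\mathcal{J}$ of Definition~\ref{def:Getzlermap} equals $H^n_G(M,\C)$; under this identification the map induced by $\underline{\Integer}\hookrightarrow\mathcal{C}^{\bullet,0}$ is the coefficient homomorphism $H^*_G(M,\Z)\to H^*_G(M,\C)$. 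Comparing the long exact sequence of $\Cone(\underline{\Integer}\overset{-\iota}{\to}\mathcal{C}^{\bullet,*})$ with the Bockstein sequence of $0\to\Z\to\C\to\C/\Z\to0$ and invoking the five lemma then identifies $H^n(G^\bullet\times M,\Cone(\underline{\Integer}\overset{-\iota}{\to}\mathcal{C}^{\bullet,*})[-1])$ with $H^{n-1}_G(M,\C/\Z)$, exactly as in the finite-group case.

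The two remaining corners carry the new geometric content. For $\mathcal{C}^{\bullet,\geq n}$, pass to the total complex of global sections: a cocycle of total degree $n$ must sit in simplicial level $p=0$ and Cartan degree exactly $n$, where $\bar d$-closedness means precisely $G$-invariance (recall $\ker(\bar d\colon C^0\to C^1)=\Omega_G^*(M)$) and $(d+\iota)$-closedness is closedness in the Cartan complex, and there are no coboundaries in that degree; hence $H^n(G^\bullet\times M,\mathcal{C}^{\bullet,\geq n})=\Omega^n_G(M)_\cl$. Moreover $\mathcal{C}^{\bullet,\geq n}$ vanishes in total degrees $<n$, so $H^k(G^\bullet\times M,\mathcal{C}^{\bullet,\geq n})=0$ for $k<n$, and this is what will make $H^{n-1}_G(M,\C/\Z)\to\fatH^n_G(M,\Z)$ injective (the hook in \eqref{eq:CartanHexagon}, in contrast to \eqref{LieHexagon}). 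For the cone in the first sequence, the brutal-truncation sequence $0\to\mathcal{C}^{\bullet,\geq n}\to\mathcal{C}^{\bullet,*}\to\mathcal{C}^{\bullet,<n}\to0$ gives a quasi-isomorphism $\Cone(\mathcal{C}^{\bullet,\geq n}\overset{\iota}{\to}\mathcal{C}^{\bullet,*})[-1]\simeq\mathcal{C}^{\bullet,<n}[-1]$; here compactness of $G$ enters, through the integration-over-$G$ contraction of Lemma~\ref{lem:IntGroupGetzler}, which kills the $\bar d$-direction, so that by the spectral-sequence argument of Lemma~\ref{lemma:specseq} the inclusion of the $G$-invariant part is a quasi-isomorphism onto the truncated Cartan complex $\Omega^0_G(M)\to\cdots\to\Omega^{n-1}_G(M)\to0$. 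This yields $H^n(G^\bullet\times M,\Cone(\mathcal{C}^{\bullet,\geq n}\overset{\iota}{\to}\mathcal{C}^{\bullet,*})[-1])=\faktor{\Omega^{n-1}_G(M)}{(d+\iota)\Omega^{n-2}_G(M)}$ and vanishing in degree $n+1$, parallel to Lemma~\ref{lemma:LieForms} and Proposition~\ref{pro:f1niso}.

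It remains to compute $R\circ a$ and to read off exactness. Unwinding the quasi-isomorphism $\mathcal{C}^{\bullet,<n}[-1]\simeq\Cone(\mathcal{C}^{\bullet,\geq n}\overset{\iota}{\to}\mathcal{C}^{\bullet,*})[-1]$, a class represented by $\eta\in\Omega^{n-1}_G(M)$ is sent, in total degree $n$, to $((d+\iota)\eta,\eta)$, and since $R$ is the projection onto the $\mathcal{C}^{\bullet,\geq n}$-summand one obtains $R\circ a=(d+\iota)$, exactly as in the lemmas preceding Theorems~\ref{thm:finitehexagon} and~\ref{thm:GomiHexagon1}. Exactness of the top row, the bottom row and the two diagonals of \eqref{eq:CartanHexagon} is then the content of the three long exact sequences, with surjectivity of $I$ following as usual from the liftability of integral classes. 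I expect the real work — and the only genuine obstacle — to be technical rather than conceptual: because $\mathcal{C}^{\bullet,*}$ is not a cochain complex of sheaves but carries the extra homotopy $\bar\iota$, the cone, the truncations $\mathcal{C}^{\bullet,\geq n}$ and $\mathcal{C}^{\bullet,<n}$, the quasi-isomorphisms relating them, and the passage from a short exact sequence to a long exact cohomology sequence must all be re-established within the framework of simplicial sheaf homotopy cochain complexes and their \Cech/total-complex cohomology, and one must verify that the group-integration contraction and the double-complex comparison of Lemma~\ref{lemma:specseq} are compatible with these truncations so that they apply on the nose.
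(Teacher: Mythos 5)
Your proposal is correct and follows essentially the same route as the paper, whose own proof consists of exactly this observation: repeat the cone/long-exact-sequence arguments of the preceding sections, using that for compact $G$ the Getzler resolution contracts onto the Cartan complex via the integration map of Lemma~\ref{lem:IntGroupGetzler}. Your expanded identification of the two new corners and of $R\circ a=d+\iota$, together with the flagged technical point about re-establishing cones, truncations and long exact sequences for simplicial sheaf homotopy cochain complexes, is precisely the content the paper leaves implicit.
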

\begin{proof}
 This follows by the same arguments as in the last section, and the fact, that for compact Lie groups, the Getzler resolution contracts to the Cartan complex (see Section \ref{sec:Getzler}).
\end{proof}
\begin{example}
 Let $M={pt}$ be a point, then the hexagon \eqref{eq:CartanHexagon} reduces in even degrees to
\[\begin{scaledcd}{.9}
\phantom{.} &0\arrow{dr}{a}\arrow{rr}{d+\iota}& &\left(S^n(\g\dual)\right)^G\arrow{dr}{}&\\
0\arrow{ru}{}\arrow{dr}{}&&\fatH_G^{2n}(pt,\Z)\arrow{ru}{R}\arrow[two heads]{dr}{I}&&\hspace{-1em}H^{2n}(BG,\C)\\
&H^{n-1}(BG,\C/\Z) \arrow[hook]{ur}{} \arrow{rr}{-\beta}&&H^{2n}(BG,\Z) \arrow{ur}{}&
\end{scaledcd}\]
and in odd degrees to
\[
\begin{scaledcd}{.9}
\phantom{.} &\left(S^n(\g\dual)\right)^G\arrow{dr}{a}\arrow{rr}{d+\iota}& &\phantom{I}\!0\phantom{I}\arrow{dr}{}&\\
H^{2n}(BG,\C)\arrow{ru}{}\arrow{dr}{}&&\fatH_G^{2n+1}\!(pt,\Z) \arrow{ru}{R}\arrow[two heads]{dr}{I}&&0\\
&\hspace{-2em}H^{2n}(BG,\C/\Z)\arrow[hook]{ur}{}\arrow{rr}{-\beta}&&H^{2n+1}(BG,\Z) \arrow{ur}{}&
\end{scaledcd}
\]
Hence 
\[\fatH_G^{n}(pt,\Z)=\begin{cases}
                             H^n(BG,\Z)&\text{ if } n \text{ is even}\\
                             H^{n-1}(BG,\C/\Z)&\text{ if } n \text{ is odd}.
                            \end{cases}\]
\end{example}
The contravariant functor $\fatH_G$ assigning an abelian group to the $G$-manifold $M$ is not homotopy invariant, but 
its deviation from homotopy invariance is measured by the homotopy formula. 
\begin{lemma}Let $i_t\from M \to [0,1] \times M$ be the 
inclusion determined by $t \in [0, 1]$ and let $G$ act trivially on the interval.
Let $\omega\in (S^*(\g\dual)\otimes\Omega^*([0,1]\times M))^{n})^G$
    \[(d_M+\iota)\left(\int_{[0,1]\times M/M} \omega\right)= i_1^*\omega-i_0^*\omega+\int_{[0,1]\times M/M} (d_M+\iota)\omega \]
\end{lemma}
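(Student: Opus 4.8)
The plan is to treat this as the Cartan-model analogue of the classical Stokes formula for fibre integration over the interval: it is exactly the differential-form input needed to upgrade the homotopy formula for $\hat H_G$ established earlier to one for $\fatH_G$. Concretely, I would decompose an equivariant form on $[0,1]\times M$ along the product, isolate the genuinely new content as the fundamental theorem of calculus, and check that the two constituents $d_M$ and $\iota_{X^\sharp}$ of the equivariant differential slide past the fibre integral $\int_{[0,1]\times M/M}$ up to the usual Koszul signs.

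In detail: since $G$ acts trivially on the first factor, every $\omega\in\left(S^*(\g\dual)\otimes\Omega^*([0,1]\times M)\right)^G$ can be written uniquely as
\[
\omega(X)=dt\wedge\alpha(X)+\beta(X),\qquad \iota_{\del_t}\alpha(X)=\iota_{\del_t}\beta(X)=0,
\]
where $X\mapsto\alpha(X)$ and $X\mapsto\beta(X)$ are $G$-equivariant polynomial maps $\g\to\Omega^*([0,1]\times M)$ with values in forms having no $dt$-component, i.e. $t$-families of forms on $M$, and by definition $\left(\int_{[0,1]\times M/M}\omega\right)(X)=\int_0^1\alpha(X)\,dt$. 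Splitting the exterior derivative on the product as $d_{[0,1]\times M}=dt\wedge\del_t+d_M$, the piece $dt\wedge\del_t$ contributes, after fibre integration, exactly $\int_0^1\del_t\beta(X)\,dt=i_1^*\omega(X)-i_0^*\omega(X)$ by the fundamental theorem of calculus (using $i_t^*\,dt=0$). For the remaining piece $d_M$ and for the contraction $\iota_{X^\sharp}$ one uses that, because the action on $[0,1]$ is trivial, the fundamental vector field of $X$ on $[0,1]\times M$ is simply $X^\sharp$ on $M$ and has no $\del_t$-component; hence $\iota_{X^\sharp}\,dt=0$, and both $d_M$ and $\iota_{X^\sharp}$ commute with $t\mapsto\int_0^1(\,\cdot\,)\,dt$ up to the sign produced by commuting them past the factor $dt$. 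This is the same mechanism by which the fibre integral of a pulled-back form vanishes in the earlier $\hat H_G$ homotopy formula. Feeding these three observations into $\omega=dt\wedge\alpha+\beta$ and collecting terms yields the asserted identity.

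The only genuine content is the fundamental theorem of calculus producing the boundary terms $i_1^*\omega-i_0^*\omega$; everything else is formal Cartan calculus, and by linearity it suffices to verify the identity on $\omega$ of fixed bidegree (polynomial degree $k$, form degree $n-2k$) and separately on summands of the form $dt\wedge(\,\cdot\,)$ and on summands with no $dt$. The one point that requires care — and the natural place for a sign slip — is bookkeeping the various signs: the orientation of $[0,1]$, the sign built into the definition of $\int_{[0,1]\times M/M}$, the Koszul signs from moving $d_M$ and $\iota_{X^\sharp}$ past $dt$, and the grading convention (``twice the polynomial degree plus the form degree'') on the Cartan complex. I would fix the fibre-integration convention to match the one already used in the proof of the $\hat H_G$ homotopy formula, so that the two homotopy formulas are mutually compatible, after which the remaining verification is a short direct computation.
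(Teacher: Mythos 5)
Your proposal is correct and follows essentially the same route as the paper, whose entire proof is the one-line remark that in local coordinates this is just differentiation of the integral with respect to the bounds and under the integral sign; your decomposition $\omega=dt\wedge\alpha+\beta$, the fundamental theorem of calculus for the boundary terms, and the observation that $d_M$ and $\iota_{X^\sharp}$ pass through the fibre integral (since $X^\sharp$ has no $\del_t$-component) is precisely the spelled-out version of that. You are in fact more careful than the paper about the sign conventions for $\int_{[0,1]\times M/M}$, which the paper leaves implicit.
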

\begin{proof}
    Going to local coordinates (using a partition of unity), this is the derivative of the integral by the lower bound, 
the upper bound and the interior derivative.
\end{proof}
\begin{prop}\label{prop:hominv}
    If $\hat x\in\fatH_G^n([0,1]\times M,\Z)$, then 
\[i_1^*\hat x-i_0^*\hat x=a\left(\int_{[0,1]\times M/M} R(\hat x)\right),\]
where we have kept the notions of the previous lemma.
\end{prop}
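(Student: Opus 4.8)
The plan is to run the non-equivariant homotopy argument of \cite[Prop.~3.28]{Bunke} — the same one transcribed for finite groups in the proof of the analogous proposition above — this time using the hexagon \eqref{eq:CartanHexagon} of \autoref{thm:FullHexagon} and the fiber-integration formula of the preceding lemma in place of their non-equivariant counterparts. First I would exploit homotopy invariance of equivariant integral cohomology: since $\pr_M\circ i_t=\id_M$, the class $I(\hat x)\in H^n_G([0,1]\times M,\Z)$ has the form $\pr_M^*y$ for a unique $y\in H^n_G(M,\Z)$. Because $I\from\fatH^n_G(M,\Z)\to H^n_G(M,\Z)$ is surjective, pick a lift $\hat y\in\fatH^n_G(M,\Z)$ with $I(\hat y)=y$. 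Then $I(\hat x-\pr_M^*\hat y)=0$, so by exactness of the corresponding diagonal of the hexagon ($\ker I=\im a$) there is an equivariant form $\omega\in\bigl((S^*(\g\dual)\otimes\Omega^*([0,1]\times M))^{n-1}\bigr)^G$ with $\hat x=\pr_M^*\hat y+a(\omega)$; and since the hexagon commutes ($R\circ a=d_M+\iota$), one gets $R(\hat x)=\pr_M^*R(\hat y)+(d_M+\iota)\omega$.

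Next I would pull back by $i_0$ and $i_1$. Both $a$ and $R$ are natural with respect to the pullbacks induced by the simplicial maps $G^\bullet\times i_t$, as they arise from morphisms of cone complexes of simplicial sheaf homotopy cochain complexes defining $\D_C(n)$; moreover $i_t^*\circ\pr_M^*=(\pr_M\circ i_t)^*=\id$, so $i_1^*\pr_M^*\hat y=i_0^*\pr_M^*\hat y$. Hence
\[i_1^*\hat x-i_0^*\hat x=a(i_1^*\omega)-a(i_0^*\omega)=a\bigl(i_1^*\omega-i_0^*\omega\bigr).\]
By the preceding lemma, $i_1^*\omega-i_0^*\omega$ is the sum of a $(d_M+\iota)$-exact equivariant form and (up to the sign fixed there) $\int_{[0,1]\times M/M}(d_M+\iota)\omega$; as $a$ factors through the quotient by $(d_M+\iota)$-exact forms, the exact summand drops out. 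Finally $\int_{[0,1]\times M/M}\pr_M^*R(\hat y)=0$, because a form pulled back from $M$ has no $[0,1]$-component, so $\int_{[0,1]\times M/M}(d_M+\iota)\omega=\int_{[0,1]\times M/M}R(\hat x)$ and the formula follows.

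I do not anticipate a conceptual obstacle, since the argument is essentially a word-for-word adaptation of the non-equivariant case. The two places that demand actual care are the naturality of $a$ and $R$ under the maps $i_t^*$ — which has to be read off from the functoriality of the cone construction for simplicial sheaf homotopy cochain complexes — and the tracking of signs, so that the final identity appears with exactly the sign claimed; the genuinely new ingredient, namely fiber integration of Cartan forms together with its compatibility with $d_M+\iota$, is already supplied by the preceding lemma.
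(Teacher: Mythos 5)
Your proposal follows the paper's argument step for step through the decomposition $\hat x=\pr_M^*\hat y+a(\omega)$, the naturality of $a$ under $i_t^*$, and the reduction to $a(i_1^*\omega-i_0^*\omega)$; up to that point it coincides with the paper's proof. The gap is in the endgame, where two different operators are conflated. The curvature relation along the top of the hexagon reads $R(a(\omega))=(d+\iota)\omega$ with $d=d_{[0,1]}+d_M$ the \emph{full} exterior derivative of $[0,1]\times M$, whereas the preceding lemma only controls $\int_{[0,1]\times M/M}(d_M+\iota)\omega$ with $d_M$ the $M$-component. Your step ``$\int_{[0,1]\times M/M}(d_M+\iota)\omega=\int_{[0,1]\times M/M}R(\hat x)$'' therefore silently discards $\int_{[0,1]\times M/M}d_{[0,1]}\omega$. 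Writing $\omega=dt\wedge\alpha+\beta$ as in the paper, that discarded term equals $i_1^*\beta-i_0^*\beta=i_1^*\omega-i_0^*\omega$, i.e.\ it is precisely the non-exact term that produces the left-hand side of the claimed identity; it is neither zero nor $(d+\iota)$-exact, so it cannot be absorbed into the quotient that $a$ factors through. (Your justification that a form pulled back from $M$ has no $[0,1]$-component only kills $\int\pr_M^*R(\hat y)$, not $\int d_{[0,1]}\omega$.) This is not merely the sign issue you defer: with the lemma as stated, the rearrangement gives $i_1^*\omega-i_0^*\omega=(d_M+\iota)\int\omega-\int(d_M+\iota)\omega$, so even granting your identification you would land on $-a\left(\int R(\hat x)\right)$.

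The repair is what the paper actually does: split $\omega=dt\wedge\alpha+\beta$ and $d=d_M+d_{[0,1]}$, observe that $\int(d_M+\iota)\beta=0$ and $\int d_{[0,1]}(dt\wedge\alpha)=0$, apply the lemma only to $dt\wedge\alpha$ (where it yields a $(d+\iota)$-exact form on $M$ plus $(i_0^*-i_1^*)(dt\wedge\alpha)=0$), and compute $\int d_{[0,1]}\beta=i_1^*\beta-i_0^*\beta$ directly; both sides of the asserted identity then equal $a(i_1^*\beta-i_0^*\beta)$. Alternatively, you could first establish the Stokes formula with the full differential inside the fiber integral, $i_1^*\omega-i_0^*\omega=(d+\iota)\int\omega+\int(d+\iota)\omega$, which would make your one-line finish legitimate with the correct sign --- but that is a different statement from the lemma the paper proves, and it would need its own proof.
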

\begin{proof}
    As equivariant integral cohomology is homotopy invariant, there is a class $y\in H^n(M,\Z)$, such that 
$p_M^*y=I(\hat x)$. As $I$ is surjective, choose a lift $\hat y\in \fatH_G^n(\times M;\Z)$ with $I(\hat y)=y$. Thus 
$I(p_M^*\hat y-\hat x)= 0$ and hence $\hat x=p_M^*\hat y+a(\omega)$ for some 
$\omega\in(S^*(\g\dual)\otimes\Omega^*([0,1]\times M))^{n-1})^G$. Therefore $(d+\iota)\omega=R(a(\omega))=R(\hat x)-R(p_M^*\hat y)$. We 
can write $\omega=dt\wedge \alpha+\beta$, where $dt$ corresponds to the interval and $\alpha,\beta$ are forms on $p_M^* TM$
On the one hand
\[i_1^*\hat x-i_0^*\hat x=a\left(i_1^*\omega-i_0^*\omega\right)=a\left(i_1^*\beta-i_0^*\beta\right).\]
On the other hand
\begin{align*}
a\left(\int_{[0,1]\times M/M} R(\hat x)\right)
&=a\left(\int_{[0,1]\times M/M} R(\hat x)-p_M^*R(\hat y)\right),\\ \intertext{and, as fiber integrals over basic forms vanish,}
&=a\left(\int_{[0,1]\times M/M} (d+\iota)\omega\right)\\
&=a\left(\int_{[0,1]\times M/M} (d_M+\iota)\omega\right)+a\left(\int_{[0,1]\times M/M} d_{[0,1]}\omega\right)\\
&=a\left(\int_{[0,1]\times M/M} (d_M+\iota)dt\wedge \alpha\right)+a\left(\int_{[0,1]\times M/M} d_{[0,1]}\beta\right)\\
&=a\left((i_0^*-i_1^*)dt\wedge \alpha+(d_M+\iota)\left(\int_{[0,1]\times M/M} dt\wedge 
\alpha\right)\right)\\&\hspace{3em}+a\left((i_1^*-i_0^*)\beta\right)\\
&=a\left(i_1^*\beta-i_0^*\beta\right).
\end{align*}
In the last step we use that $a$ vanishes on exact forms.
\end{proof}
To compare our definition with the construction in the last section, we define a subsheaf $\F^1_n\mathcal C^{\bullet,*}\subset \mathcal C^{\bullet,*}$. In the bundle $S^*(\g\dual)\otimes \Lambda^* (T\dual\! M)$, we have the subbundle \[S^{\geq 1}(\g\dual)\otimes \Lambda^{<n} (T\dual\! M)+\left(S^*(\g\dual)\otimes \Lambda^* (T\dual\! M)\right)^{\geq n}.\] $\F^1_n\mathcal C^{\bullet,*}$ is defined to be the sheaf of sections of (the pullback to the simplicial manifold of) this bundle. As one checks immediately
\[\F^1_n\mathcal C^{0,n-1}(M)=\left(\bigoplus_{k=1}^{n/2} S^k(\g\dual)\otimes \Omega^{n-1-2k}(M)\right),\] i.e., the space, whose $G$-invariant part is known from Proposition \ref{pro:f1niso}.
\begin{lemma}
    The image of $\F^1_n\Omega^{\bullet,*}$ under the Getzler map $\mathcal J\from \Omega^{\bullet,*}\to \mathcal C^{\bullet,*}$, defined in section \ref{sec:quasiiso}, lies in $\F^1_n\mathcal C^{\bullet,*}$.
\end{lemma}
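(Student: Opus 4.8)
The plan is to reduce everything to a degree count in the explicit formula for $\mathcal J$ from Definition \ref{def:Getzlermap}. First I would record what the target filtration looks like bundle-theoretically. The subbundle defining $\F^1_n\mathcal C^{\bullet,*}$, namely $S^{\geq 1}(\g\dual)\otimes\Lambda^{<n}(T\dual M)+(S^*(\g\dual)\otimes\Lambda^*(T\dual M))^{\geq n}$, contains the \emph{entire} positive-polynomial-degree part $S^{\geq 1}(\g\dual)\otimes\Lambda^*(T\dual M)$: if the form degree is $<n$ this is visibly the first summand, and if the form degree is $\geq n$ then the total degree is $\geq n$, so it lies in the second summand. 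Its polynomial-degree-zero part, on the other hand, is exactly $\Lambda^{\geq n}(T\dual M)$. Hence the only thing that needs checking is that the polynomial-degree-zero part of $\mathcal J(\omega)$, at every simplicial level, is a form of degree $\geq n$ on $M$.

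Next I would locate the polynomial-degree-zero part of $\mathcal J(\omega)$. In the formula for $\mathcal J(\omega)(g_1,\dots,g_l|X)$ one contracts $\omega$ with the $p-l$ vectors $X^{(\pi)}_{l+1},\dots,X^{(\pi)}_{p}$, each depending linearly on $X$, so $\mathcal J(\omega)(g_1,\dots,g_l|\cdot)$ is homogeneous of polynomial degree $p-l$. Therefore, for $\omega\in\Omega^{p,*}$, the polynomial-degree-zero part of $\mathcal J(\omega)$ sits entirely in simplicial level $l=p$, where the only shuffle is the identity, no contraction occurs, and $\mathcal J(\omega)(g_1,\dots,g_p|X)=i_{\mathrm{id}}^*\omega$ is the pullback of $\omega$ along the embedding $M\hookrightarrow G^p\times M$, $x\mapsto(g_1,\dots,g_p,x)$, taken through the bundle inclusion $TM\to T(G^p\times M)$ as in the definition. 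Concretely this pullback extracts precisely the component of $\omega$ of form degree $0$ on every copy of $G$, i.e. the part of $\omega$ living purely on the $M$-factor.

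Finally I would invoke the hypothesis $\omega\in\F^1_n\Omega^{p,m}$. If $p=0$ the defining condition forces $m\geq n$, and $\mathcal J$ is the identity in simplicial level $0$, so $\mathcal J(\omega)=\omega$ has form degree $\geq n$; if $p\geq 1$ and $m\geq n$ the $M$-only component of $\omega$ again has degree $m\geq n$; and if $p\geq 1$ and $m<n$ the condition says every monomial of $\omega$ carries at least one form degree on the $G$-part, so the $M$-only component of $\omega$ vanishes and $i_{\mathrm{id}}^*\omega=0$. In all three cases the polynomial-degree-zero part of $\mathcal J(\omega)$ lies in $\Lambda^{\geq n}(T\dual M)$, which together with the first paragraph gives $\mathcal J(\omega)\in\F^1_n\mathcal C^{\bullet,*}$. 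I do not expect a genuine obstacle here: the only point to be careful about is that the inclusion $i_\pi$ in the definition of $\mathcal J$ is taken through $TM\to T(G^p\times M)$, which is exactly what makes the level-$p$ term the $M$-directional restriction of $\omega$; everything else is bookkeeping of form and polynomial degrees.
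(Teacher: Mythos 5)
Your argument is correct and is essentially the paper's own proof, just written out more explicitly: both reduce to the observation that the quotient $\mathcal C^{\bullet,*}/\F^1_n\mathcal C^{\bullet,*}$ sees only the polynomial-degree-zero part, which for $\mathcal J(\omega)$ lives in top simplicial level $l=p$ and equals the $M$-only component of $\omega$, and this component either has form degree $\geq n$ or vanishes by the defining condition of $\F^1_n\Omega^{\bullet,*}$. Your three-case bookkeeping (including the $p=0$ level) is a slightly more careful version of the paper's two-line case distinction, with no difference in substance.
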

\begin{proof}
   Let $U\subset G^p\times M$ an open set and $\omega\in \F^1_n\Omega^{p,k}(U)$. If $k\geq n$ there is nothing to show. Let $k<n$. The projection of the image of $\mathcal J (\omega)$ to $\mathcal C^{\bullet,*}(U)/\F^1_n\mathcal C^{\bullet,*}(U)$ is the part of $\mathcal J (\omega)$ whose polynomial degree is zero. This is zero, since the form degree of $\omega$ on the $G$ part is positive (by the condition $k<n$) and hence $\omega$ is mapped to zero in the quotient and hence to a positive degree polynomial.
\end{proof}
Let $\D_C(1,n)=\Cone(\underline\Integer\oplus\mathcal F^1_n\mathcal C^{\bullet,*}\to\mathcal C^{\bullet,*},(z,\omega,\eta)\mapsto \omega+\eta-z)[-1]$
\begin{lemma} \label{lem:isoGomiNew} The map of chain complexes of simplicial sheaves
\[\mathcal J_*\from\D_{Gomi}(n)_{G^\bullet\times M}\to \D_C(1,n)_{G^\bullet\times M}\]
induces an isomorphism $\hat H_G^*(M,\Z)\to H^*(G^\bullet\times M,\D_C(1,n))$
\end{lemma}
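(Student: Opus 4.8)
The plan is to exhibit $\mathcal J_*$ as a morphism of the two mapping-cone constructions and then to conclude with the five lemma; the only non-formal ingredient will be that Getzler's quasi-isomorphism $\mathcal J$ remains a quasi-isomorphism after restriction to the filtered subsheaves $\mathcal F^1_n$. All sheaves occurring in $\D_{Gomi}(n)$ and in $\D_C(1,n)$ are sheaves of $C^\infty$-modules, hence fine, so by the discussion in Section~\ref{sec:simpman} the cohomology $H^*(G^\bullet\times M,-)$ of each of the complexes in sight is the cohomology of the total complex of its global sections. By the compatibility Lemma of Section~\ref{sec:quasiiso}, $\mathcal J$ is a chain map from the total complex of $\Omega^{\bullet,*}$ to the total complex of $\mathcal C^{\bullet,*}$; it is the identity on the locally constant subsheaf $\underline\Z$, and by the Lemma proved immediately before the present one it carries $\mathcal F^1_n\Omega^{\bullet,*}$ into $\mathcal F^1_n\mathcal C^{\bullet,*}$. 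Since the cone of a (simplicial homotopy) cochain complex is built functorially out of the cone data, these three compatibilities assemble into the chain map $\mathcal J_*\colon\D_{Gomi}(n)\to\D_C(1,n)$ and present it as a morphism of the exact triangles
\[
\D_{Gomi}(n)\to \underline\Z\oplus\mathcal F^1_n\Omega^{\bullet,*}\to \Omega^{\bullet,*}\to \D_{Gomi}(n)[1],\qquad \D_C(1,n)\to \underline\Z\oplus\mathcal F^1_n\mathcal C^{\bullet,*}\to \mathcal C^{\bullet,*}\to \D_C(1,n)[1].
\]

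Applying the five lemma to the long exact cohomology sequences of these two triangles, $\mathcal J_*$ induces an isomorphism on $H^*(G^\bullet\times M,-)$ as soon as $\mathcal J$ induces one on $\Omega^{\bullet,*}$ and on $\mathcal F^1_n\Omega^{\bullet,*}$ (on $\underline\Z$ it is the identity). For $\Omega^{\bullet,*}$ this is \cite[Theorem 2.2.3]{Getzler}, the total complexes of global sections of $\Omega^{\bullet,*}$ and $\mathcal C^{\bullet,*}$ being $\Omega^*(G^\bullet\times M)$ and $C^\bullet(G,S^*(\g\dual)\otimes\Omega^*(M))$. For $\mathcal F^1_n\Omega^{\bullet,*}$ I would compare the short exact sequences
\[
0\to\mathcal F^1_n\Omega^{\bullet,*}\to\Omega^{\bullet,*}\to Q\to 0,\qquad 0\to\mathcal F^1_n\mathcal C^{\bullet,*}\to\mathcal C^{\bullet,*}\to Q'\to 0,
\]
between which $\mathcal J$ is a morphism. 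By the definition of $\mathcal F^1_n\Omega^{\bullet,*}$, the quotient $Q$ is in simplicial degree $p$ the sheaf of forms of total degree $<n$ having vanishing degree in all $G$-directions; by the definition of $\mathcal F^1_n\mathcal C^{\bullet,*}$, the quotient $Q'$ is the polynomial-degree-zero part of $\Omega^{<n}(M)$ (the operators $d_G$, $\iota$ and $\bar\iota$ each strictly raise one of the gradings cut down in $\mathcal F^1_n$, hence act as zero on the quotients). Thus $Q$ and $Q'$ are both canonically identified with the ordinary cochain complex of simplicial sheaves $C^\bullet(G,\Omega^{<n}(M))$ (simplicial structure induced from $G^\bullet\times M$, differential $d_M$), and since $\mathcal J$ is the identity on forms of vanishing degree in the $G$-directions (the remark after Definition~\ref{def:Getzlermap}; compare also the proof of Proposition~\ref{pro:f1niso}), the induced map $Q\to Q'$ is the identity, in particular a quasi-isomorphism. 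The five lemma applied to the long exact sequences of the two short exact sequences, together with \cite[Theorem 2.2.3]{Getzler}, then shows that $\mathcal J$ is a quasi-isomorphism on $\mathcal F^1_n\Omega^{\bullet,*}$, and the proof is complete.

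I expect the main obstacle to be the identification of the quotients $Q$ and $Q'$ in the second paragraph: one must verify with care that passing to the quotient collapses the simplicial-sheaf-homotopy-cochain-complex structure of $\mathcal C^{\bullet,*}$ (the differential $d+\iota$ together with the homotopy $\bar\iota$) to the ordinary cochain complex of simplicial sheaves $C^\bullet(G,\Omega^{<n}(M))$, with the same identification holding on the $\Omega$-side, so that $\mathcal J$ manifestly restricts to the identity there. Everything else is formal: the reduction of the cohomology to total complexes of global sections, the cone-compatibility of $\mathcal J$ supplied by the two preceding Lemmas, and two applications of the five lemma.
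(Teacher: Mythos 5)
Your argument is correct, but it is organized rather differently from the paper's. The paper disposes of this lemma in one sentence: ``the same arguments as given above'' show that $H^*(G^\bullet\times M,\D_C(1,n))$ sits in the hexagon \eqref{LieHexagon2} with the same corners as $\hat H_G^*(M,\Z)$, with $\mathcal J_*$ inducing the identity on each corner, and the five lemma finishes. Unpacked, that route leans on the explicit corner identifications already established --- in particular on Proposition \ref{pro:f1niso}, whose proof identifies both $H^n(G^\bullet\times M,\F^1_n\Omega^{\bullet,*})$ and the cohomology of the global sections of $\F^1_n\mathcal C^{\bullet,*}$ with the same Cartan-model quotient, and on the contraction given by integration over the group, so compactness of $G$ enters there. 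You instead run the five lemma twice, once on the defining exact triangles and once on the short exact sequences cut out by $\F^1_n$, reducing everything to Getzler's quasi-isomorphism theorem together with the observation that both quotients $Q$ and $Q'$ collapse to the polynomial-degree-zero, $G$-form-degree-zero complex $C^\bullet(G,\Omega^{<n}(M))$, on which $\mathcal J$ is manifestly the identity (the $l<p$ components of $\mathcal J$ involve contractions in $G$-directions and hence kill forms of vanishing $G$-degree). This d\'evissage never computes a corner and does not invoke the integration contraction; the point you flag as delicate --- that $d_G$, $\iota$ and $\bar\iota$ each raise a grading cut down in $\F^1_n$ and therefore die on the quotients --- does check out against the definitions. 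Two small caveats worth recording: $\underline\Z$ is not fine, so ``cohomology equals cohomology of global sections'' is not literally true for the cones themselves (your triangle argument sidesteps this by treating $\underline\Z$ separately, where $\mathcal J$ is the identity); and the sequences $0\to\F^1_n\mathcal C^{\bullet,*}\to\mathcal C^{\bullet,*}\to Q'\to 0$ are degreewise split as sheaves of modules, which is what guarantees the long exact sequence for simplicial sheaf homotopy cochain complexes that your second five-lemma application needs.
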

\begin{proof}
    The same arguments as given above show, that $H^*(G^\bullet\times M,\D_C(1,n))$ sits in the same hexagon \eqref{LieHexagon2}, as $\hat H_G^*(M,\Z)$ and the induced maps on all corners is the identity.
\end{proof}
We have an inclusion $\D_C(n)\to\D_C(1,n)$, which, composted with the isomorphism of Lemma \ref{lem:isoGomiNew}, induces a map
\[f\from\fatH_G^*(M,\Z)\to\hat H_G^*(M,\Z).\]
\begin{theorem}
    $f$ is an isomorphism in degree 0,1 and 2 and surjective in higher degrees.
\end{theorem}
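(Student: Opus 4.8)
The plan is to realise $f$ as the map in cohomology induced by an honest short exact sequence of coefficient complexes, and to identify the defect of $f$ with the hypercohomology of a very short complex.

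First I would observe that $\mathcal C^{\bullet,\geq n}$ is a subobject of $\F^1_n\mathcal C^{\bullet,*}$ in the category of simplicial sheaf homotopy cochain complexes: the total-degree-$\geq n$ part of the bundle $S^*(\g\dual)\otimes\Lambda^*(T^\vee M)$ is exactly the second summand defining $\F^1_n$, and all structure maps preserve it, since $d+\iota$ raises the total degree, $\bar d$ preserves the Cartan bidegree, and $\bar\iota$ raises the polynomial degree by one, hence the total degree by two. Consequently the inclusion $\D_C(n)\to\D_C(1,n)$ — the identity on the $\underline\Z$-summand and on the final $\mathcal C^{\bullet,*}$-summand of the cone, and the inclusion $\mathcal C^{\bullet,\geq n}\hookrightarrow\F^1_n\mathcal C^{\bullet,*}$ on the middle summand — is a termwise-injective morphism of simplicial sheaf homotopy cochain complexes, with quotient the complex of fine sheaves $Q_n\defeq\F^1_n\mathcal C^{\bullet,*}/\mathcal C^{\bullet,\geq n}$. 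Chasing bidegrees, the sections of $Q_n$ over $G^p\times M$ in Cartan internal degree $q$ are $C^p(G,\bigoplus_{k\geq1,\,2k+m=q}S^k(\g\dual)\otimes\Omega^m(M))$ for $q<n$, and $0$ for $q\geq n$; since $k\geq1$ forces $q\geq2$, the complex $Q_n$ is concentrated in internal degrees $2\leq q\leq n-1$, and in particular $Q_n=0$ whenever $n\leq2$.

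Next I would compute $H^\ast(G^\bullet\times M,Q_n)$. Since the sheaves of $Q_n$ are fine, this is the cohomology of the total complex of global sections $T^j=\bigoplus_{p+q=j}C^p(G,V_q)$, where $V_q=\bigoplus_{k\geq1,\,2k+m=q}S^k(\g\dual)\otimes\Omega^m(M)$, with differential $\bar d+\bar\iota+(-1)^p(d+\iota)$. Filtering $T^\ast$ by the Cartan internal degree $q$ — a filtration preserved by the differential, because $\bar\iota$ and $d+\iota$ strictly raise $q$ — gives a spectral sequence whose $E_0$-term splits as the sum over $q$ of the complexes $(C^\bullet(G,V_q),\bar d)$. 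For $G$ compact, Lemma \ref{lem:IntGroupGetzler} provides the contraction $\int_G$, so $\bar d$ is acyclic in positive simplicial degree; hence $E_1$ is concentrated in simplicial degree $0$ and equals $V_q^G$ sitting in total degree $q$. Therefore $H^j(G^\bullet\times M,Q_n)=0$ for $j\leq1$ and for $j\geq n$ (on the range in between one gets the truncated invariant Cartan complex with $d_1=d+\iota$, but this is not needed).

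Finally I would feed the short exact sequence $0\to\D_C(n)\to\D_C(1,n)\to Q_n\to0$ into the long exact hypercohomology sequence and extract the segment
\[H^{n-1}(G^\bullet\times M,Q_n)\longrightarrow\fatH_G^n(M,\Z)\xrightarrow{\,f\,}\hat H_G^n(M,\Z)\longrightarrow H^n(G^\bullet\times M,Q_n),\]
where Lemma \ref{lem:isoGomiNew} identifies $H^n(\D_C(1,n))$ with $\hat H_G^n(M,\Z)$ and shows the middle arrow is $f$. As the rightmost group vanishes for every $n$, the map $f$ is always surjective; as the leftmost group vanishes for $n\leq2$, where in fact $Q_n=0$ so that $\D_C(n)=\D_C(1,n)$, the map $f$ is injective there as well, hence an isomorphism in degrees $0,1,2$. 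The main obstacle is the middle step: one must make precise that $Q_n$ is genuinely a quotient at the level of simplicial sheaf homotopy cochain complexes, so that the cone and the long exact sequence are legitimate, and then justify the degeneration of the spectral sequence — which is exactly where compactness of $G$ enters, through the $\int_G$-contraction of the simplicial differential $\bar d$. Everything else is bookkeeping with the Cartan bidegree.
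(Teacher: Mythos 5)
Your proposal is correct, but it takes a genuinely different route from the paper. The paper's proof is a two-line diagram chase: it compares the two hexagons (Theorem \ref{thm:FullHexagon} and the hexagon \eqref{LieHexagon2}), notes that they literally coincide in degrees $0,1,2$ (whence $f$ is an isomorphism there by the five lemma), and that in higher degrees the bottom rows agree while the maps on the two differential-form corners along the top are surjective, so the four lemma gives surjectivity of $f$. You instead work at the level of coefficient complexes: you observe that $\mathcal C^{\bullet,\geq n}\subset \F^1_n\mathcal C^{\bullet,*}$ with quotient $Q_n$ concentrated in Cartan degrees $2\leq q\leq n-1$, feed the resulting short exact sequence $0\to\D_C(n)\to\D_C(1,n)\to Q_n\to 0$ into the long exact sequence, and kill $H^j(G^\bullet\times M,Q_n)$ for $j\geq n$ and $j\leq 1$ by the filtration-by-internal-degree spectral sequence whose $E_1$-page collapses to the invariants via the $\int_G$-contraction of $\bar d$ (this is where compactness enters, exactly as it does in the paper's route via the contraction of the Getzler resolution to the Cartan complex). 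Both arguments are sound; yours requires the routine but necessary verifications you flag (that the inclusion is a morphism of simplicial sheaf homotopy cochain complexes, that the sheaves are fine so the global-section sequence is exact and computes the cohomology). What your approach buys is strictly more information: the segment $H^{n-1}(G^\bullet\times M,Q_n)\to\fatH_G^n(M,\Z)\to\hat H_G^n(M,\Z)\to 0$ identifies the kernel of $f$ explicitly as a quotient of the cohomology of the truncated invariant Cartan complex, which is precisely the extra transgression datum appearing in the short exact sequence for the conjugation action of $S^3$ stated in the introduction; the paper's argument is shorter but only because both hexagons have already been assembled.
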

\begin{proof}
    This again follows from the hexagons, which coincide in degree $0,1,2$. In higher degrees, the sequence along the bottom is the same and along the top one has surjections.
\end{proof}
\begin{remark}\label{rm:OtherDefs}
    Michael Luis Ortiz discusses an idea of a definition of equivariant differential cohomology in \cite[p.7-9]{Ortiz}. He gives a recipe what to do for general Lie groups, but does not make things precise. In particular he talks about differential forms on $M\times_G EG$. As you will have noted, giving them a precise meaning, in which one can compare them with integral cohomology and the Cartan model is one of the major lines in this thesis and found its final answer in this section.

	On the other hand, there is a definition of Deligne cohomology for orbifolds by Ernesto Lupercio and Bernardo Uribe in \cite{Uribe}. This includes the `action orbifold' of $G$ on $M$ with objects $M$ and morphisms $G\times M$, whose nerve is our simplicial manifold $G^\bullet\times M$. Translating his definition to our language, one gets the complex
	\[\Cone\left(\underline\Integer\oplus \Gamma\left(\cdot,(\del_1^*)^\bullet \Lambda^{\geq n}T\dual\! M\right)^*\to\Gamma\left(\cdot,(\del_1^*)^\bullet \Lambda^*T\dual\! M\right),(z,\omega)\mapsto \omega-z\right)[-1],\]
	of cochain complexes of simplicial sheaves on $G^\bullet\times M$, where $\Gamma(\cdot,E)$ denotes the sheaf of local sections of the bundle $E$. This yields (for $G$ compact) the hexagon:
\[ 
\begin{scaledcd}{.8}
\phantom{.} &\hspace{-5em}\faktor{(\Omega^{n-1}(M))^G}{d\left(\Omega^{n-2}(M)^G\right)} \arrow{dr}{a} \arrow{rr}{d}& & \Omega_\cl^{n}(M,\C)^G\arrow{dr}{}&\\
\faktor{(\Omega^{n-1}_\cl(M))^G}{d\left(\Omega^{n-2}(M)^G\right)}\hspace{-2em} \arrow{ru}{}\arrow{dr}{}&&\hspace{-1em}\hat H_G^n(M,\Z) \hspace{-1em}\arrow{ru}{R}\arrow[two heads]{dr}{I}&&\hspace{-1em}\faktor{(\Omega^{n}_\cl(M))^G}{d\left(\Omega^{n-1}(M)^G\right)} \\
&\hspace{-3em}H_G^{n-1}(M,\Cone(\underline\Z\to \Omega^*(\cdot)^G)) \arrow[hook]{ur}{} \arrow{rr}{-\beta}&&H_G^n(M,\Z) \arrow{ur}{}&
\end{scaledcd}
\]
In the case of finite groups, one has $H_G^*(M,\C)=\Omega^{n}_\cl(M)^G/d\Omega^{n-1}(M)^G$, thus this is the same as we had before. In the case of positive dimensional Lie groups it is even less satisfactory then the definition of Gomi, as there is not even equivariant complex cohomology at the left and the right end.
\end{remark}

\section{Equivariant differential characteristic classes}\label{sec:eqdiffchar}

\subsection{Definitions}

Let us restrict to compact groups $G$ acting on the manifold and on vector bundles. As rank $n$ vector bundles admit a hermitian metric, they are in one to one correspondence with principal $U(n)$-bundles. Thus any characteristic form for vector bundles corresponds to an invariant polynomial $P\in I^*(U(n))$ (see, e.g. \cite[Corollary 5.13]{Ich1}. 

Let $E\to M$ be a $G$-equivariant vector bundle. Recall that a connection is map
\[\nabla\from\Omega^0(M,E)\to \Omega^1(M,E),\] which satisfies a Leibniz rule\[\nabla(f\varphi)=df\wedge \varphi+f\nabla \varphi \text{ for } f\in\Omega^0(M,\C),\varphi\in\Omega^0(M,E).\] Further, a connection $\nabla$\symindex[n]{\nabla} extends uniquely to a $\C$-linear map \[\nabla\from \Omega^*(M,E)\to \Omega^{*+1}(M,E),\] called exterior connection, by imposing the sign respecting Leibniz rule 
\[\nabla(\omega\wedge \varphi)=d\omega\wedge \varphi+(-1)^k \omega\wedge \nabla \varphi \text{ for } \omega\in\Omega^k(M,\C),\varphi\in\Omega^*(M,E).\]

One observes, that $\nabla\circ\nabla\from\Omega^0(M,E)\to \Omega^2(M,E)$ is $C^\infty$-linear and hence given by left multiplication with an endomorphism valued 2-form, which is known as the curvature operator $R^\nabla\in  \Omega^2(M,\End E)$. If the connection is $G$-invariant, then there is another associated map.
\begin{defn}[Def.\ 2.23.\ of \cite{Bunke}]\label{def:momentmap}
    Let $\nabla$ be a $G$-invariant connection on the $G$-vector bundle $\mathcal E$. The \emphind{moment map} $\mu^\nabla\in\Hom(\mathfrak{g}, \omega^0(M, \End(\mathcal E)))^G$ \symindex[m]{\mu^\nabla} is defined by
\[\mu^\nabla(X) \wedge \varphi \defeq \nabla_{X^\sharp_M} \varphi + L^{\mathcal E}_X \varphi,\quad \varphi \in \omega^0(M, \mathcal E).\]
Here $L^{\mathcal E}_X$ \symindex[l]{L^M_X} denotes the derivative
\[L^{\mathcal E}_X \varphi=\ddt\exp(tX)^*\varphi.\]
\end{defn}

From any invariant polynomial we obtain a equivariant differential forms of the $G$-invariant connection by \[\omega(\nabla)=P(R^\nabla+\mu^\nabla)\in\Omega_G(M).\]

Moreover, if $\omega$ is integral, i.e. has integral periods, then there is an integral equivariant characteristic class $c^\omega$ coinciding with the class of $\omega$ in complex cohomology.
\begin{defn}
    A differential refinement of $\omega$ associates to every $G$-equivariant vector bundle with connection $(E,\nabla)$ on 
$M$ a class $\hat{\omega}(\nabla)\in \fatH_G(M;\Z)$ such that
\[R(\hat{\omega}(\nabla))= \omega(\nabla),\quad I(\omega(\nabla))=c^\omega(E)\]
and for every map $f\from M\to M'$, we have $f^*\hat{\omega}(\nabla)=\hat{\omega}(f^*\nabla)$.
\end{defn}
As the intersection of the kernels
\[\ker(R)\cap\ker(I)=\faktor{H^{n-1}_G(M,\C)}{H^{n-1}_G(M,\Z)}\]
is in general non-trivial, the differentially refined class $\hat{\omega}(\nabla)$ can contain finer information than the 
pair $(\omega(\nabla),c^\omega(E))$. Thus it is a priori not clear that for a given equivariant characteristic form, there is only one equivariant differential characteristic class. 

\begin{theorem}
    An integral equivariant characteristic form admits a unique equivariant differential extension.
\end{theorem}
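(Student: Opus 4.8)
\emph{Strategy.} The statement has two halves, and the hexagon \eqref{eq:CartanHexagon} together with naturality handles both. For uniqueness: if $\hat\omega_1,\hat\omega_2$ are two differential refinements of the integral equivariant characteristic form $\omega$, then for a $G$-equivariant $U(n)$-bundle with connection $(E,\nabla)$ over $M$ the difference $\kappa(E,\nabla)\defeq\hat\omega_1(\nabla)-\hat\omega_2(\nabla)$ satisfies $R(\kappa(E,\nabla))=\omega(\nabla)-\omega(\nabla)=0$ and $I(\kappa(E,\nabla))=c^\omega(E)-c^\omega(E)=0$, so it lies in $\ker R\cap\ker I=\faktor{H^{n-1}_G(M,\C)}{H^{n-1}_G(M,\Z)}$. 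I would first show $\kappa(E,\nabla)$ is independent of $\nabla$: given $\nabla_0,\nabla_1$ on $E$, choose a connection $\tilde\nabla$ on $\pr_M^*E$ over $[0,1]\times M$ (with $G$ acting trivially on $[0,1]$) restricting to $\nabla_0,\nabla_1$; then $i_t^*\kappa(\pr_M^*E,\tilde\nabla)=\kappa(E,\nabla_t)$, and since the target of $\kappa$ is a quotient of ordinary equivariant cohomology groups it is homotopy invariant, so $i_0^*=i_1^*$ on it. Hence $\kappa(E)\defeq\kappa(E,\nabla)$ is a natural transformation from isomorphism classes of $G$-equivariant $U(n)$-bundles to the functor $M\mapsto\faktor{H^{n-1}_G(M,\C)}{H^{n-1}_G(M,\Z)}$. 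To conclude $\kappa\equiv 0$, evaluate it on a universal $G$-equivariant $U(n)$-bundle, realised as an increasing union of $G$-equivariant bundles over $G$-manifolds of Grassmannian type; on each such approximation the equivariant complex cohomology is concentrated in even degrees, while $n=2k$ is even because $\omega(\nabla)=P(R^\nabla+\mu^\nabla)$ for some $P\in I^k(U(n))$ has Cartan degree $2k$. Thus $\faktor{H^{n-1}_G(-,\C)}{H^{n-1}_G(-,\Z)}=0$ on each approximation, the universal value of $\kappa$ vanishes, and naturality together with homotopy invariance of the flat functor (which allows replacing a classifying map by any homotopic smooth one) forces $\kappa\equiv 0$, i.e.\ $\hat\omega_1=\hat\omega_2$.

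\emph{Existence.} Over a single bundle a refinement exists by pure hexagon chase: since $I$ is surjective, pick $\hat c\in\fatH_G^n(M,\Z)$ with $I(\hat c)=c^\omega(E)$; then $[R(\hat c)]=[\omega(\nabla)]$ in $H_G^n(M,\C)$, because both are the image of $c^\omega(E)$, so $R(\hat c)-\omega(\nabla)=(d+\iota)\eta$ for some $\eta\in\Omega_G^{n-1}(M)$, and $\hat c-a(\eta)$ refines $\omega$ over $(E,\nabla)$. The real task is to make these choices natural, which I would again do universally: exhaust a classifying $G$-space for $G$-equivariant $U(n)$-bundles by $G$-manifolds $M_0\subset M_1\subset\cdots$ carrying universal bundles with equivariant universal connections; on each $M_j$ the above produces a refinement, and by the uniqueness just established (applied on each $M_j$) these are compatible under restriction, hence assemble to a universal differential equivariant characteristic class $\hat\omega_{\mathrm{univ}}$. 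For arbitrary $(E,\nabla)$ over $M$, classify it --- connection included --- by a smooth simplicial map $f$ into some $M_j$ and set $\hat\omega(\nabla)\defeq f^*\hat\omega_{\mathrm{univ}}$; independence of $f$ and naturality under $M\to M'$ follow from uniqueness of classifying maps up to connection-preserving homotopy together with the homotopy formula of Proposition \ref{prop:hominv}. (Alternatively, one could try to write down an explicit cocycle in $\D_C(n)_{G^\bullet\times M}$ refining the Cartan--Chern--Weil form directly, the equivariant analogue of the Cheeger--Simons / Deligne construction; this gives naturality by construction but is computationally heavier.)

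\emph{Main obstacle.} Everything formal --- the hexagon computations, the interpolation over $[0,1]\times M$, and the homotopy formula --- is already available. The delicate input is the universal construction on the equivariant side: one needs a model of the classifying object for $G$-equivariant $U(n)$-bundles that is exhausted by honest $G$-manifolds compatibly with the simplicial manifold $G^\bullet\times(-)$ defining $\fatH_G$, that carries equivariant universal connections so that bundles \emph{with connection} (not just bundles) are classified up to connection-preserving homotopy, and whose equivariant complex cohomology is concentrated in even degrees. Securing this --- essentially an equivariant Narasimhan--Ramanan statement phrased at the level of the simplicial/Cartan models used throughout the paper --- is where I expect the genuine work to lie; once it is in place, uniqueness supplies the gluing and existence follows.
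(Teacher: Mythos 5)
Your overall strategy---construct the refinement universally, pull it back along a classifying map, correct by a transgression form, and get uniqueness from the homotopy formula plus the vanishing of the flat functor on the universal object---is exactly the shape of the paper's argument. But the step you defer to the end as the ``main obstacle'' is not a technicality to be secured later; it \emph{is} the proof, and the route you sketch for it (an exhaustion of a classifying $G$-space by finite-dimensional $G$-manifolds of Grassmannian type carrying equivariant universal connections, classifying equivariant bundles \emph{with connection} up to connection-preserving homotopy) is neither established nor what is needed to interface with the simplicial/Cartan model in which $\fatH_G$ is defined. As written, both your existence argument (compatibility of the hexagon-chase lifts over the $M_j$) and your uniqueness argument (evaluating $\kappa$ on the universal bundle and using $H^{\mathrm{odd}}=0$) are conditional on this unproved input, so there is a genuine gap.

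The paper closes this gap by \emph{not} using finite-dimensional $G$-manifold approximations at all. The universal object is the simplicial manifold $NU(n)=U(n)^\bullet\times pt$ with the simplicial principal bundle $N\overline{U(n)}\to NU(n)$ and Dupont's simplicial connection $\bar\vartheta=\sum_i t_i\vartheta_i$; its differential cohomology is computed directly from the hexagon for a point, giving $\fatH^{2k}_{U(n)}(pt,\Z)=H^{2k}(BU(n),\Z)$, so the universal class $c_P$ is already its own differential refinement and there is no flat ambiguity. The classifying map is then built by hand from a trivializing cover $\mathcal U$ of $M$: one passes to the auxiliary simplicial manifold $(G^\bullet\times M)_{\mathcal U}$ (shown via the five lemma to have the same $\D_C(n)$-cohomology and hexagon), writes down explicit maps $\varphi,\bar\varphi$ to $NU(n),N\overline{U(n)}$ from the transition data, and sets $\hat\omega(\nabla)=(i^*)^{-1}\bigl(\varphi^*c_P+a(\widetilde{\omega_P}(i^*\vartheta,\bar\varphi^*\vartheta_0))\bigr)$. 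Independence of the cover and trivialization is then a three-step check (refinements, change of trivialization via a homotopy to the identity on contractible charts, common refinements), and uniqueness falls out because the formula only uses properties any refinement must have, namely naturality and Lemma \ref{lem:homotopy}. So your high-level plan is sound, but you should replace the hypothetical Grassmannian exhaustion by this concrete simplicial construction (or supply an equivariant classification theorem of the strength you are assuming, which would be a substantial separate piece of work).
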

The line of arguments proof this assertion is (almost) the following: A simplicial manifold model of the universal $U(n)$-bundle is given by (compare \cite[Section 5]{Dupont}) the simplicial principal $U(n)$-bundle $\gamma\from N\overline{U(n)}_\bullet\to NU(n)_\bullet$, with \[N\overline{U(n)}_p=U(n)^{p+1},\] $\del_i$ removes the $i$-th coefficient, and $\sigma_i$ doubles the $i$-th coefficient. $NU(n)=U(n)^\bullet\times pt$ and $\gamma(g_0,\dots,g_p)=(g_0g_1\invers,\dots,g_{p-1}g_p\invers)$. As $\fatH_{U(n)}^{2n}(pt,\Z)=H^n(BU(n),\Z)$, we would like to define a map of simplicial manifolds $G^\bullet\times M\to NU(n)$ classifying our bundle and pull back the universal class together with a corresponding connection. Now we can compare this connection with the one defined on our bundle and change the differential characteristic class according to this.

\begin{lemma}\label{lem:homotopy}
    Let $\nabla$ and $\nabla'$ are two connections on the same bundle, then
\[\hat{\omega}(\nabla)-\hat{\omega}(\nabla')=a(\tilde{\omega}(\nabla,\nabla'))\]
\end{lemma}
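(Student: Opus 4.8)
The plan is to derive the transgression identity from the homotopy formula of Proposition~\ref{prop:hominv}, applied to a tautological interpolation of the two connections. First I would let $q\from [0,1]\times M\to M$ be the projection, give $G$ the trivial action on $[0,1]$, and equip the $G$-equivariant bundle $q^*E$ with the affine combination
\[
\widehat\nabla\defeq q^*\nabla' + t\,(q^*\nabla - q^*\nabla'),
\]
where $t$ is the interval coordinate. Since the connections on a bundle form an affine space and $t$ is $G$-invariant, $\widehat\nabla$ is again a $G$-invariant connection on $q^*E$, and $i_0^*\widehat\nabla=\nabla'$, $i_1^*\widehat\nabla=\nabla$ for the inclusions $i_0,i_1\from M\hookrightarrow[0,1]\times M$. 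Applying the differential refinement to $\widehat\nabla$ produces a class $\hat\omega(\widehat\nabla)\in\fatH_G^n([0,1]\times M,\Z)$ with $R(\hat\omega(\widehat\nabla))=\omega(\widehat\nabla)=P(R^{\widehat\nabla}+\mu^{\widehat\nabla})$.

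Next I would invoke naturality of the refinement under $i_0$ and $i_1$, giving $i_1^*\hat\omega(\widehat\nabla)=\hat\omega(\nabla)$ and $i_0^*\hat\omega(\widehat\nabla)=\hat\omega(\nabla')$, and substitute $\hat x=\hat\omega(\widehat\nabla)$ into Proposition~\ref{prop:hominv} to obtain
\[
\hat\omega(\nabla)-\hat\omega(\nabla')=a\!\left(\int_{[0,1]\times M/M} R(\hat\omega(\widehat\nabla))\right)=a\!\left(\int_{[0,1]\times M/M} P(R^{\widehat\nabla}+\mu^{\widehat\nabla})\right).
\]
One then defines the transgression form to be
\[
\tilde\omega(\nabla,\nabla')\defeq \int_{[0,1]\times M/M} P(R^{\widehat\nabla}+\mu^{\widehat\nabla})\ \in\ \Omega_G^{n-1}(M)/(d+\iota)\Omega_G^{n-2}(M),
\]
which gives the claim. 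If the cocycle property is wanted later, the same argument over the standard $2$-simplex (interpolating three connections at once) shows $\tilde\omega(\nabla,\nabla'')\equiv\tilde\omega(\nabla,\nabla')+\tilde\omega(\nabla',\nabla'')$ modulo $(d+\iota)$-exact equivariant forms; that refinement is not needed for the bare statement.

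The routine bookkeeping is that the fiber integration $\int_{[0,1]\times M/M}$ is well defined on the Cartan complex and satisfies the Stokes-type identity with $d+\iota$ — exactly the lemma preceding Proposition~\ref{prop:hominv}, which uses that $G$ acts trivially on the interval so fiber integration commutes with the contractions $\iota_{X^\sharp}$. The main obstacle, as I see it, is a mild circularity: the differential refinement on $[0,1]\times M$ for the bundle $q^*E$ must already be available when the lemma is invoked. I would resolve this by performing the universal construction sketched before the lemma first for the pullback bundle $q^*E$, or — more cleanly — by observing that $\hat\omega(\nabla)$ and $\hat\omega(\nabla')$ are each obtained from one fixed classifying map $G^\bullet\times M\to NU(n)$ of $E$ by adding $a$ of a transgression term comparing the pulled-back universal connection with $\nabla$, respectively with $\nabla'$; upon subtracting, the pulled-back universal differential class cancels and only $a$ of the difference of those two transgression terms survives, and that difference is $\tilde\omega(\nabla,\nabla')$.
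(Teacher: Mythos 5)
Your proposal is correct and follows essentially the same route as the paper: the paper's proof also takes the convex combination $\nabla_t$ of the two connections over $[0,1]\times M$, applies Proposition \ref{prop:hominv} together with $R\circ\hat\omega=\omega$, and identifies the resulting fiber integral with the transgression form $\tilde\omega(\nabla,\nabla')$. Your additional remarks on naturality under $i_0,i_1$ and on the ordering of the construction are sensible elaborations of points the paper leaves implicit, but they do not change the argument.
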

\begin{proof}
Let $\nabla_t$ denote the convex combination of $\nabla$ and $\nabla'$. Then by Proposition \ref{prop:hominv}
\begin{align*}
    \hat{\omega}(\nabla)-\hat{\omega}(\nabla')&=i_1^*\hat{\omega}(\nabla_t)-i_0^*\hat{\omega}(\nabla_t)\\
&=a\left(\int_{[0,1]\times M/M} R(\hat{\omega}(\nabla_t))\right)\\
&=a\left(\int_{[0,1]\times M/M} \omega(\nabla_t)\right)\\
&=a(\tilde{\omega}(\nabla,\nabla')).
\end{align*}
\end{proof}
This Lemma implies, in particular, that we are done, if we have defined the refined for hermitian bundles with hermitian connection, since any connection can by symmetrized (compare \cite[Section 2.5]{Bunke}).

To construct the classifying map we will need an intermediate bundle, for which one can easily write down pullback maps to the given bundle and to the universal bundle. Therefor we need to recall the following construction from \cite[Section 4]{Ich1}.

Let $\mathcal U=\{U_\alpha|\alpha\in A\}$ be an open cover of some $G$-manifold $M$. This induces a simplicial cover of $G^\bullet \times M$: 
Define the simplicial index set $A^{(p)}=A^{p+1}$ with face and degeneracy maps given by removing respective doubling of the $i$-th element. Then define the simplicial cover $U^{(p)}=\{U^{(p)}_\alpha\}_{\alpha\in A^{(p)}}$  inductively by
\[U^{(p)}_\alpha=\bigcap_{i=0}^{p}\del_i^{-1}\left(U^{(p-1)}_{\del_i(\alpha)}\right),\]
where $U^{(0)}_\alpha=U_\alpha$ for any $\alpha\in A^{(0)}=A$. 

From this simplicial cover one obtains the the simplicial manifold $(G^\bullet\times M)_\mathcal{U}$ as
\[\left((G^\bullet\times M)_\mathcal{U}\right)_p\defeq\coprod_{(\alpha_0,\dots,\alpha_p)} U^{(p)}_{\alpha_0}\cap\dots\cap U^{(p)}_{\alpha_p},\]
where the disjoint union is taken over all $(p+1)$-tuples $(\alpha_0,\dots,\alpha_p)\in (A^{(p)})^{p+1}$ with $U^{(p)}_{\alpha_0}\cap\dots\cap U^{(p)}_{\alpha_p}\neq \emptyset$. The face and degeneracy maps are given on the index sets $(A^{(p)})^{p+1}$ by removing, respective doubling of the $i$-th index and on the open sets by the corresponding inclusions composed with the $i$-th face and degeneracy map of $G^\bullet \times M$.

Let $\pi\from E\to M$ be a $G$-equivariant hermitian vector bundle with hermitian connection $\nabla$ and $B$ be the associated 
principal $U(n)$-bundle furnished with the associated principal connection $\vartheta$. From an open cover $\mathcal U$ of $M$ we obtain the cover $\pi\invers\mathcal U$ of $B$ and thus the construction above yields a simplicial bundle
\[\pi\colon (G^\bullet\times B)_{\pi^{-1}\mathcal U}\to (G^\bullet\times M)_{\mathcal U}.\]
and the commutative diagram
\[\begin{tikzcd}
 (G^\bullet\times B)_{\pi\invers\mathcal U}\arrow{d}{\pi_{\mathcal U}}\arrow{r}&G^\bullet\times B\arrow{d}\\
(G^\bullet\times M)_{\mathcal U}\arrow{r}&G^\bullet\times M
\end{tikzcd}\]
induced by the inclusions of the covering sets is a pullback, since the cover we take on $G^\bullet\times B$ is induced by $\pi$ and $\mathcal U^\bullet$.

Suppose the cover $\mathcal U=\{U_\alpha\}_{\alpha\in A}$ of $M$ trivializes $B$ with trivialization \[\varphi_\alpha\from V_\alpha=\pi^{-1}(U_\alpha)\to U_\alpha\times U(n)\] and transition functions $g_{\alpha\beta}\colon U_\alpha\cap U_\beta\to U(n)$. Then there is an induced map \[\overline{\psi}\from (G^\bullet\times B)_{\pi^{-1}\mathcal U}\to N\overline U(n),\] which is given on the intersection of $p+1$ covering sets of $G^p\times B$ 
\[V=\bigcap_{j=0}^p V^{(p)}_{\alpha^j_0,\dots,\alpha^j_p}\]
by 
\[(g_1,\dots,g_p,x)\mapsto (\varphi_{\alpha^0_0}(g_1\dots g_px),\varphi_{\alpha^1_1}(g_2\dots g_px),\dots,\varphi_{\alpha^p_p}(x))\in U(n)^{p+1},\]
where, on the right-hand side, the maps $\varphi_\alpha$ are understood to be composed with the projection to $U(n)$.

Next, we want to define $\psi:(G^\bullet\times M)_{\mathcal U}\to NU(n)$, such that $\overline\psi$ covers $\psi$. Therefore we need some additional transition functions of the bundle. Define 
\begin{align*}
h_{\alpha\beta}\colon G\times M\supset\del_0\invers U_\alpha\cap\del_1\invers U_\beta&\to U(n)\\
(g,m)&\mapsto (\pi_2\circ \varphi_\alpha(gx))(\pi_2\circ \varphi_\beta(x))\invers,
\end{align*}
for any $x\in \pi\invers(m)$. Define $\psi$ on \[U=\bigcap_{j=0}^q U^{(p)}_{\alpha^j_0,\dots,\alpha^j_p}\] by 
\begin{multline}(g_1,\dots,g_p,m)\mapsto (h_{\alpha^0_0\alpha^1_1} (g_1,g_2\dots g_pm),\\h_{\alpha^1_1\alpha^2_2} (g_2,g_3\dots g_pm),\dots,h_{\alpha^{p-1}_{p-1}\alpha^p_p} (g_p,m),*).\end{multline}

These maps combine to the following commutative diagram of simplicial manifolds:
\[\begin{tikzcd}
    G^\bullet\times B\arrow{d}{\pi}&(G^\bullet\times B)_{\pi\invers\mathcal U}\arrow{l}[above]{\bar{i}}\arrow{r}{\bar{\varphi}}\arrow{d}{}&N\bar 
U(n)\arrow{d}\\
    G^\bullet\times M&(G^\bullet\times M)_{\mathcal U}\arrow{l}[above]{i}\arrow{r}{\varphi}&NU(n).
\end{tikzcd}\]

\begin{prop}
The map $i$ induces an isomorphism
\[i^*\from\fatH_G^n(M,\Z)\to H^n((G^\bullet\times M)_{\mathcal U},i^*\D_C(n))\]
and isomorphisms between all corners of the hexagons \eqref{eq:CartanHexagon} with the corresponding corners of
\[\begin{scaledcd}{.74}
\phantom{.} &\hspace{-4em}H^n((G^\bullet\times M)_{\mathcal U},i^*\Cone(\mathcal{C}^{\bullet,\geq n}\to\mathcal{C}^{\bullet,*})[-1]) \arrow{dr}{a} \arrow{rr}{d+\iota}& & H^n((G^\bullet\times M)_{\mathcal U},i^*\mathcal{C}^{\bullet,\geq n}) \arrow{dr}{}&\\
H_G^{n-1}(M,\C)\arrow{ru}{}\arrow{dr}{}&&\hspace{-1.5em}H^n((G^\bullet\times M)_{\mathcal U},i^*\D_C(n))\hspace{-1.5em}\arrow{ru}{R}\arrow[two heads]{dr}{I}&&\hspace{-1em}H_G^n(M,\C).\\
&\hspace{-3em}H_G^{n-1}(M,\C/\Z) \arrow[hook]{ur}{} \arrow{rr}{-\beta}&&H_G^n(M,\Z) \arrow{ur}{}&
\end{scaledcd}\]
\end{prop}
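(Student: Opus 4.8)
The plan is to exploit that the construction \eqref{eq:CartanHexagon} is natural under pullback along simplicial maps: every cone, short exact sequence and exact triangle used to build the hexagon pulls back along $i$, so $i^*$ is automatically a morphism from \eqref{eq:CartanHexagon} onto the displayed hexagon. Hence it suffices to show that $i^*$ is an isomorphism on each corner, together with the auxiliary groups $H^{n-1}_G(M,\C)$ and $H^n_G(M,\C)$; exactness of the three lines of the target hexagon is then inherited from \eqref{eq:CartanHexagon}. I would treat separately the ``coefficient'' corners $H^*_G(M,A)$ with $A\in\{\Z,\C,\C/\Z\}$, the ``form'' corners built out of the sheaves $\mathcal C^{\bullet,*}$, and the middle term $\fatH^n_G(M,\Z)=H^n(G^\bullet\times M,\D_C(n))$.

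For the coefficient corners the input is the construction recalled from \cite[Section 4]{Ich1}: the simplicial manifold $(G^\bullet\times M)_{\mathcal U}$ is the diagonal of the bisimplicial manifold obtained by replacing each level $G^p\times M$ by the (repetition-allowing) \Cech nerve of the induced cover $\mathcal U^{(p)}$, and $i$ is the diagonal of the canonical augmentation to $G^\bullet\times M$. Since the \Cech nerve of an open cover of a manifold realizes, levelwise, to that manifold, and since the fat realization of a diagonal bisimplicial manifold agrees with the fat realization obtained by realizing one direction and then the other, $\|i\|\from\|(G^\bullet\times M)_{\mathcal U}\|\to\|G^\bullet\times M\|$ is a weak homotopy equivalence. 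Therefore $i^*$ is an isomorphism on $H^*(\|\cdot\|,A)$ for any abelian group $A$, and by \autoref{thm:realiso} and the subsequent identification $H^*(\|M_\bullet\|,A)=H^*(M_\bullet,\underline A)$ this gives the claim for $H^{n-1}_G(M,\C)$, $H^n_G(M,\C)$, $H^{n-1}_G(M,\C/\Z)$ and $H^n_G(M,\Z)$.

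For the form corners I would use that $\mathcal C^{\bullet,*}$, and hence $i^*\mathcal C^{\bullet,*}$, $\mathcal C^{\bullet,\geq n}$ and $\Cone(\mathcal C^{\bullet,\geq n}\to\mathcal C^{\bullet,*})[-1]$, are made of fine sheaves, so that their cohomology is the cohomology of the total complex of global sections over the simplicial levels. Over $(G^\bullet\times M)_{\mathcal U}$ this total complex is precisely the $p=q$ diagonal of the simplicial \Cech double complex of $\mathcal U^\bullet$, whereas over $G^\bullet\times M$ it computes, again by fineness (Section \ref{sec:simpcechcoho}), the total cohomology of that same \Cech double complex; the map induced by $i^*$ is the diagonal comparison, which is an isomorphism by the generalized Eilenberg--Zilber theorem. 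This comparison is the technical core and the step I expect to be the main obstacle -- everything else is bookkeeping. Finally, $\D_C(n)$ fits in a short exact sequence $0\to\Cone(\mathcal C^{\bullet,\geq n}\to\mathcal C^{\bullet,*})[-1]\to\D_C(n)\to\underline\Z\to0$ analogous to \eqref{eq:ses1} and \eqref{eq:Lieses1}; applying $i^*$ to the associated long exact cohomology sequence and invoking the five lemma with the isomorphisms already established on the outer terms shows that $i^*$ is an isomorphism on $\fatH^n_G(M,\Z)$ as well, which completes both assertions of the proposition.
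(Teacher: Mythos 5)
Your overall skeleton coincides with the paper's: $\|i\|$ is a homotopy equivalence, hence $i^*$ is an isomorphism on all the coefficient corners $H^*_G(M,A)$; naturality of the cone construction makes $i^*$ a morphism of hexagons; and a five-lemma argument on one of the two defining short exact sequences transfers the isomorphism to the middle term. (The paper uses the sequence $0\to\Cone(\underline\Z\to\mathcal C^{\bullet,*})\to\D_C(n)\to\mathcal C^{\bullet,\geq n}\to 0$, reducing everything to the single corner $\Omega_G^n(M)_\cl$; you use the other sequence, which forces you to also control $H^{n+1}$ of the cone term -- a point you should not leave implicit, though it is manageable.)

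Where you genuinely diverge is the form corners, and that is exactly where your argument has a hole. You propose to identify the total complex of global sections over $(G^\bullet\times M)_{\mathcal U}$ with the $p=q$ diagonal of the simplicial \Cech{} double complex and then invoke the generalized Eilenberg--Zilber theorem; you yourself flag this as the main obstacle, and it is not carried out. Moreover, it is more delicate than you suggest: $\mathcal C^{\bullet,*}$ is only a simplicial sheaf \emph{homotopy} cochain complex, so the relevant total differential contains the extra operator $\bar\iota$ mixing the simplicial and polynomial gradings, and the standard Eilenberg--Zilber/Alexander--Whitney comparison for bi(co)simplicial abelian groups does not apply off the shelf to this three-directional structure; you would have to check compatibility of the shuffle maps with $s=\bar\iota$. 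The paper avoids all of this: since $i^*\mathcal C^{\bullet,\geq n}$ is concentrated in form degrees $\geq n$, its $H^n$ is simply the intersection of the kernels of $d+\iota$ and of $\del$ at the bottom corner $\mathcal C^{0,n}(\coprod U_\alpha)$, and the cocycle condition $\del_0^*\omega_\beta=\del_1^*\omega_\alpha$ on overlaps forces the local sections to glue to a single global $G$-invariant closed form, giving $\Omega^n_G(M)_\cl$ directly; the remaining form corner then follows from exactness of the top line. I would recommend either adopting that elementary computation or actually supplying the Eilenberg--Zilber argument adapted to homotopy cochain complexes before regarding your proof as complete.
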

\begin{proof}
Recall that $\|i\|\from\|(G^\bullet\times M)_{\mathcal U}\|\to \|G^\bullet\times M\|$ is a homotopy equivalence.
 The short exact sequence of simplicial sheaves 
\[0\to\Cone(\underline\Integer\to\mathcal{C}^{\bullet,*})\to \D_C(n)_{G^\bullet\times M}\to \mathcal 
\mathcal{C}^{\bullet,\geq n}\to 0\]
and the map $i$ induce the following diagram with exact rows
\[\begin{scaledcd}{.8}
0\arrow{r}\arrow{d}{=}&H^{n-1}_G(M,\C/\Z)\arrow{r}\arrow{d}{=}&\fatH_G^n(M,\Z)\arrow{r}\arrow{d}{i^*}
&\Omega_G^n(M)_\cl  \arrow{r}\arrow{d}{i^*}&H^{n-1}_G(M,\C/\Z)\arrow{d}{=}\\
    0\arrow{r}&H^{n-1}_G(M,\C/\Z)\arrow{r}&H^n((G^\bullet\times M)_{\mathcal U},i^*\D_C(n))\arrow{r}&H^n((G^\bullet\times M)_{\mathcal 
U},i^*\mathcal{C}^{*,\geq n})  \arrow{r}&H^{n-1}_G(M,\C/\Z).
\end{scaledcd}\]
Thus, by the five lemma, it is sufficient to show, that 
\[i^*\from \Omega_G^n(M)_\cl\to H^n((N_GM)_{\mathcal U},i^*\mathcal{C}^{\bullet,\geq n})\]
is an isomorphism. Observe that
\begin{multline*}H^n((N_GM)_{\mathcal U},i^*\mathcal{C}^{\bullet,\geq n})=\ker\left(d+\iota\from 
\mathcal{C}^{0,n}\left(\coprod U_\alpha\right)\to \mathcal{C}^{0,n+1}\left(\coprod 
U_\alpha\right)\right)\cap\\\ker\left(\del\from \mathcal{C}^{0,n}\left(\coprod U_\alpha\right)\to 
\mathcal{C}^{1,n}\left(\coprod_{\alpha_1,\alpha_2,\beta_1,\beta_2} U^{(1)}_{\alpha_1\alpha_2}\cap U^{(1)}_{\beta_1\beta_2}\right)\right).
\end{multline*}
Let $(\omega_\alpha)\in \mathcal{C}^{0,n}\left(\coprod U_\alpha\right)$. The definition of the map $\del$ 
\[\begin{tikzcd}[row sep=tiny,column sep=large]
\phantom{.}&\phantom{g}m\in U_{\beta_2}\\
U^{(1)}_{\alpha_1\alpha_2}\cap U^{(1)}_{\beta_1\beta_2}\ni (g,m)\arrow[mapsto]{ur}{\del_0}    \arrow[mapsto,swap]{dr}{\del_1}&\\
&gm\in U_{\alpha_1}
\end{tikzcd}\]
implies that $\del (\omega_\alpha)=0$ is equivalent to
\[\del^*_0\omega_\beta|_{U^{(1)}_{\alpha\beta}}=\del_1^*\omega_\alpha|_{U^{(1)}_{\alpha\beta}}.\]
Moreover, since ${e}\times (U_\alpha\cap U_\beta)\subset \del_1^*U_\alpha\cap \del_0^*U_\beta=U^{(1)}_{\alpha\beta}$, this equation implies that 
$(\omega_\alpha)$ is the restriction of a global section $\omega\in C^{0,n}(M)$, which is by the same equation $G$-invariant. Hence $\omega\in \ker(d+\iota)=\Omega_G^n(M)_\cl$. This proves the 
first claim.

The claim about the hexagon follows by the same argument, because the `de Rham' sequence along the top is exact.
\end{proof}

One defines (compare \cite[p.94]{Dupont}) a connection $\bar{\vartheta}$ on $N\overline{U(n)}\to NU(n)$: Let $\vartheta_0\in \Omega^1(K,\mathfrak k)$ denote the unique connection of the trivial bundle $K\to \mathrm{pt}$, i.e., \[\vartheta_0(k)=L_{k\invers}\from T_kK\to T_eK=\mathfrak k.\] 
Let \[\pi_{i}\from \Delta^p\times K^{p+1}\to K\] denote the projection to the $i$-th coefficient, $i=0,\dots,p$ and $\vartheta_{i}=\pi_{i}^*\vartheta_0$. Then we define $\bar{\vartheta}$ on $\Delta^p\times (N\overline K)_{p}$ by \[\bar{\vartheta}=\sum_{i} t_i\vartheta_{i},\] where $(t_0,\dots,t_p)$ are barycentric coordinates on the simplex. $\bar{\vartheta}|_{\Delta^p\times (N\overline K)_p}$ is a connection on $\Delta^p\times (N\overline K)_p$, as it is a convex combination of connections. It can be seen easily from the definition, that $\bar{\vartheta}$ is a  simplicial Dupont 1-form. For more details see also \cite{Ich1}.

Let $P\in I^*(U(n))$ denote the polynomial and $c_P\in H^n(BU(n),\Z)=\fatH_{U(n)}^{2n}(pt,\Z)$ the universal characteristic class corresponding to the integral characteristic form $\omega_P$.
\begin{defn2}
    The differential refinement is given by the formula
\[\hat{\omega}(\nabla)=(i^*)\invers(\varphi^*c_P+a(\widetilde{\omega_P}(i^*\vartheta,\bar{\varphi}^*\vartheta_0))).\]
This definition is independent of the chosen cover and trivializations and defines the differential refinement of the integral characteristic form $\omega_P$. 
\end{defn2}
\begin{proof}
We will prove the independence of the cover in three steps:

\emph{Step 1:} Let $\mathcal U'=\{U'_\beta\}$ be a refinement of the cover $\mathcal U$, i.e., for any $\beta$, there is some $\alpha(\beta)$, such that $U'_\beta\subset U_{\alpha(\beta)}$; let $\varphi_\beta'=\varphi_{\alpha(\beta)}|_{U'_\beta}$. The inclusion of the refinement yields a commutative diagram
\[\begin{tikzcd}[row sep=scriptsize, column sep=scriptsize]
G^\bullet\times B\arrow{dd}\arrow[equal]{dr}&&(G^\bullet\times B)_{\pi\invers \mathcal U'}\arrow{ll}[above]{\bar i'}\arrow{rr}{\bar{\varphi}'}\arrow{dd}{}\arrow{dr}&&N\bar U(n)\arrow{dd}\arrow[equal]{dr}&\\    
&G^\bullet\times B&&(G^\bullet\times B)_{\pi\invers\mathcal U}\arrow[crossing over]{ll}[above, near end]{\bar i}\arrow[crossing over]{rr}[near start]{\bar{\varphi}}&&N\bar U(n)\arrow{dd}\\
G^\bullet\times M\arrow[equal]{dr}&&(G^\bullet\times M)_{\mathcal U'}\arrow{ll}[above, near start]{i'}\arrow{rr}[near start]{\varphi'}\arrow{dr}&&NU(n)\arrow[equal]{dr}&\\
& G^\bullet\times M\arrow[crossing over,leftarrow]{uu}{}&&(G^\bullet\times M)_{\mathcal U}\arrow{ll}{i}\arrow{rr}[below]{\varphi}\arrow[crossing over,leftarrow]{uu}{}&&NU(n)
\end{tikzcd}\]
from which the independence of the cover follows, because the direct pullback is same as the one factorized over the coarser cover.

\emph{Step 2:} Take one cover $\mathcal U=\{U_\alpha\}$, with two different families of trivialization maps $\varphi_\alpha,\varphi'_\alpha\from \pi\invers U_\alpha\to U_\alpha\times G$. 

Then there is a family of maps $\psi_\alpha\from U_\alpha\to G$, such that $\psi_\alpha(\pi(b))\cdot \varphi_\alpha(b)=\varphi_\alpha'(b)$ for any $b\in \pi\invers U_\alpha$ and any $\alpha$. 

The difference of the two definitions is
\begin{multline*}
\varphi^*c_P+a(\widetilde{\omega_P}(i^*\vartheta,\bar{\varphi}^*\vartheta_0))-\varphi'^*c_P-a(\widetilde{\omega_P}(i^*\vartheta,\bar{\varphi'}^*\vartheta_0))\\=\varphi^*c_P-\varphi'^*c_P-a(\widetilde{\omega_P}(\bar{\varphi}^*\vartheta_0,\bar{\varphi'}^*\vartheta_0))    
\end{multline*}
First assume each $U_\alpha$ is contractible, then there is a homotopy $\widetilde{\psi}_\alpha\from [0,1]\times U_\alpha\to G$ such that $i_1^*\widetilde{\psi}_\alpha=\psi_\alpha$ and $i_0^*\widetilde{\psi}_\alpha$ maps any point to $e\in G$. These homotopies induce a homotopy 
\[\widetilde{\varphi}\from [0,1]\times (G^\bullet\times B)_{\pi\invers \mathcal U'}\to NU(n)\]
 between $\tilde{\varphi}_0=\varphi$ and $\tilde{\varphi}_1=\varphi'$ and one can calculate
\begin{align*}
\varphi^*c_P-\varphi'^*c_P&=i_0^*\tilde{\varphi}^*c_P-i_1^*\tilde{\varphi}^*c_P\\
&=a\left(\int_{[0,1]} R(\tilde{\varphi}^*c_P)\right)\\
&=a\left(\int_{[0,1]} \tilde{\varphi}^*R(c_P)\right)\\
&=a\left(\int_{[0,1]} \tilde{\varphi}^*\int_\Delta P(\vartheta_0)\right)\\
&=a\left(\int_{[0,1]} \int_\Delta P(\tilde{\varphi}^*\vartheta_0)\right)\\
&=a(\tilde{\omega_P}(\bar{\varphi}^*\vartheta_0,\bar{\varphi'}^*\vartheta_0)).
\end{align*}
In the last step, we use that $\widetilde{\omega}_P$ is independent of the path between the connections.

The case of non contractible $U_\alpha$ follows by Step 1.

\emph{Step 3:} Let $(\mathcal U,(\varphi_\alpha)),(\mathcal U',(\varphi'_\beta))$ be two different covers with trivializations. Let $\tilde{\mathcal U}=\{U_\alpha\cap U'_\beta|\alpha,\beta\}$ be the common refinement on which there are two different families of trivializations are introduced by $\varphi$ and $\varphi'$. Now the statement follows from the previous steps.

Next, we check the properties of the differential refinement: \[I(\hat{\omega}(\nabla))=I((i^*)\invers(\|\varphi\|^*c_P))=c^\omega(B)\] and
\begin{align*}
	R(\hat{\omega}(\nabla))&=R((i^*)\invers(\|\varphi\|^*c_P)+a(\tilde{\omega}(i^*\vartheta,\bar{\varphi}^*\vartheta_0))\\
	&=R((i^*)\invers(\|\varphi\|^*c_P))+(d+\iota)\tilde{\omega}(i^*\vartheta,\bar{\varphi}^*\vartheta_0)\\
	&=(i^*)\invers(\omega(\bar{\varphi}^*\vartheta_0)+\omega(i^*\vartheta)-\omega(\bar{\varphi}^*\vartheta_0))\\
	&=\omega(\nabla).   
\end{align*}

Let $(F,f)\from (B,M)\to (B',M')$ be a pullback. As a trivialization of $(B',M')$ induces a trivialization of $(B,M)$, one has a commutative diagram
\[\begin{tikzcd}[row sep=scriptsize, column sep=tiny]
G^\bullet\times B\arrow{dd}\arrow{dr}&&(G^\bullet\times B)_{\pi\invers f\invers\mathcal U}\arrow{ll}\arrow{rr}\arrow{dd}{}\arrow{dr}&&N\bar U(n)\arrow{dd}\arrow[equal]{dr}&\\    
&G^\bullet\times B'&&(G^\bullet\times B')_{\pi\invers\mathcal U}\arrow[crossing over]{ll}{}\arrow[crossing over]{rr}{}&&N\bar U(n)\arrow{dd}\\
G^\bullet\times M\arrow{dr}&&(G^\bullet\times M)_{f\invers\mathcal U}\arrow{ll}\arrow{rr}\arrow{dr}&&NU(n)\arrow[equal]{dr}&\\
& G^\bullet\times M'\arrow[crossing over,leftarrow]{uu}{}&&(G^\bullet\times M')_{\mathcal U}\arrow{ll}\arrow{rr}\arrow[crossing over,leftarrow]{uu}{}&&NU(n)
\end{tikzcd}\]
which clearly implies the pullback property.

The refinement is unique, since we used for our definition only properties the differential refinement necessarily has, namely the pullback-property and Lemma \ref{lem:homotopy}.
\end{proof}

\subsection{Multiplicative structures}
\begin{defn}[compare {\cite[Definition 3.94]{Bunke}}] Let $G$ be a compact Lie group. A product on equivariant Deligne cohomology is the datum of a graded commutative ring structure (denoted by $\cup$) on $\fatH_G^*(M,\Z)$ for every $G$-manifold M such that
\begin{enumerate}    
    \item $f^*\from \fatH_G^*(M,\Z)\to \fatH_G^*(M',\Z)$is a homomorphism of rings for every smooth map $f\from M'\to M$,
    \item $R\from \fatH_G^*(M,\Z)\to \Omega^*_G(M)_\cl$ is multiplicative for all $M$,
    \item $I\from \fatH_G^*(M,\Z)\to H_G^*(M,\Z)$ is multiplicative for all $M$, and
    \item $a(\alpha) \cup x = a(\alpha \wedge R(x))$ for all $\alpha\in \omega^*_G(M;\C)/\im(d+\iota)$ and $x\in\fatH_G^*(M,\Z)$
\end{enumerate}
\end{defn}
\begin{prop}
    There exists a unique product on equivariant Deligne cohomology.
\end{prop}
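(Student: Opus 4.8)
The plan is to produce the product on the level of the defining complexes, essentially formally as in the mapping-cone constructions of \cite{Bunke} and \cite{BNV}, and then to use the hexagon \eqref{eq:CartanHexagon} to show it is unique.

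\emph{Existence.} The one input special to the equivariant situation is a multiplicative structure on the coefficient object $\mathcal C^{\bullet,*}$. On $\Omega_G^*(M)$ the wedge product $(\omega_1\wedge\omega_2)(X)=\omega_1(X)\wedge\omega_2(X)$ is graded-commutative and satisfies the Leibniz rule for $d+\iota$; this extends to $C^\bullet(G,S^*(\g\dual)\otimes\Omega^*(M))$ by a cup-product formula of bar-complex type (twisting the group arguments by the same adjoint actions that appear in $\bar d$), and one checks it is a morphism of simplicial sheaves compatible with $\bar d$ and $d+\iota$, and compatible with the homotopy $\bar\iota$ up to the homotopies forced by Lemma~2.1.1 of \cite{Getzler}; equivalently one transports the Dupont shuffle product on $\Omega^{\bullet,*}(G^\bullet\times M)$ along a homotopy inverse of the quasi-isomorphism $\mathcal J$ of Section~\ref{sec:quasiiso}. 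Once $\mathcal C^{\bullet,*}$ is a multiplicative simplicial sheaf homotopy cochain complex, the mapping cone $\D_C(n)=\Cone(\underline\Z\oplus\mathcal C^{\bullet,\geq n}\to\mathcal C^{\bullet,*})[-1]$ carries the classical Deligne--Beilinson product: the wedge product on the form parts, the cone correction term $\omega\wedge c'\pm c\wedge\omega'$ on the lower part, and multiplication of the integer parts. This defines a chain map $\D_C(n)\otimes\D_C(m)\to\D_C(n+m)$ which is associative and graded-commutative up to explicit chain homotopy, hence induces a graded-commutative ring structure on $\fatH_G^*(M,\Z)$. Naturality in $M$ and axioms~2, 3 are immediate because $R$ and $I$ are the projections occurring in the cone, and axiom~4, $a(\alpha)\cup x=a(\alpha\wedge R(x))$, is read off the formula.

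\emph{Uniqueness.} Let $\cup,\cup'$ satisfy the four axioms and set $D(x,y)=x\cup y-x\cup' y$; it is biadditive and natural. Axioms~2 and 3 give $R\circ D=0$ and $I\circ D=0$, so $D$ has values in $\ker R\cap\ker I$, which by the exactness of the diagonals of \eqref{eq:CartanHexagon} is the image of $\im\!\big(H^{*-1}_G(M,\C)\to H^{*-1}_G(M,\C/\Z)\big)$ under the injection $H^{*-1}_G(M,\C/\Z)\hookrightarrow\fatH_G^*(M,\Z)$. Axiom~4 with biadditivity gives $D(a(\alpha),y)=0$, so $D$ descends in each variable along $\fatH_G^*(M,\Z)/\im a\xrightarrow{\,\sim\,}H^*_G(M,\Z)$ (the isomorphism induced by $I$) to a natural biadditive pairing $H^p_G(-,\Z)\times H^q_G(-,\Z)\to\fatH_G^{p+q}(-,\Z)$ landing in that flat, topologically trivial subgroup. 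By representability this pairing is encoded by a universal class in $\fatH_G^{p+q}$ of a simplicial-manifold model of $K(\Z,p)\times K(\Z,q)$; additivity under the abelian group structures forces this class to be determined by its bi-primitive component, which, since the only primitive classes in $H^*(K(\Z,k);\C)$ lie in degree $k$, would have to live in total degree $p+q$, whereas a flat class with trivial integral part in degree $p+q$ is the image of an element of $H^{p+q-1}(\cdot,\C)$, whose bi-primitive component vanishes for degree reasons. Hence the universal class is zero and $D=0$, using Section~\ref{sec:simpman} to identify the cohomology of the simplicial model with that of $K(\Z,p)\times K(\Z,q)$; the finitely many cases with $p$ or $q\leq 1$ are checked directly from $\fatH_G^0(M,\Z)=H^0_G(M,\Z)$, $\fatH_G^1(M,\Z)=H^0_G(M,\C/\Z)$ and the homotopy formula of Proposition~\ref{prop:hominv}.

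\emph{Main obstacle.} The delicate point is the cochain-level existence argument. Because $\mathcal C^{\bullet,*}$ is only a simplicial \emph{homotopy} cochain complex --- it carries the extra homotopy $\bar\iota$ of the Getzler resolution --- verifying that it is multiplicative, and that the Deligne--Beilinson product is a chain map for $\D_C$ with associativity and commutativity controlled by coherent homotopies, involves bookkeeping with no counterpart in the classical Deligne-complex situation; the alternative route through $\mathcal J$ only trades this for checking that $\mathcal J$ (or a homotopy inverse) is lax symmetric monoidal up to coherent homotopy. On the uniqueness side the subtle step is realizing the relevant Eilenberg--MacLane objects inside the category of simplicial manifolds so that the bi-primitivity computation can be carried out there.
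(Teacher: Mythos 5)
Your proposal follows essentially the same route as the paper: existence is sketched by transporting Bunke's cochain-level Deligne--Beilinson product to the cone complex with the Getzler boundary $\bar d+\bar\iota+d+\iota$ (the paper likewise only outlines this step), and uniqueness proceeds by showing the difference of two products kills $\im a$ and lands in $\ker R\cap\ker I$, hence factors through a natural biadditive pairing $H^p_G(-,\Z)\times H^q_G(-,\Z)\to H^{p+q-1}_G(-,\C/\Z)$ represented by a map out of $K(\Z,p)\wedge K(\Z,q)$. The only cosmetic difference is the final vanishing step, where you argue via K\"unneth and primitivity of $H^*(K(\Z,k);\C)$ while the paper simply observes that the smash product is $(p+q-1)$-connected; these amount to the same computation.
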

\begin{proof}
    \emph{Uniqueness} follows by (almost) verbatim the same arguments as given by \cite[p. 60]{Bunke}: The difference between two products
    \[B=\cup'-\cup\from \fatH_G^p(M,\Z)\otimes\fatH_G^q(M,\Z)\to \fatH_G^{p+q}(M,\Z)\] factorizes over a bilinear map
    \[\tilde B\from H_G^p(M,\Z)\otimes H_G^q(M,\Z)\to H_G^{p+q}(M,\C/\Z),\]
    by the hexagon \eqref{eq:CartanHexagon}, since $R\circ B=0$ and $B\circ (a\times \id)=0$. The bilinear map $\tilde B$ corresponds to a map of Eilenberg-Maclane spaces
    \[K(Z, p) \wedge K(Z, q) \to  K(C/Z, p + q - 1),\] which is homotop to a constant map, as the smash product on the left hand side is $p+q-1$-connected.

 \emph{Existence:} We will leave this to the reader. The idea is to copy the arguments of \cite[Section 3.4]{Bunke}, but replace the de Rham $d$ in the definition of the map on the level of chain complexes \cite[Equation (29)]{Bunke} by the boundary map $\bar d+\bar{\iota}+ d+ \iota$ of Getzler.
 \end{proof}

Recall that the total equivariant differential Chern class is the sum of the equivariant differential Chern classes
\[\hat c(\nabla)=1+\hat c_1(\nabla)+\hat c_2(\nabla)+\dots\in \bigoplus_{n\text{ even}}\fatH_G^n(M,\Z).\]

\begin{prop}
The total equivariant differential Chern class satisfies a Whitney sum formula, i.e., given two $G$-equivariant vector bundles $(E,\nabla),(E',\nabla)$ with equivariant connection over the $G$-manifold $M$ and let $\nabla\oplus\nabla'$ be the Whitney sum connection on $E\oplus E'$, then 
\[\hat c(\nabla\oplus\nabla')=\hat c(\nabla)\cup \hat c(\nabla').\]
\end{prop}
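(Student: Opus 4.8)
The plan is to mimic the non-equivariant argument of \cite{Bunke}: show that the two sides of the Whitney formula have the same curvature and the same underlying equivariant integral class, so that their difference lies in the flat part $\ker R\cap\ker I$ of $\fatH_G^*(M,\Z)$, and then kill that difference by a universality argument. As a preliminary reduction I would pass to hermitian metrics and hermitian connections on $E,E',E\oplus E'$: any connection symmetrizes, and Proposition \ref{prop:hominv} together with Lemma \ref{lem:homotopy} shows the differential classes are unchanged up to $a$ of a transgression, in a way that is compatible with $\cup$ through axiom (4) of the product, so it suffices to treat the hermitian case.

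First I would compare curvatures. By construction $R(\hat c(\nabla))$ is the total equivariant Chern form $c(R^\nabla+\mu^\nabla)\in\Omega_G^*(M)_\cl$, and likewise for $\nabla'$ and for $\nabla\oplus\nabla'$. The Whitney sum connection has block-diagonal curvature $R^{\nabla\oplus\nabla'}=R^\nabla\oplus R^{\nabla'}$ and, by Definition \ref{def:momentmap}, block-diagonal moment map $\mu^{\nabla\oplus\nabla'}=\mu^\nabla\oplus\mu^{\nabla'}$; since the total Chern polynomial is $\det(\mathrm{id}+\tfrac{i}{2\pi}(\cdot))$ and the determinant of a block-diagonal endomorphism is the product of the blocks, one gets $c(R^{\nabla\oplus\nabla'}+\mu^{\nabla\oplus\nabla'})=c(R^\nabla+\mu^\nabla)\wedge c(R^{\nabla'}+\mu^{\nabla'})$ already on the level of Cartan forms. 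Combined with multiplicativity of $R$ (axiom (2)) this gives $R(\hat c(\nabla\oplus\nabla'))=R(\hat c(\nabla)\cup\hat c(\nabla'))$. For the integral classes, multiplicativity of $I$ (axiom (3)) reduces the claim to the equivariant Whitney formula $c(E\oplus E')=c(E)\cup c(E')$ in $H_G^*(M,\Z)$, which follows by applying the ordinary Whitney formula to the bundles $EG\times_G E$ and $EG\times_G E'$ over the Borel construction $EG\times_G M$.

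Hence $D(\nabla,\nabla')\defeq\hat c(\nabla\oplus\nabla')-\hat c(\nabla)\cup\hat c(\nabla')$ lies in $\ker R\cap\ker I$, which by the hexagon \eqref{eq:CartanHexagon} is $H_G^{n-1}(M,\C)/\im H_G^{n-1}(M,\Z)$ in each relevant degree $n=2k$. Next I would show $D(\nabla,\nabla')$ is independent of the chosen connections. If $\nabla$ is replaced by $\nabla_1$, Lemma \ref{lem:homotopy} gives $\hat c(\nabla_1)-\hat c(\nabla)=a(\tilde\omega(\nabla_1,\nabla))$ with $\tilde\omega$ the equivariant transgression form, and axiom (4) turns $a(\tilde\omega(\nabla_1,\nabla))\cup\hat c(\nabla')$ into $a\bigl(\tilde\omega(\nabla_1,\nabla)\wedge c(R^{\nabla'}+\mu^{\nabla'})\bigr)$; on the other hand the transgression form of $\nabla_1\oplus\nabla'$ against $\nabla\oplus\nabla'$ is $\int_{[0,1]} c(R^{\nabla_t\oplus\nabla'}+\mu^{\nabla_t\oplus\nabla'})=\int_{[0,1]}\bigl(c(R^{\nabla_t}+\mu^{\nabla_t})\wedge c(R^{\nabla'}+\mu^{\nabla'})\bigr)=\tilde\omega(\nabla_1,\nabla)\wedge c(R^{\nabla'}+\mu^{\nabla'})$ by the same block-diagonal multiplicativity and the fact that fibre integration over $[0,1]$ pulls out the basic factor, so the two corrections cancel in $D$; the argument for changes of $\nabla'$ is symmetric. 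We may therefore take $\nabla,\nabla'$ to be the connections pulled back from the universal simplicial connection $\bar\vartheta$ along the classifying maps of $E$ and $E'$; then $D(\nabla,\nabla')$ is the pullback of the corresponding difference $D^{\mathrm{univ}}$ for the external sum of the two tautological bundles over $BU(n)\times BU(m)$. But $D^{\mathrm{univ}}$ lies in $H^{\mathrm{odd}}(BU(n)\times BU(m),\C)/\im H^{\mathrm{odd}}(BU(n)\times BU(m),\Z)$, which vanishes since the cohomology of $BU(n)\times BU(m)$ is concentrated in even degrees; hence $D^{\mathrm{univ}}=0$ and so $D(\nabla,\nabla')=0$.

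The main obstacle is the connection-independence step together with the clean set-up of the universal reduction: one must be sure the transgression corrections on the two sides of $D$ really cancel, which is exactly where the form-level multiplicativity of the equivariant Chern form and axiom (4) of the product are used in tandem. Once $D$ is known to be natural and connection-independent, its vanishing is immediate from the degree of $H^*(BU(n)\times BU(m),\C)$; the remaining points (reduction to hermitian data, block form of the moment map) are routine.
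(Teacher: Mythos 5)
Your proposal is correct and follows essentially the same route as the paper: reduce to the universal connections pulled back along the classifying maps, where the Whitney formula holds because the relevant differential cohomology in even degrees coincides with integral cohomology (the flat part $H^{\mathrm{odd}}(BU(n)\times BU(m),\C)/\im H^{\mathrm{odd}}(\cdot,\Z)$ vanishes), and then control the connection-dependence via Lemma \ref{lem:homotopy} and axiom (4) of the product, using that the transgression of a block sum along $\nabla_t\oplus\nabla'$ equals $\tilde\omega(\nabla_1,\nabla)\wedge\omega(\nabla')$. The only difference is presentational: you phrase the second step as showing the defect $D$ is connection-independent, whereas the paper directly expands $\hat c(\nabla)\cup\hat c(\nabla')$ in terms of the universal connections and collects the correction terms — the underlying computation is the same.
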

\begin{proof}
The proof consists of two steps: First we will prove the formula for the classifying space and afterwards, we will show that the difference terms fit.

Since the $U(n)$-equivariant differential cohomology of a point equals in even dimension the $U(n)$-equivariant integral cohomology of a point, the formular follows from the non-differential Whitney sum formula and the compatibility of the cup products.

Thus by construction of the equivariant differential characteristic classes, we do only have to check, that the difference terms fit, i.e., the classifying maps of $E, E'$ and $E\oplus E'$ induce connections $\nabla_0, \nabla_0'$ and $\nabla_0\oplus\nabla_0'$, for which \[\hat c(\nabla_0\oplus\nabla'_0)=\hat c(\nabla_0)\cup \hat c(\nabla'_0)\] holds by the pullback property of $\hat c$ and the first step for the universal bundles.

Denote the characteristic form of $c$ by $\omega$ and the transgression form by $\widetilde{\omega}$. Now calculate by applying the properties of the cup product
\begin{align*}
\hat c(\nabla)\cup &\hat c(\nabla')\\
&=\left(\hat c(\nabla_0)+a(\widetilde{\omega}(\nabla,\nabla_0))\right)\cup \left(\hat c(\nabla_0')+a(\widetilde{\omega}(\nabla',\nabla_0'))\right)\\
&=\hat c(\nabla_0\oplus\nabla_0')+a(\widetilde{\omega}(\nabla,\nabla_0))\cup\hat c(\nabla_0')) 
\\&\qquad+\hat c(\nabla_0)\cup a(\widetilde{\omega}(\nabla',\nabla_0'))+a(\widetilde{\omega}(\nabla,\nabla_0))\cup a(\widetilde{\omega}(\nabla',\nabla_0'))\\
&=\hat c(\nabla_0\oplus\nabla_0')+a(\widetilde{\omega}(\nabla,\nabla_0)\wedge R(\hat c(\nabla_0'))) 
\\&\qquad+a(R(\hat c(\nabla_0))\wedge(\widetilde{\omega}(\nabla',\nabla_0'))+a(\widetilde{\omega}(\nabla,\nabla_0)\wedge R\circ a(\widetilde{\omega}(\nabla',\nabla_0')))\\
&=\hat c(\nabla_0\oplus\nabla_0')+a(\widetilde{\omega}(\nabla,\nabla_0)\wedge \omega(\nabla_0'))
\\&\qquad +a(\omega(\nabla_0)\wedge(\widetilde{\omega}(\nabla',\nabla_0'))+a(\widetilde{\omega}(\nabla,\nabla_0)\wedge (\omega(\nabla')-\omega(\nabla_0')))\\
&=\hat c(\nabla_0\oplus\nabla_0')+a(\widetilde{\omega}(\nabla,\nabla_0)\wedge \omega(\nabla')) +a(\omega(\nabla_0)\wedge(\widetilde{\omega}(\nabla',\nabla_0'))\\
&=\hat c(\nabla_0\oplus\nabla_0')+a(\widetilde{\omega}(\nabla\oplus\nabla',\nabla_0\oplus\nabla')+a((\widetilde{\omega}(\nabla_0\oplus\nabla',\nabla_0\oplus\nabla_0'))\\
&=\hat c(\nabla\oplus\nabla').
\end{align*}
\end{proof}
 
\section{Examples for equivariant differential cohomology}
\makeatletter{}\subsection{Free actions}
Let the Lie group $G$ act freely on the manifold $M$ from the left. Does equivariant differential cohomology groups make a difference between the $G$ manifolds $M$ and $G\times M/G$? As equivariant cohomology does not make one, the question reduces to differential forms. 

To discuss this, we collect the following statements:
\begin{defn}[Def.\ 13.5.\ of \cite{LieAction}]
    The action is \emphind{proper}, if the action map
\[G \times M \to M \times M, (g, m) \mapsto (g m, m)\]
is proper, i.e., the pre-image of any compact set is compact.
\end{defn}
\begin{theorem}[Th.\ 13.8.\ of \cite{LieAction}]
Suppose $G$ acts properly on $M$. Then each orbit $G\cdot m$ is an embedded closed submanifold of $M$, with
\[T_m(G \cdot m) = \{X^\sharp_M(m) | X \in \mathfrak g\}=\mathfrak{g}^\sharp_m.\]\symindex[g]{\mathfrak{g}^\sharp}
\end{theorem}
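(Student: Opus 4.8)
The plan is to exhibit the orbit $G\cdot m$ as the image of an injective immersion out of a homogeneous space and then to promote this to a closed embedding using properness. First I would fix $m\in M$ and study the orbit map $\alpha_m\from G\to M$, $g\mapsto g\cdot m$. Equivariance, $\alpha_m\circ L_h=h\cdot(-)\circ\alpha_m$ with $L_h$ left translation on $G$ and $h\cdot(-)$ the action diffeomorphism on $M$, forces $d(\alpha_m)_g$ to have the same rank at every $g$; so $\alpha_m$ has constant rank, the stabilizer $G_m=\alpha_m\invers(m)$ is a closed Lie subgroup by the constant-rank/closed-subgroup theorems, the quotient $G/G_m$ carries a canonical smooth structure, and $\alpha_m$ descends to an \emph{injective} map $\bar\alpha_m\from G/G_m\to M$ whose differential is injective (its kernel upstairs being exactly $\g_m$), i.e.\ $\bar\alpha_m$ is an injective immersion.

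Next I would feed in properness. Applying the proper action map $\theta\from G\times M\to M\times M$, $\theta(g,x)=(gx,x)$, to the compact set $\{(m,m)\}$ gives that $G_m$ is compact, so $G/G_m$ is a (locally compact, second countable) manifold. More importantly, for any compact $K\subset M$ the set $\theta\invers(K\times\{m\})=\{(g,m)\mid gm\in K\}$ is compact, hence so is its projection $\{g\in G\mid gm\in K\}$ to $G$, and therefore $\bar\alpha_m\invers(K)$ is compact in $G/G_m$. Thus $\bar\alpha_m$ is a proper map. A proper continuous map into a locally compact Hausdorff space is closed, so $\bar\alpha_m$ is a closed, injective, continuous map, hence a homeomorphism onto its image, and that image $G\cdot m$ is closed in $M$. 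An injective immersion that is also a topological embedding is a smooth embedding, so $G\cdot m$ is an embedded closed submanifold, diffeomorphic to $G/G_m$.

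The tangent space formula then falls out of this description. Differentiating $\alpha_m$ at the identity yields the linear map $\g\to T_mM$, $X\mapsto \ddt\exp(tX)\cdot m=X^\sharp_M(m)$, whose image is precisely $\{X^\sharp_M(m)\mid X\in\g\}=\g^\sharp_m$; since $\bar\alpha_m$ realizes $G\cdot m$ as an embedded submanifold and its differential at the base point has image equal to $T_m(G\cdot m)$, this image agrees with $T_m(G\cdot m)$ (and its kernel is $\g_m$, irrelevant here).

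I expect the main obstacle to be the package of point-set and differential-topology facts invoked in the second and third paragraphs — that the quotient $G/G_m$ is a genuine manifold on which $\alpha_m$ factors, and that a proper injective immersion between manifolds is a closed embedding — rather than any one computation; the equivariance argument for constant rank and the tangent-space identification are routine once that machinery is in place.
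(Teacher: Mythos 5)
The paper does not prove this statement at all: it is quoted verbatim as Theorem 13.8 of the cited reference on Lie group actions and used as a black box, so there is no internal proof to compare against. Your argument is the standard and correct one for this result: constant rank of the orbit map via equivariance, the induced injective immersion $\bar\alpha_m\from G/G_m\to M$, properness of $\bar\alpha_m$ extracted from properness of $(g,x)\mapsto(gx,x)$ applied to $K\times\{m\}$, the fact that a proper injective continuous map into a manifold is a closed topological embedding, and the identification of $T_m(G\cdot m)$ with the image of $d(\alpha_m)_e$, which is $\{X^\sharp_M(m)\mid X\in\mathfrak g\}$. The only step you compress is the identification of $\ker d(\alpha_m)_e$ with the Lie algebra of $G_m$ (one inclusion is immediate; the reverse needs the constant-rank theorem to see that $G_m$ has the complementary dimension), but that is exactly the "package of machinery" you flag yourself, and it is standard. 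No gaps.
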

\begin{theorem}[Th.\ 13.10.\ of \cite{LieAction}]
Suppose that $G$ acts properly and freely on $M$, then the orbit space $M/G$ is a manifold and the quotient map $\pi \from M \to M/G$ is a submersion.
\end{theorem}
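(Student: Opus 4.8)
The plan is to prove a \emph{slice theorem} for free proper actions --- near each orbit the action looks like $G$ acting by left translation on $G\times S$ for a submanifold $S$ transverse to the orbit --- and then to use the slices $S$ as coordinate charts for $M/G$. Fix $m_0\in M$. By the preceding theorem the orbit $G\cdot m_0$ is an embedded submanifold with $T_{m_0}(G\cdot m_0)=\mathfrak{g}^\sharp_{m_0}$, and freeness makes the orbit map $g\mapsto gm_0$ injective, hence an embedding. Choose an embedded submanifold $S\ni m_0$ with $T_{m_0}S\oplus\mathfrak{g}^\sharp_{m_0}=T_{m_0}M$, so that $\dim S=\dim M-\dim G$, and consider the $G$-equivariant map
\[\Phi\from G\times S\to M,\qquad (g,s)\mapsto gs,\]
where $G$ acts on $G\times S$ by left translation on the first factor. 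At $(e,m_0)$ the differential $d\Phi$ is surjective, hence an isomorphism for dimension reasons; since the locus in $G\times S$ where $d\Phi$ is invertible is open, $G$-invariant, and contains $(e,m_0)$, it contains $G\times S'$ for a smaller slice $S'$. Replacing $S$ by $S'$, we may assume $\Phi$ is a local diffeomorphism on all of $G\times S$.

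The crucial step --- and the only place where properness enters --- is that $S$ can be shrunk further so that $\Phi$ becomes \emph{injective}. Suppose not: then there are sequences $s_n,s_n'\to m_0$ lying in ever smaller slices and elements $g_n\neq e$ with $g_ns_n=s_n'$. The pairs $(g_ns_n,s_n)=(s_n',s_n)$ converge to $(m_0,m_0)$, so they eventually lie in a fixed compact set; by properness some subsequence of the $g_n$ converges to an element $g_\infty$, and passing to the limit in $g_ns_n=s_n'$ gives $g_\infty m_0=m_0$, whence $g_\infty=e$ by freeness. But then, for large $n$, the points $(g_n,s_n)$ and $(e,s_n')$ both lie in a neighbourhood of $(e,m_0)$ on which $\Phi$ is injective, forcing $g_n=e$ --- a contradiction. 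Hence $\Phi\from G\times S\to M$ is an injective local diffeomorphism, i.e.\ a diffeomorphism onto an open $G$-invariant set $U=GS\subseteq M$.

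It remains to assemble the atlas. Since $U$ is open and $G$-invariant, $\pi\invers(\pi(U))=U$, so $\pi(U)$ is open in $M/G$, and the restriction $\pi|_U$ is a quotient map identifying $\pi(U)$ with $U/G\cong(G\times S)/G\cong S$; this yields a homeomorphism $\pi(U)\cong S\subseteq\Real^{\dim M-\dim G}$, i.e.\ a chart. For two such charts coming from slices $S_1,S_2$ with overlapping images, the transition sends a point of $S_1$ first into $M$, then applies $\Phi_2\invers$, then projects onto the $S_2$-factor --- a composition of smooth maps --- so the transition is smooth. Hausdorffness of $M/G$ follows because properness forces the orbit relation $\{(gm,m)\mid g\in G,\ m\in M\}$ to be closed in $M\times M$, and second countability is inherited from $M$ since $\pi$ is an open continuous surjection. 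Thus $M/G$ is a smooth manifold of dimension $\dim M-\dim G$, and in each chart above $\pi$ is identified with the projection $G\times S\to S$, manifestly a submersion; since such charts cover $M/G$, the map $\pi$ is a submersion everywhere.

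The main obstacle is the injectivity step of the second paragraph: upgrading the purely local statement that $\Phi$ is a local diffeomorphism near $G\times\{m_0\}$ to the global assertion that, for a small enough slice, $\Phi$ is injective on all of $G\times S$. This is exactly where properness and freeness are both indispensable; once the $G$-invariant tube $U\cong G\times S$ is in hand, the construction of the charts, the smoothness of the transitions, and the submersion property are all routine.
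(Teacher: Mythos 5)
The paper offers no proof of this statement: it is imported verbatim as Theorem 13.10 of the cited reference on Lie group actions, so there is nothing internal to compare your argument against. That said, your proof is correct and is precisely the standard slice/tube argument by which this quotient manifold theorem is proved in the literature (and in the cited source): build a local section $S$ transverse to the orbit, show $\Phi\colon G\times S\to M$, $(g,s)\mapsto gs$, is a $G$-equivariant local diffeomorphism near $G\times\{m_0\}$, use properness plus freeness to shrink $S$ until $\Phi$ is injective, and then read off charts for $M/G$ from the slices. The two places where care is needed are both handled correctly: the saturation argument that an open $G$-invariant subset of $G\times S$ containing $(e,m_0)$ contains a full tube $G\times S'$ (valid because $G$ acts only on the first factor, so the saturation of $V\times W$ is $G\times W$), and the compactness extraction from the properness of $(g,m)\mapsto(gm,m)$ that forces $g_n\to e$ and yields the contradiction. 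The remaining points --- Hausdorffness from closedness of the orbit relation, second countability from openness of $\pi$, smoothness of transitions, and the identification of $\pi$ with the projection $G\times S\to S$ --- are routine as you say. No gaps.
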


Suppose the action is free and proper, thus $M/G$ is a manifold. The quotient map always induce injections
\[q^*\from\Omega^n(M/G)\to \Omega^n(M)^G\]
and
\[\pr^*\from\Omega^n(M/G)\to \Omega^n(G\times M/G)^G.\]
These lead to two resolutions of $\Omega^*(M/G)$: The first one is given as the double complex
\[\begin{tikzcd}[column sep=small]
\vdots\arrow{d}&\vdots\arrow{d}&\vdots\arrow{d}&\vdots\arrow{d}&\\
  \Omega^n(M/G)\arrow{r}{q^*}\arrow{d}{d} &\Omega^n(M)^G\arrow{r}{\iota}\arrow{d}{d}&\left(\mathfrak g\dual\otimes \Omega^{n-1}(M)\right)^G\arrow{r}{\iota}\arrow{d}{d}&\left(S^2(\mathfrak g\dual)\otimes \Omega^{n-2}(M)\right)^G\arrow{r}\arrow{d}{d}& \dots\\
\Omega^{n+1}(M/G)\arrow{r}{q^*}\arrow{d}{d} &\Omega^{n+1}(M)^G\arrow{r}{\iota}\arrow{d}{d}&\left(\mathfrak g\dual\otimes \Omega^{n}(M)\right)^G\arrow{r}{\iota}\arrow{d}{d}&\left(S^2(\mathfrak g\dual)\otimes \Omega^{n-1}(M)\right)^G\arrow{r}\arrow{d}{d}& \dots  \\
\vdots&  \vdots&\vdots&\vdots&
\end{tikzcd}\]
whose total complex is the Cartan complex $\Omega^*_G(M)$, while the total complex of the second resolution is $\Omega^*_G(G\times M/G)$. The question now is: Are these two complexes equivalent on the level of cycles? This is clearly true for zero forms as the two maps
\[C^\infty(M)^G\overset{q^*}{\longleftarrow}C^\infty(M/G)\overset{\pr^*}{\longrightarrow}C^\infty(G\times M/G)^G\] 
are isomorphisms. For higher degrees let $h$ be a $G$-invariant Riemannian metric on $M$. Then the tangent bundle 
 \[TM=\mathfrak{g}^\sharp\oplus \left(\mathfrak{g}^\sharp\right)^\bot\]
 splits with respect to $h$. Moreover $dq_m\from \left(\mathfrak{g}^\sharp_m\right)^\bot\to T_{q(m)}(M/G)$ is an isomorphism for any $m\in M$. Thus we have the following lemma, what shows the equivalence in degree one 
 \begin{lemma}
     Let $G$ act properly and freely on $M$, then
 \[0\to \Omega^1(M/G)\overset{q^*}{\to}\Omega^1(M)^G\overset{\iota}{\to} \left(\mathfrak g\dual\otimes\Omega^0(M)\right)^G\to 0\]
 splits.
 \end{lemma}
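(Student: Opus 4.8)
The plan is to build, by hand, a $\C$-linear right inverse
\[s\from\bigl(\g\dual\otimes\Omega^0(M)\bigr)^G\longrightarrow\Omega^1(M)^G\]
of the contraction map $\iota$; producing such an $s$ simultaneously establishes that $\iota$ is surjective and that the sequence splits. The remaining exactness statements are the standard bookkeeping for a principal bundle, which I would dispatch first: $q^*$ is injective because $q\from M\to M/G$ is a surjective submersion; $\iota\circ q^*=0$ since a pullback form annihilates $\ker dq=\g^\sharp$; and the kernel of $\iota$ inside $\Omega^1(M)^G$ is exactly the space of $G$-invariant horizontal $1$-forms, i.e.\ the basic forms, which are precisely $q^*\Omega^1(M/G)$. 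That $q$ really is a principal $G$-bundle (so that ``basic $=$ pulled back'' applies) is guaranteed by Theorems 13.8 and 13.10 quoted above.

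For the section itself I would use the $G$-invariant Riemannian metric $h$ already fixed, together with the resulting $G$-invariant splitting $TM=\g^\sharp\oplus(\g^\sharp)^\bot$, where $\g^\sharp_m=T_m(G\cdot m)$ by Theorem 13.8. Freeness of the action makes $X\mapsto X^\sharp_M(m)$ a linear isomorphism $\g\xrightarrow{\ \sim\ }\g^\sharp_m$ at every $m$. Given $f\in\bigl(\g\dual\otimes\Omega^0(M)\bigr)^G$, that is, a $G$-equivariant smooth map $M\to\g\dual$, I would define $s(f)$ pointwise by the two requirements that $s(f)_m$ vanish on $(\g^\sharp_m)^\bot$ and that $s(f)_m\bigl(X^\sharp_M(m)\bigr)=f(X)(m)$ for all $X\in\g$. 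These conditions determine $s(f)_m\in T_m^*M$ uniquely, $s$ is visibly linear, and by construction $\iota_{X^\sharp}s(f)=f(X)$, so $\iota\circ s=\id$.

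What is left is to check that $s(f)$ is a smooth, $G$-invariant $1$-form. Smoothness I would get from the fact that $\g^\sharp$ is a smooth subbundle of $TM$ (orbits of a proper free action, Theorem 13.8), hence so is its $h$-orthogonal complement, the bundle map $M\times\g\to\g^\sharp$ is smooth, and $f$ is smooth; so $s(f)$ depends smoothly and fibrewise-linearly on these data. Invariance I would verify by feeding the three ingredients into the defining conditions: the splitting $TM=\g^\sharp\oplus(\g^\sharp)^\bot$ is preserved by every $dg$ (the orbit tangent goes to the orbit tangent, and the orthogonal complement is preserved because $h$ was chosen $G$-invariant), the fundamental vector fields transform by $dg\bigl(X^\sharp_M(m)\bigr)=(\Ad_gX)^\sharp_M(gm)$ in the sign convention of Section~\ref{sec:Cartan}, and $f$ is equivariant for the coadjoint action; chasing these through gives $g^*s(f)=s(f)$.

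The only genuinely delicate point is this last invariance computation: one must track the $G$-action on $h$, on the fundamental vector fields, and on $f$ consistently with the conventions already fixed, and the argument is non-circular precisely because $h$ was taken invariant from the start. Everything else is formal. (Structurally the same content can be phrased as: $\iota$ is the map on global sections induced by the surjection of vector bundles over $M/G$ dual to the Atiyah sequence $0\to\operatorname{ad}P\to TM/G\to T(M/G)\to 0$ of the principal bundle $P=M\to M/G$, the invariant metric is a principal connection on $P$, and a connection splits that sequence and its dual; I would mention this as a remark but carry out the proof in the explicit form above.)
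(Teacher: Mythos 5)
Your proof is correct and rests on the same idea as the paper's: the $G$-invariant metric gives the orthogonal decomposition $TM=\g^\sharp\oplus(\g^\sharp)^\bot$, and the splitting is read off from it. The only difference is that you write down the right inverse of $\iota$ (extend $f$ by zero on $(\g^\sharp)^\bot$) whereas the paper writes down the equivalent left inverse of $q^*$ (restrict an invariant form to $(\g^\sharp)^\bot$); for a short exact sequence these are the same datum, and your additional checks of exactness, smoothness and invariance are sound.
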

 \begin{proof}
     Restriction to $\left(\mathfrak{g}^\sharp\right)^\bot\subset TM$ defines a map $\Omega^1(M)^G\to\Omega^1(M/G)$ which is left inverse of $q^*$. Thus it is a split.
 \end{proof}
For the higher degrees, recall the following relation between exterior algebras.
 \begin{prop}[Prop.\ 10 of {\cite[Ch.III, \S7.7]{BourbakiI}}]\label{prop:ExtAlg}
 Let $V,W$ be vector spaces. Then there is a natural isomorphism of algebras
 \[\Lambda^*(V)\otimes\Lambda^*(W)\to \Lambda^*(V\oplus W),\]
 from the graded tensor product of the exterior algebras to the exterior algebra of the direct sum.
 \end{prop}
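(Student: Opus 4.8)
The plan is to construct the isomorphism via the universal property of the exterior algebra, which makes naturality automatic. Write $j_V\from V\to V\oplus W$ and $j_W\from W\to V\oplus W$ for the two inclusions; by functoriality of $\Lambda^*$ they induce algebra homomorphisms $\Lambda^*(V)\to\Lambda^*(V\oplus W)$ and $\Lambda^*(W)\to\Lambda^*(V\oplus W)$, still denoted $j_V$, $j_W$. I would define
\[\Phi\from \Lambda^*(V)\otimes\Lambda^*(W)\to\Lambda^*(V\oplus W),\qquad \alpha\otimes\beta\mapsto j_V(\alpha)\wedge j_W(\beta),\]
and the first step is to check that $\Phi$ is a ring homomorphism for the \emph{graded} tensor product, whose multiplication is $(\alpha\otimes\beta)(\alpha'\otimes\beta')=(-1)^{|\beta|\,|\alpha'|}\,\alpha\alpha'\otimes\beta\beta'$. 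Comparing the two sides, this reduces to the identity $j_W(\beta)\wedge j_V(\alpha')=(-1)^{|\beta|\,|\alpha'|}\,j_V(\alpha')\wedge j_W(\beta)$ in $\Lambda^*(V\oplus W)$, which holds because moving a product of $|\beta|$ vectors from $W$ past a product of $|\alpha'|$ vectors from $V$ costs exactly $|\beta|\cdot|\alpha'|$ transpositions of degree-one elements.

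To produce an inverse, I would again invoke the universal property. The linear map
\[g\from V\oplus W\to\Lambda^*(V)\otimes\Lambda^*(W),\qquad (v,w)\mapsto v\otimes 1+1\otimes w,\]
satisfies $g(v,w)^2=0$: the terms $(v\wedge v)\otimes1$ and $1\otimes(w\wedge w)$ vanish, and the cross terms $(v\otimes1)(1\otimes w)$ and $(1\otimes w)(v\otimes1)$ cancel thanks to the Koszul sign $(-1)^{1\cdot1}=-1$. Hence $g$ extends uniquely to an algebra homomorphism $\Psi\from\Lambda^*(V\oplus W)\to\Lambda^*(V)\otimes\Lambda^*(W)$. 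It then suffices to evaluate $\Phi\circ\Psi$ and $\Psi\circ\Phi$ on algebra generators: $\Phi(\Psi(v,w))=\Phi(v\otimes1+1\otimes w)=j_V(v)+j_W(w)=(v,w)$, while $\Psi(\Phi(v\otimes1))=\Psi(j_V v)=v\otimes1$ and $\Psi(\Phi(1\otimes w))=1\otimes w$; since both composites are algebra homomorphisms agreeing with the identity on generators, they are the identity.

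Naturality in $V$ and $W$ is then immediate, as $\Phi$ is assembled only from the inclusions and the functoriality of $\Lambda^*$, so the square associated to any pair of linear maps $V\to V'$, $W\to W'$ commutes. I expect the only point that needs care to be the sign bookkeeping in the first step — that $\Phi$ intertwines the graded-tensor-product multiplication with the wedge product — and this is a routine Koszul computation rather than a real obstacle. (Over a field of characteristic $2$ one should phrase the universal property with the hypothesis $f(u)^2=0$ rather than anticommutativity; the argument above already uses it in that form, so it applies verbatim.)
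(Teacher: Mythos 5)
Your proof is correct. The paper gives no proof of its own for this proposition---it simply cites Bourbaki---and your universal-property construction (the algebra map $\Phi(\alpha\otimes\beta)=j_V(\alpha)\wedge j_W(\beta)$, the inverse $\Psi$ induced by $(v,w)\mapsto v\otimes 1+1\otimes w$ via the check $g(v,w)^2=0$, and the verification on generators) is essentially the argument of the cited source, with the Koszul signs handled correctly.
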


We will now restrict to the case, where the adjoint action of $G$ on $\mathfrak g$ is trivial. This includes, in particular, the case of abelian Lie groups.

An element of $\Omega_G^*(G\times (M/G))$ is an invariant section of 
\[S^*(\mathfrak g\dual)\otimes \Lambda^*\left(T\dual\!\left(G\times\faktor{M}{G}\right)\right)\to G\times \faktor{M}{G},\]
what by the splitting of the cotangent space and Proposition \ref{prop:ExtAlg} is a $G$-invariant section of 
\[S^*(\mathfrak g\dual)\otimes \Lambda^*\left(\pr_1^*T\dual G\right)\otimes \Lambda^*\left(\pr_2^*T\dual \faktor{M}{G}\right)\to G\times \faktor{M}{G}.\]
This is the same as a section of \[S^*(\mathfrak g\dual)\otimes \Lambda^*\left(\mathfrak g\dual\right)\otimes \Lambda^*\left(\pr_2^*T\dual \faktor{M}{G}\right)\to \faktor{M}{G},\] since the action of $G$ on $S^*(\g\dual)$ is trivial.
Pulling this section back to $M$ along the quotient map yields a $G$-invariant section of
\[S^*(\mathfrak g\dual)\otimes \Lambda^*\left(\mathfrak g\dual\right)\otimes q^*\Lambda^*\left(T\dual \faktor{M}{G}\right)\to M.\]
Composition with $\id\otimes \sharp\otimes \left(dq\left|_{\left(\mathfrak{g}^\sharp\right)^\bot}\right)\right.\invers$ turns this section to an $G$-invariant section of 
\[S^*(\mathfrak g\dual)\otimes \Lambda^*T\dual M\to M\]
and thus an element of $\Omega_G^*(M)$, because $X^\sharp_{gm}=g(g\invers Xg)^\sharp_{m}=g\cdot X^\sharp_{m}$. As any of these steps may be gone in the opposite direction, we have an isomorphism between $\Omega_G^*(M)$ and $\Omega_G^*(G\times (M/G))$.

Thus for free proper actions of abelian groups, there is no difference between $M$ and $G\times (M/G)$ in equivariant differential cohomology. The most easy example for a free proper action of a non-abelian Lie group on a manifold is the left multiplication of $S^3\subset \mathbb H$ on $S^7\subset \mathbb H$. We will leave this discussion to future research.
\\[2ex]
Let $E\to M$ be a $G$-equivariant vector bundle with free and proper $G$-action on the base and the total space. Given a connection on $\nabla$ on $E$, there is the question, whether this connection is a pullback from the quotient bundle
\[\begin{tikzcd}
    E\arrow{r}{\bar q}\arrow{d}&\overline{E}\defeq\faktor{E}{G}\arrow{d}\\M\arrow{r}{q}&\overline{M}\defeq\faktor{M}{G}.
\end{tikzcd}\]
Clearly, if the connection is a pullback, then every equivariant differential characteristic class $\hat c(\nabla)$ must lie in the image of 
\[\tilde q^*\from\hat H(\overline{M},\Z)\to \fatH_G(M,\Z),\]
where $\tilde q$ is the projection of the simplicial manifolds $G^\bullet\times M\to \{e\}^\bullet\times\bar M$. In particular, the connection must be $G$-invariant and the moment map must vanish (compare also \cite[Section 2.2]{Bunke}).

Now turn the question the other way around: Assume, the there is some collection of equivariant differential characteristic classes for a connection on $E\to M$, which all lie in the image of $\tilde q^*$. Does this imply that connection descends to the quotient bundle?

We want to remark to following observations according to an answer of this question: Let $\nabla$ be a connection on the equivariant complex vector bundle $E\to M$ of rank $n$. Then total equivariant Chern form is given by 
\[R(\hat c(\nabla)=\det\left(1+\frac{1}{2\pi i}R^\nabla+\mu^\nabla\right).\]
For any $X\in\mathfrak g$, this form induces a polynomial
\begin{align*}P_X(t)&=\det\left(1+\frac{1}{2\pi i}R^\nabla+\mu^\nabla(tX)\right)\\
	&=\det\left(1+\frac{1}{2\pi i}R^\nabla+t\mu^\nabla(X)\right)    
\end{align*}
in $t$. If the total equivariant Chern form lies in the image of the quotient map, then the degree of polynomial in $t$ is zero.

In the case of $R^\nabla=0$, $t^nP_X(\frac1t)$ is exactly the characteristic polynomial of $\mu^\nabla(X)$ and hence all eigenvalues of $\mu^\nabla(X)$ are zero, if total equivariant Chern form lies in the image of the quotient map. In general, this does not imply that $\mu^\nabla(X)$ is zero, but if there is a metric on $E$, we can say more.

Let $h$ be a hermitian metric on $E$ and $\nabla$ be compatible with $h$. Then $E$ is in correspondence to a principal $U(n)$-bundle and, as the Lie algebra $\mathfrak u(n)$ consists of anti-hermitian matrices, the image of $\mu^\nabla(X)$ at any point of $M$ is anti-hermitian. The Jordan normal form of an anti-hermitian matrix is diagonal, because the conjugate of an anti-hermitian matrix by an unitary one is anti-hermitian, \[(U^*AU)^*=U^*A^*U=-U^*AU\] and hence all $1$'s' in the first the upper diagonal must vanish.
Since an invariant connection descends, if and only if the moment map vanishes (compare \cite[Problem 2.24]{Bunke}), we have proven the following Proposition.
\begin{prop}
    Let $(E,h)\to M$ be a $G$-equivariant hermitian vector bundle, such that the $G$-action is free and proper, and let $\nabla$ be a $G$-invariant hermitian connection on $E$, such that the curvature $R^\nabla$ vanishes, then $\nabla$ descends to a connection on
\[\faktor{E}{G}\to \faktor{M}{G},\] if and only if the total Chern form vanishes.
\end{prop}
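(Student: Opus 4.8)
The plan is to reduce the statement to the criterion, recalled just above, that a $G$-invariant connection descends to the quotient bundle $\faktor{E}{G}\to\faktor{M}{G}$ if and only if its moment map $\mu^\nabla$ vanishes identically (see \cite[Problem 2.24]{Bunke}). Granting this, it remains only to show that, under the flatness hypothesis $R^\nabla=0$, the moment map vanishes precisely when the total equivariant Chern form $R(\hat c(\nabla))=\det\!\left(1+\tfrac1{2\pi i}R^\nabla+\mu^\nabla\right)$ is trivial, i.e.\ equal to $1\in\Omega^*_G(M)$.

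First I would dispatch the easy direction: if $\nabla$ descends then $\mu^\nabla=0$, and since moreover $R^\nabla=0$, the formula above gives $R(\hat c(\nabla))=\det(1)=1$, so the total Chern form vanishes in positive degree. For the converse, assume $R(\hat c(\nabla))=1$. Fixing $X\in\g$ and a point $m\in M$, and using $R^\nabla_m=0$, the equivariant Chern form evaluated on $X$ reduces at $m$ to the polynomial $t\mapsto\det(1+t\,\mu^\nabla(X)_m)$ in $t$, whose reversal $t^nP_X(1/t)$ is the characteristic polynomial of $\mu^\nabla(X)_m$; triviality of the Chern form forces this polynomial to be constantly $1$, hence all eigenvalues of $\mu^\nabla(X)_m$ are zero and $\mu^\nabla(X)_m$ is nilpotent. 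Now I invoke that $\nabla$ is compatible with $h$: the associated principal bundle has structure group $U(n)$, so $\mu^\nabla(X)_m$ is anti-hermitian; an anti-hermitian matrix is unitarily diagonalizable, and a diagonalizable nilpotent matrix is $0$. Thus $\mu^\nabla(X)_m=0$ for every $X$ and every $m$, i.e.\ $\mu^\nabla=0$, so $\nabla$ descends.

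The only step that is more than bookkeeping is the passage ``$R(\hat c(\nabla))=1$ $\Rightarrow$ $\mu^\nabla(X)$ nilpotent'': one must separate the polynomial grading from the differential-form grading inside $\Omega^*_G(M)$ and observe that, because $R^\nabla=0$ annihilates every contribution of positive form degree, the only surviving components of $R(\hat c(\nabla))$ of positive total degree are the elementary symmetric functions $\sigma_k(\mu^\nabla)$, which carry polynomial degree $k$ and form degree $0$. Their simultaneous vanishing is then exactly the assertion $\det(1+t\,\mu^\nabla(X))\equiv 1$, and this is what feeds the linear-algebra conclusion. Everything else — the moment-map descent criterion and the fact that a nilpotent anti-hermitian matrix is zero — is already recorded in the discussion preceding the statement, so no new input is needed.
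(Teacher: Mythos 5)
Your argument is correct and follows essentially the same route as the paper: the paper likewise reduces to the criterion that a $G$-invariant connection descends iff $\mu^\nabla=0$, observes that with $R^\nabla=0$ the polynomial $t\mapsto\det(1+t\,\mu^\nabla(X))$ reverses to the characteristic polynomial of $\mu^\nabla(X)$ so that triviality of the Chern form forces all eigenvalues to vanish, and then uses diagonalizability of anti-hermitian matrices to conclude $\mu^\nabla(X)=0$. Your extra remark on separating the polynomial and form gradings is a harmless elaboration of the same point.
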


\subsection{Conjugation action on \texorpdfstring{$S^3$}{S3}}\label{sec:S3}
The manifold $S^3\subset\Real^4$ has a group structure. Recall that one defines on the vector space $\Real^4$ a real (non-commutative) division algebra, the quaternions, with three imaginary units $i,j,k$ squaring to $-1$ and satisfying $ij=-ji=k$. Now the space of unit quaternions is $S^3$ and has an induced multiplication. On the other hand, there is another description of the 3-sphere by the special unitary group of complex $2\times 2$-matrices
\[SU(2)=\left\{\begin{pmatrix} a&-\bar b\\b&\bar a\end{pmatrix}\middle| a,b\in \C, |a|^2+|b|^2=1\right\}.\]
The map
\[\begin{pmatrix} a&-\bar b\\b&\bar a\end{pmatrix}\mapsto a+j b\in S^3\subset\mathbb{H}\]
defines a group isomorphism between the two descriptions.

We want to investigate the conjugation action of $S^3$ on itself. Therefore note the following well-known fact (for a proof see e.g. \cite[Lemma 4.44]{Diss}):
\begin{lemma}
Half the trace or the real part of the quaternion is an invariant surjective mapping
\[\frac12\tr\from S^3\to [-1,1],\]
which induces an isomorphism of the quotient $\faktor{S^3}{SU(2)}\to[-1,1]$. The isotropy group of any point besides $1$ and $-1$ is isomorphic to $S^1$.
\end{lemma}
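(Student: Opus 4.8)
The plan is to transport everything to the quaternionic picture, where $\frac12\tr$ becomes the real part $\mathrm{Re}\from S^3\to[-1,1]$, $q=a_0+a_1i+a_2j+a_3k\mapsto a_0$. Under the given isomorphism $\begin{pmatrix}a&-\bar b\\b&\bar a\end{pmatrix}\mapsto a+jb$ one has $\tr\begin{pmatrix}a&-\bar b\\b&\bar a\end{pmatrix}=a+\bar a=2\mathrm{Re}(a)=2\mathrm{Re}(a+jb)$, so the two descriptions of the map agree. Invariance under conjugation is then immediate: $\tr(ghg\invers)=\tr(h)$; equivalently, conjugation by a unit quaternion fixes the real part because it acts on the imaginary part $\mathrm{Im}\,\mathbb H\cong\Real^3$ as a rotation, via the standard double cover $S^3=SU(2)\to SO(3)$, $q\mapsto(v\mapsto qvq\invers)$. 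Surjectivity onto $[-1,1]$ is clear, since $t+\sqrt{1-t^2}\,i\in S^3$ has real part $t$ for every $t\in[-1,1]$.

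Next I would identify the fibers of $\mathrm{Re}$ with the conjugation orbits. If $\mathrm{Re}(q)=t$ with $t\in(-1,1)$, then $q=t+\sqrt{1-t^2}\,u$ for a uniquely determined unit imaginary quaternion $u$; for $t=\pm1$ we have $q=\pm1$, which is central, so the fiber is the single point $\{\pm1\}$. Two quaternions $t+\sqrt{1-t^2}\,u_1$ and $t+\sqrt{1-t^2}\,u_2$ lie in the same orbit iff there is $g\in S^3$ with $gu_1g\invers=u_2$; since the rotation action of $S^3$ on $\mathrm{Im}\,\mathbb H$ is transitive on each sphere $\{v:|v|=r\}$, such a $g$ exists. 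Hence $\mathrm{Re}$ separates orbits, so the induced continuous map $\faktor{S^3}{SU(2)}\to[-1,1]$ is a bijection; as $\faktor{S^3}{SU(2)}$ is compact (a quotient of the compact $S^3$) and $[-1,1]$ is Hausdorff, this bijection is a homeomorphism, i.e., an isomorphism of the quotient.

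Finally, for the isotropy statement, take $q$ with $\mathrm{Re}(q)=t\in(-1,1)$ and write $q=t+\sqrt{1-t^2}\,u$ with $u$ a unit imaginary quaternion; then $gqg\invers=q$ iff $gug\invers=u$, i.e., iff $g$ lies in the centralizer of $u$ in $S^3$. Every such $u$ is conjugate to $i$ (again by transitivity on the unit sphere of $\mathrm{Im}\,\mathbb H$), and a direct computation using $ij=-ji$ shows that the unit quaternions commuting with $i$ are exactly $\cos\theta+i\sin\theta$; hence the centralizer of $u$ is the unit circle in the commutative subalgebra $\Real\oplus\Real u\cong\C$, which is isomorphic to $S^1$. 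Thus the isotropy group of every point other than $\pm1$ is a circle.

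The only step with any real content is the identification of the fibers of $\mathrm{Re}$ with the conjugation orbits, which rests on the transitivity of the $S^3$-conjugation action on the ``latitude'' $2$-spheres of $\mathrm{Im}\,\mathbb H$; this is precisely the surjectivity of $S^3\to SO(3)$ combined with the transitivity of $SO(3)$ on spheres of fixed radius, both standard facts. Everything else is a short direct computation in the quaternions.
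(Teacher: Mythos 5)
Your proof is correct and complete: the identification of half the trace with the quaternionic real part, the reduction of orbits and isotropy groups to the conjugation action on unit imaginary quaternions via the double cover $S^3\to SO(3)$, and the compact-to-Hausdorff argument for the homeomorphism of the quotient are all exactly right. The paper does not actually prove this lemma — it only cites Lemma 4.44 of the first author's dissertation — so there is nothing to compare against, but your argument is the standard one and there is no gap.
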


Another helpful picture of $S^3$ is obtained from stereographic projection with projection point $-1$. In formulas this is expressed as
\[\mathbb{H}\supset S^3\ni x=x_0+ix_1+jx_2+kx_3\mapsto \frac1{1+x_0} (x_1,x_2,x_3)\in \Real^3\cup \{\infty\},\]
where $1\in\mathbb{H}$ is mapped to the $0\in \Real^3$ and $-1$ to $\infty$. Taking subsets $S^3\subset\mathbb{H}$ of fixed real value $x_0$, then these are mapped to 2-sphere of Radius $\sqrt{\frac{1-x_0}{1+x_0}}$.
The conjugation action acts transitive on each of these 2-spheres and leaves the midpoint and $\infty$ fixed.
\begin{center}\setlength{\fboxsep}{4pt}\framebox{\hspace*{-4pt}
\begin{minipage}[c]{7.5cm}
\begin{tikzpicture} 

\def\R{2.5} \def\angEl{25} \def\angAz{-20} 

\pgfmathsetmacro\H{\R*cos(\angEl)} \tikzset{xyplane/.estyle={cm={cos(\angAz),sin(\angAz)*sin(\angEl),-sin(\angAz),
                              cos(\angAz)*sin(\angEl),(0,0)}}}
\LongitudePlane[xzplane]{\angEl}{\angAz}
\LatitudePlane[equator]{\angEl}{0}

\fill[ball color=white] (0,0) circle (\R); \draw (0,0) circle (\R);

\coordinate[mark coordinate] (O) at (0,0);
\draw (0,0) node[left] {$1$};

\DrawLatitudeCircle[\R]{0} \DrawLongitudeCircle[\R]{\angAz} \DrawLongitudeCircle[\R]{\angAz+90} 

\draw[xyplane,<->] (1.6*\R,0) node[below] {$i$} -- (0,0) -- (0,2.4*\R)
    node[right] {$j$};
\draw[->] (0,0) -- (0,1.4*\R) node[left] {$k$};
\end{tikzpicture}
\end{minipage}\hspace{-14pt}
\begin{minipage}[c]{5cm}
\hspace*{\parindent} This figure shows the stereographic projection of the 3-sphere $S^3\setminus\{-1\}$ to $\Real^3$, filled with 2-spheres. $i,j$ and $k$ are the imaginary units of the quaternions, which span the tangent space at $0\in\Real^3$. 
\par
\hspace*{\parindent} The vector field in real direction, discussed in the text, points outward like the spines of a hedgehog, perpendicular to the corresponding 2-sphere and its length is the radius of this 2-sphere.
\end{minipage}\hspace*{3pt}
}
\end{center}
Let $f\in C^\infty(S^3)^{S^3}$. It is clear that the map does only depend on the real value or, in the other picture, not on the point itself, but only on the 2-sphere, on which the point is located. To be smooth, the function must depend smoothly on the real value and the different direction must fit at $1$ and $-1$. As the function has the same value in any direction of $1$, fitting smoothly means that all odd derivatives must vanish. Thus 
\begin{align*}
C^\infty(S^3)^{S^3}&\cong \left\{f\in C^\infty([-1,1])\middle|\frac{d^kf}{dt^k}(-1)=\frac{d^kf}{dt^k}(1)=0, \text{ for all odd } k>0\right\}\\&\subset C^\infty([-1,1],\C)
\end{align*}
Now, we are going to understand invariant differential forms of the conjugation action on $S^3$.

Let $\omega\in \Omega^1(S^3)^{S^3}$. Let $v$ be a tangent vector on one of the two fixed points. Then there exists $g\in S^3$, s.t. $g\invers v g=-v$, hence an invariant one form must be zero on the fixed points. As the real part of the quaternion is invariant under conjugation, the vector field pointing in this direction, projects to a invariant tangent field on $S^3$, which vanishes only at $1$ and $-1$. In the $\Real^3$ picture, this is the radial vector field pointing outward everywhere. Let $X$ now denote the normalization of this vector field on $S^3\setminus\{1,-1\}$, and $\omega_0$ the one form dual to $X$. Let $\omega_1=\omega-(\iota(X)\omega)\omega_0$, where $\iota$ is the contraction of the form by the field. A priori this forms are only defined on $S^3\setminus\{1,-1\}$, but as $\omega$ is zero at $1$ and $-1$, we can extend $(\iota(X)\omega)\omega_0$ and $\omega_1$ by zero to obtain a smooth form on all of $S^3$. Taking any slice of $S^3$ with fixed real part in $(-1,1)$, this is isomorphic to $S^2$ and $\omega_1$ actually is a one form on each of these 2-spheres. The $S^1$-isotropy found above, acts non-trivially on tangent vectors. Hence with the same argument as above (Rotating the tangent vector to minus itself) one sees, that $\omega_1$ actually is zero. Thus $\omega=(\iota(X)\omega)\omega_0$. Let $f$ be the integral of $\iota(X)\omega\in C^\infty(S^3)^{S^3}\subset C^\infty([-1,1])$ over the interval, then $\omega=df$ and $f'(1)=f'(-1)=0$ as $\omega$ vanishes at the fixed points. 
Thus we have shown 
\[\faktor{\Omega^1(S^3)^{S^3}}{d C^\infty(S^3)^{S^3}}=0.\]
Let $\omega\in \Omega^2(S^3)^{S^3}$. Contracting with the radial field $X$ as defined in the last paragraph yields $\iota(X)\omega=f\omega_0$, for some function $f$. As $\iota^2=0$, $f=0$. Thus, restricting $\omega$ to each of the level of fixed real part in the open interval, one obtains a multiple of the volume form on $S^2$. At the fixed points one gets a $SO(3)$-invariant 2-form on $\Real^3\!$, since the adjoint action on the Lie algebra of $SU(2)$ is how one defines the double cover of $SU(2)\to SO(3)$. But there is no non-zero skew-symmetric matrix commuting with the whole $SO(3)$. Thus $\omega$ must vanish on the fixed points. Moreover, as any invariant 1-form is exact,
\begin{align*}\faktor{\Omega^2(S^3)^{S^3}\hspace{-1ex}}{d\Omega^1(S^3)^{S^3}}&=\Omega^2(S^3)^{S^3}\\&\cong \left\{f\in C^\infty([-1,1])\middle| f(-1)=f(1)=0,\frac{d^kf}{dt^k}(\pm1)=0, k \text{ odd}\right\}\!.
\end{align*}
 A volume form on the manifold induces an isomorphism $\Omega^3(S^3)\cong C^\infty(S^3)$. Since the standard volume is invariant,
 we get an isomorphism for invariant forms and functions. Let $X\in\mathfrak{s}^3\subset\mathbb{H}$. Then 
\[X^\sharp(m)=\ddt (1+tX)m(1-tX)=Xm-mX.\]
Thus for $\omega\in \Omega^3(S^3)^{S^3}$ 
\begin{multline}\iota(X^\sharp)\omega(m)=\iota(Xm-mX)\omega(m)\overset{\omega=\Ad^*\omega}{=}\iota(Xm)\omega(m)-\iota(mX)Ad_m^*\omega(m)\\=\iota(Xm)\omega(m)-\iota(m\invers mXm )\omega(m)=0. 
\end{multline}
 Moreover, $d$ vanishes on top forms, hence the Cartan differential on $\Omega^3(S^3)^{S^3}$ is zero. As $S^3$ has empty boundary
\[\int_{S^3} \from d\Omega^2(S^3)^{S^3}\to \C\] is the zero map by Stokes theorem. Thus
\[\faktor{\Omega^3(S^3)^{S^3}}{d\Omega^2(S^3)^{S^3}}\to \C, \omega\mapsto \int_{S^3}\omega.\]
is a well defined injective homomorphism. From the calculation of the cohomology below, we see, that it is surjective.

What is the classical equivariant cohomology of the conjugation action of $S^3$ with values in $R\in\{\Z,\C,\C/\Z\}$? Taking the simplicial manifold model for $ES^3\times_{S^3} S^3$ and a cellular resolution with cell structure on $S^3$ given by one zero cell, corresponding to the neutral element of $S^3$, and one three cell, then all simplicial maps are cellular and we obtain the following double complex with the cellular resolution horizontally to the right and the simplicial complex in vertical direction downwards (compare page \pageref{sec:simpcellcoho}).
\[
\begin{tikzcd}
S^3\hspace{2em}&R\arrow{r}\arrow{d}{0} &0\arrow{r}\arrow{d} &0\arrow{r}\arrow{d} &R\arrow{r}\arrow{d}{\del^{(0)}} &0\arrow{r}\arrow{d} &\dots\\
S^3\times S^3\hspace{2em}&R\arrow{r}\arrow{d}{1} &0\arrow{r}\arrow{d} &0\arrow{r}\arrow{d} &R^2\arrow{r}\arrow{d}{\del^{(1)}} &0\arrow{r}\arrow{d} &\dots\\
(S^3)^2\times S^3\hspace{2em}&R\arrow{r}\arrow{d}{0} &0\arrow{r}\arrow{d} &0\arrow{r}\arrow{d} &R^3\arrow{r}\arrow{d}{\del^{(2)}} &0\arrow{r}\arrow{d} &\dots\\
(S^3)^3\times S^3\hspace{2em}&R\arrow{r}\arrow{d}{1} &0\arrow{r}\arrow{d} &0\arrow{r}\arrow{d} &R^4\arrow{r}\arrow{d}{\del^{(3)}} &0\arrow{r}\arrow{d} &\dots\\
(S^3)^4\times S^3\hspace{2em}&R\arrow{r}\arrow{d}{0} &0\arrow{r}\arrow{d} &0\arrow{r}\arrow{d} &R^5\arrow{r}\arrow{d} &0\arrow{r}\arrow{d} &\dots\\
\vdots&\vdots&\vdots&\vdots&\vdots&\vdots&
\end{tikzcd}
\]
The $R$ in the 0-column corresponds to the zero cell and the $R^k$ in the 3-column corresponds to the $k$ 3-cells in $(S^3)^{\times k}$. The 3-cells in $S^3\times S^3$ are $S^3\times \{e\}$ and $\{e\}\times S^3$ and in $S^3\times S^3\times S^3$ are $S^3\times \{e\}\times \{e\}$,$\{e\}\times S^3\times \{e\}$ and $\{e\}\times \{e\}\times S^3$. One calculates directly for the conjugation action, that $\del^{(0)}=0$ and $\del^{(1)} (a,b)=(0,0,b)$, where the $i$-th entry corresponds to the $i$-th cell. Hence we obtain
\[H^k_{S^3}(S^3,R)=\begin{cases}
                  R & k=0,3,4\\
		  0 & k=1,2\\
                 \end{cases}
\]
and can interpret this geometrically: The third cohomology is generated by the 3-cell in $S^3$ and the fourth cohomology is generating by the `acting' 3-cell $S^3\times \{e\}\subset S^3\times S^3$.

Now the next proposition follows, in the main, by applying the hexagons \eqref{LieHexagon2} and \eqref{eq:CartanHexagon}.
\begin{prop}
For the conjugation action of the 3-sphere $S^3=SU(2)$ on itself, we have
\[\hat H^n_{S^3}(S^3,\Z)=\begin{cases}
                        \Z&n=0\\
                        C^\infty(S^3)^{S^3}/\Z&n=1\\
                        0&n=2\\
                        \Omega^2(S^3)^{S^3}\oplus \Z d\text{vol}_{S^3}\subset \Omega^3(S^3)^{S^3}&n=3\\ 
                        \C/\Z\oplus\Z&n=4\\ 
                        H^n_{S^3}(S^3,\Z)&n\geq5
                       \end{cases}
\]
and
\[\fatH^n_{S^3}(S^3,\Z)=\begin{cases}
                        \Z&n=0\\
                        C^\infty(S^3)^{S^3}/\Z&n=1\\
                        0&n=2\\
                        \Omega^2(S^3)^{S^3}\oplus \Z d\text{vol}_{S^3}\subset \Omega^3(S^3)^{S^3}&n=3\\ 
                        \C/\Z\oplus\Z\oplus\Omega^1(S^3)^{S^1}\!\!/C^\infty(S^3)^{S^1}&n=4.
                                               \end{cases}
\]
\end{prop}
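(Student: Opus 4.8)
The plan is to read off each group degree by degree from the two hexagons: \eqref{LieHexagon2} (equivalently Theorem \ref{thm:GomiHexagon1}) for $\hat H^n_{S^3}(S^3,\Z)$ and \eqref{eq:CartanHexagon} (Theorem \ref{thm:FullHexagon}) for $\fatH^n_{S^3}(S^3,\Z)$. All the inputs are already assembled in this section: the values $H^k_{S^3}(S^3,R)=R$ for $k=0,3,4$ and $=0$ for $k=1,2$ (any $R\in\{\Z,\C,\C/\Z\}$), extracted from the cellular double complex above; and the descriptions of the invariant forms on $S^3$, in particular $\Omega^1(S^3)^{S^3}=dC^\infty(S^3)^{S^3}$ (hence $d\Omega^1(S^3)^{S^3}=0$), the identification of $\Omega^2(S^3)^{S^3}$ with the indicated space of functions on $[-1,1]$, the vanishing $\iota_{X^\sharp}\omega=0$ for $\omega\in\Omega^3(S^3)^{S^3}$, and $\Omega^3(S^3)^{S^3}/d\Omega^2(S^3)^{S^3}\cong\C$ via integration. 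Once all corners of a hexagon are known in a given degree, the centre group is pinned down by a short diagram chase; the remaining extension problems are settled either because one factor is the free group $\Z$ (so the sequence splits) or, in degree $1$, by matching the extension with the one induced by $0\to\Z\to\C\to\C/\Z\to 0$.

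\textbf{The easy degrees.} In degree $0$ differential cohomology coincides with integral cohomology, giving $\Z$. For $n\ge 5$ one has $\Omega^{n-1}(S^3)^{S^3}=0$, so the source of $a$ in \eqref{LieHexagon2} vanishes and $I\colon\hat H^n_{S^3}(S^3,\Z)\to H^n_{S^3}(S^3,\Z)$ is an isomorphism. For $n=1$, $H^1_{S^3}(S^3,\Z)=0$ collapses the $I$-side, and the hexagon exhibits $\hat H^1$ as an extension of $dC^\infty(S^3)^{S^3}\cong C^\infty(S^3)^{S^3}/\C$ by $H^0_{S^3}(S^3,\C/\Z)=\C/\Z$, which the coefficient sequence identifies with $C^\infty(S^3)^{S^3}/\Z$. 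For $n=2$, all of $H^1_{S^3}(S^3,\Z)$, $H^2_{S^3}(S^3,\Z)$, $H^1_{S^3}(S^3,\C/\Z)$, $\Omega^1(S^3)^{S^3}/dC^\infty(S^3)^{S^3}$ and the relevant degree-$2$ Cartan corner vanish, forcing $\hat H^2=0$. In degrees $0,1,2$ the map $f\colon\fatH^\ast\to\hat H^\ast$ is an isomorphism (the comparison theorem of this paper), so the $\fatH$-values agree with these.

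\textbf{The hard degrees $n=3,4$.} This is the crux. First one must compute the Cartan corners: $\Omega^n_G(S^3)_\cl$ for \eqref{eq:CartanHexagon}, and for \eqref{LieHexagon2} the group $\faktor{\ker(d+\iota)_n}{(d+\iota)\bigl(\bigoplus_k(S^k(\g\dual)\otimes\Omega^{n-1-2k}(S^3))^{S^3}\bigr)}$, which by Proposition \ref{pro:f1niso} is $H^n((S^3)^\bullet\times S^3,\F^1_n\Omega^\ast)$. Both require the polynomial pieces $(\g\dual\otimes\Omega^m(S^3))^{S^3}$ and $(S^2(\g\dual)\otimes\Omega^m(S^3))^{S^3}$, handled exactly as the ordinary invariant forms above: such a section must vanish at the two fixed points $\pm1$ because $\g$ carries no $S^3$-invariants, and on each $2$-sphere slice it takes values in the one-dimensional fixed space of the isotropy circle $S^1$, after which the ``rotate a tangent vector to its negative'' argument on the slices pins the groups and their Cartan cohomology down. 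Feeding this into \eqref{LieHexagon2} in degree $3$ yields a split extension $0\to\Omega^2(S^3)^{S^3}\xrightarrow{a}\hat H^3\xrightarrow{I}H^3_{S^3}(S^3,\Z)=\Z\to 0$, whose image, realised through $R$, is $d\Omega^2(S^3)^{S^3}\oplus\Z\,d\mathrm{vol}_{S^3}\subset\Omega^3(S^3)^{S^3}$ (the two summands meeting trivially by Stokes); and in degree $4$, using $\Omega^3(S^3)^{S^3}/d\Omega^2(S^3)^{S^3}\cong\C$ together with $H^3_{S^3}(S^3,\C/\Z)=\C/\Z$ and $H^4_{S^3}(S^3,\Z)=\Z$, it gives $\hat H^4\cong\C/\Z\oplus\Z$. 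Comparing \eqref{eq:CartanHexagon} with \eqref{LieHexagon2} through the surjection $f$ then gives $\fatH^3\cong\hat H^3$ (the extra Cartan corners agreeing in this degree) and the degree-$4$ value up to the transgression piece discussed next.

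\textbf{Main obstacle.} The genuinely new work is degree $4$ for $\fatH$: identifying $\ker\bigl(f\colon\fatH^4_{S^3}(S^3,\Z)\to\hat H^4_{S^3}(S^3,\Z)\bigr)$ with $\Omega^1(S^3)^{S^1}/dC^\infty(S^3)^{S^1}$. This means analysing the extra transgression piece $(\g\dual\otimes\Omega^2(S^3))^{S^3}$ (and its interaction with $(S^2(\g\dual)\otimes C^\infty(S^3))^{S^3}$) that Gomi's model quotients away: using the one-dimensional isotropy fixed space on each slice, an $S^3$-equivariant $\g\dual$-valued form on $S^3$ is determined by its restriction to a slice, which is an $S^1$-invariant form, and this produces exactly the factor $\Omega^1(S^3)^{S^1}$ modulo exact $S^1$-invariant functions. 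Making this slice-wise identification precise — matching the behaviour at the fixed points $\pm1$ and checking that the resulting sequence $0\to\Omega^1(S^3)^{S^1}/dC^\infty(S^3)^{S^1}\to\fatH^4\to\hat H^4\to 0$ splits as claimed — is the technically heaviest step; the remainder is bookkeeping in the hexagons.
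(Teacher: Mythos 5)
Your overall strategy is the paper's: assemble the corners of the two hexagons from the cellular computation of $H^k_{S^3}(S^3,R)$ and from the analysis of invariant forms, then read off the middle groups, settling the extensions via freeness of $\Z$ (respectively injectivity/divisibility of $\C/\Z$ and of the form spaces). Your treatment of degrees $0,1,2$, of $n\geq 5$, and of $\hat H^3$ and $\hat H^4$ agrees with the paper's (which disposes of everything except $n=4$ in one sentence and splits $0\to\C/\Z\to\hat H^4\to\Z\to 0$ by injectivity of $\C/\Z$). The differences, and the gaps, are concentrated exactly where you locate the crux.

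First, the mechanism you propose for producing the $\Omega^1(S^3)^{S^1}$ factor is not the right one. The paper's key identification is $(\g\dual\otimes\Omega^k(S^3))^{S^3}\cong\Omega^k(S^3)^{S^1}$, given by \emph{evaluation at a fixed unit vector} $i\in\g$: since $S^3$ acts transitively on the unit sphere of $\g=\mathfrak{su}(2)$ with isotropy $S^1$, the value $\omega(i)$ is an $S^1$-invariant $k$-form on \emph{all of} $S^3$ and determines $\omega$. Your ``restriction to a $2$-sphere slice'' produces forms on the slices, which is a different space and does not yield the stated answer; the slice-wise/fixed-point bookkeeping you sketch would also be far more delicate than the one-line evaluation argument. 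Second, your justification ``$\fatH^3\cong\hat H^3$ (the extra Cartan corners agreeing in this degree)'' is a genuine gap: the extra degree-$3$ corner piece is $(\g\dual\otimes\Omega^0(S^3))^{S^3}$, which by the very identification above is $C^\infty(S^3)^{S^1}$ and is visibly nonzero (e.g.\ $X\mapsto(q\mapsto\mathrm{Re}(\bar Xq))$ is equivariant), so the corners of \eqref{eq:CartanHexagon} and \eqref{LieHexagon2} do \emph{not} coincide in degree $3$ and the comparison requires an actual diagram chase there; note that the paper's own proof is silent on $\fatH^3$ (it passes from ``the only open question for $\hat H$ is $n=4$'' directly to the hexagon for $\fatH^4$), so you cannot lean on it. Finally, the substantive content of the paper's degree-$4$ computation --- the analysis of closed pairs $(\omega,f)$ with $d\omega=0$ and $df=-\iota\omega$ in the corner $\bigl((\g\dual\otimes\Omega^2(S^3))^{S^3}\oplus(S^2(\g\dual)\otimes\Omega^0(S^3))^{S^3}\bigr)_\cl$, showing that $d\colon\Omega^1(S^3)^{S^1}/dC^\infty(S^3)^{S^1}\to\Omega^2_\cl(S^3)^{S^1}$ is an isomorphism and that the residual ambiguity in $f$ is $(S^2(\g\dual))^{S^3}\cong\C$ (via conjugation by $\tfrac{1+k}{\sqrt2}$ killing the off-diagonal entries) --- is only gestured at in your sketch; the paper moreover reads $\fatH^4$ off its own hexagon, where $\ker I\cong\C/\Z\oplus\Omega^1(S^3)^{S^1}/dC^\infty(S^3)^{S^1}$ appears already as a direct sum, thereby avoiding the extension problem over $\hat H^4$ that your route through $\ker f$ would still have to resolve.
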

\begin{proof}
For $\hat H^n_{S^3}(S^3,\Z)$, the only open question is the case $n=4$. There one obtains a short exact sequence $0\to \C/\Z\to \hat H^4_{S^3}(S^3,\Z)\to \Z\to 0$ from the hexagon. This sequence splits, because $\C/\Z$ is an injective abelian group.

In the case of $\fatH^4_{S^3}(S^3,\Z)$ one has the following hexagon from \eqref{eq:CartanHexagon}:
\begin{equation}\label{eq:H4S3erstes}
\begin{scaledcd}{.7}
\phantom{.} &\hspace{-2em}\C\oplus \faktor{(\mathfrak{s}^3)\dual\otimes\Omega^1(S^3)))^{S^3}}{d((\mathfrak{s}^3)\dual\otimes\Omega^0(S^3)))^{S^3})} \arrow{dr}{a} \arrow{rr}{0\oplus (d+\iota)}& & ((\mathfrak{s}^3)\dual\otimes\Omega^2(S^3)))^{S^3}\oplus S^2((\mathfrak{s}^3)\dual)\otimes\Omega^0(S^3)))^{S^3})_\cl\arrow{dr}{}\hspace{-3em}&\\
\C\arrow{ru}{}\arrow{dr}{}&&\fatH_{S^3}^4(S^3,\Z) \arrow{ru}{R}\arrow[two heads]{dr}{I}&&\C\\
&\C/\Z \arrow[hook]{ur}{} \arrow{rr}{0}&&\Z \arrow{ur}{}&
\end{scaledcd}
\end{equation}

As discussed above $\mathfrak{s}^3=\Real i+\Real j+\Real k \subset\mathbb{H}$ and $S^3$ acts transitive on the unit sphere of this space. Moreover, the subgroup of $S^3$, which leaves $i\in\mathfrak{s}^3$ invariant, is exactly $S^1\subset \C\subset\mathbb{H}$. Hence 
\begin{align*}\left(\left(\mathfrak{s}^3\right)\dual\otimes\Omega^k\left(S^3\right)\right)^{S^3}&\cong \Omega^k\left(S^3\right)^{S^1}\\
    \left(\omega\from \mathfrak{s}^3\to \Omega^k\left(S^3\right)\right)&\mapsto \omega(i)
\end{align*}
and, since the first and second de Rham cohomology of $S^3$ vanish, averaging over the $S^1$ implies that $d\from \Omega^1(S^3)^{S^1}/dC^\infty(S^3)^{S^1} \to \Omega_\cl^2(S^3)^{S^1}$ is an isomorphism.

Further, let 
\[(\omega,f)\in\left(\left((\mathfrak{s}^3)\dual\otimes\Omega^2(S^3)\right)^{S^3}\oplus \left(S^2((\mathfrak{s}^3)\dual)\otimes\Omega^0(S^3)\right)^{S^3}\right)_\cl,\]
i.e., $d\omega=0$ and $df=-\iota\omega$. Then $\omega=d\eta$ for one and only one 
\[\eta\in \faktor{(\mathfrak{s}^3)\dual\otimes\Omega^1(S^3)))^{S^3}\!\!}{d\left(\left(\left(\mathfrak{s}^3\right)\dual\otimes\Omega^0\left(S^3\right)\right)^{S^3}\right)}\] 
and $df=-\iota d\eta$. On the other $f$ is given by a symmetric $3\times 3$ matrix of smooth functions on $S^3$
\[\begin{pmatrix}
   f_{ii}&f_{ij}&f_{ik}\\f_{ji}&f_{jj}&f_{jk}\\f_{ki}&f_{kj}&f_{kk}\\
  \end{pmatrix}
\] and this matrix is determined, up to a constant matrix denoted by $A$, by the form $\eta$.
By the transitive action of $S^3$ on the Lie algebra, it is clear, that the information of the matrix is contained in $f_{ii}$ and $f_{ij}$. The conjugation by the element $\frac{1+k}{\sqrt 2}\in S^3$ translates the pair $(i,j)$ to $-(j,i)$. Hence $f_{ij}=-Ad^*_{\frac{1+k}{\sqrt 2}} f_{ij}$. Thus the off-diagonal terms of the symmetric matrix $A$ must vanish and hence $A$ must be a multiple of identity matrix. 

Thus, we have described an isomorphism 
\begin{align*}\C\oplus \faktor{\Omega^1(S^3)^{S^1}\!\!\!\!}{C^\infty(S^3)^{S^1}}&\to \left(\left((\mathfrak{s}^3)\dual\otimes\Omega^2(S^3)\right)^{S^3}\oplus \left(S^2((\mathfrak{s}^3)\dual)\otimes\Omega^0(S^3)\right)^{S^3}\right)_\cl\\
(A,\eta)&\mapsto (f,\omega).
\end{align*}
Applying this isomorphism, the hexagon \eqref{eq:H4S3erstes} changes to
\[
\begin{scaledcd}{1}
\faktor{\Omega^1(S^3)^{S^1}\!\!\!\!}{C^\infty(S^3)^{S^1}} \arrow[hook]{dr}{a} \arrow[hook]{rr}{\id}& & \faktor{\Omega^1(S^3)^{S^1}\!\!\!\!}{C^\infty(S^3)^{S^1}}\arrow{dr}\oplus \C&\\
&\fatH_{S^3}^4(S^3,\Z)\arrow{ru}{R}\arrow[two heads]{dr}{I}&&\C,\\
\C/\Z \arrow[hook]{ur}{} \arrow{rr}{0}&&\Z \arrow{ur}{}&
\end{scaledcd}
\]
where again the top line, the bottom line and the diagonals are exact. The map $a$ is injective because the inclusion in the top line factors as $R\circ a$.
\end{proof}
\subsection{Actions of finite cyclic groups on the circle}
Let $\Zp=\Z/p\Z$ denote the cyclic group with $p$ elements. There is an action of $C_p$ on any odd sphere $S^{2n-1}\subset \C^n$, where a fixed generator acts by multiplication with $e^{\frac{1}{p}2\pi i}$. This diagonal action is also unitary on the infinite-dimensional separable Hilbert space $l^2(\Natural,\C)$ and hence induces an action on the unit sphere $S^\infty$. The inclusions of $\C^n$s as first coefficients induce equivariant inclusions
\[S^1\to S^3\to \dots \to S^\infty\]
The sum of the tangent bundle and the normal bundle of $S^1\subset\C$ is a complex line bundle, $TS^1\oplus N\cong S^1\times \C$, which we equip with the connection $\nabla$, whose associated parallel transport respects the decomposition in tangent and  normal space. Hence, the holonomy once around the circle equals $2\pi$, thus is trivial. The sphere bundle (with respect to the standard metric) of $TS^1\oplus N$ is the trivial $S^1$ bundle on $S^1$ with the $S^1$-invariant connection. Now we have a pullback diagram of bundles with connection with equivariant maps
\[\begin{tikzcd}
    \left(S^1\times S^1,\nabla\right)\arrow{d}\arrow{r}&H=S^3\times_{S^1}S^1\arrow{d}\arrow{r}&S^\infty\arrow{d}\\
S^1\arrow{r}{f}&S^3\arrow{r}&S^\infty/S^1.
\end{tikzcd}\]
Moreover the first Chern class $c_1(S\infty\to S^\infty/S^1)\in H^2(S^\infty/S^1)=H^2(BS^1)$ is a generator.  Now for $\hat H^2_{C_p}(S^3,\Z)$ we have the diagram
\[\begin{tikzcd}\phantom. &\hat H^2_{C_p}(S^3,\Z)\arrow{dr}{I}&\\H^1_{C_p}(S^3,\C/\Z)\arrow[hook]{ur}\arrow{rr}{-\beta}&&H^2_{C_p}(S^3,\Z).\end{tikzcd}\] As first and second cohomology are torsion, the Bockstein is an isomorphism, given by multiplication with $p$. As the connection on $H$ is flat, $\hat c_1(H)$ actually is a class in $H^1_{C_p}(S^3,\C/\Z)$.
Let the cycle $	au=\left[0,\frac1p\right]\subset \Real/\Z\cong S^1$ be a fundamental domain of the $C_p$ action on $S^1$. Evaluation at $f(	au)$ induces the isomorphism $H^1_\Zp(S^3,\C/\Z)\to \left(\frac1p\Z\right)/\Z$ under which $c_1(H)$ is mapped to $\frac1p$. Pulling back the class along $f$ shows
\[\hat c_1(TS^1\oplus N)=\frac1p\in \C/\Z.\]
A finer analysis shows that the bundle $S^1\times S^1\to S^1$, where $C_p$ acts by multiplication with $e^{\frac{q}{p}2\pi i}$ on the fiber and $e^{\frac{1}{p}2\pi i}$ on the base space, has first equivariant differential Chern class $\frac{q}p\in \C/\Z$. One may interpret this as a measurement of holonomy along the fundamental domain.

\subsection{\texorpdfstring{$G$}{G}-Representations}
In this section, we want to investigate actions of Lie groups on $\Real^n$. This will lead to some implication to equivariant immersions. Equivariant immersions will be subject of further investigation. To generalize the well-known methods of characteristic classes applied to immersion, one has, in particular, to define multiplicative structures in equivariant differential cohomology and generalize the Whitney-Sum-Formula.
\\[2ex]
An orthogonal representation of the Lie group $G$ on $\Real^n$ (with the standard metric) is given by a map $
ho\from G\to O(n)$. This induces an action on the tangent bundle $(T\Real^n,\nabla)=(\Real^n\times\Real^n,d)$ with the trivial connection. As $d^2$ is zero, the curvature vanishes, i.e. $R^\nabla=0$. But since the trivialization of the tangent bundle is not an equivariant trivialization the moment map will not vanish in general.
\begin{align*}
\mu^\nabla(X)\varphi(m)&=d\varphi_m(X^\sharp)+\ddt(
ho(\exp(tX))\varphi(
ho(\exp(-tX))m)\\
&=d\varphi_m(X^\sharp)+d
ho(X)\varphi(m)-d\varphi_m(X^\sharp)\\    
&=d
ho(X)\varphi(m)   
\end{align*}

Hence for any equivariant differential characteristic class $\hat c$ with corresponding invariant polynomial $P\in I^*(O(n))$, one has 
\[R(\hat c(G\curvearrowright \Real^n))=P(\mu^\nabla+R^\nabla)=P(d
ho)\in S^*(\g\dual)\otimes\Omega^0_\cl(\Real^n)=S^*(\g\dual).\]

In particular, the characteristic form (and hence the class) will not vanish in general for this flat bundle.

\subsection{Towards obstruction to immersions?} 
A major application of characteristic classes in the non-equivariant case is given by obstructions to immersions -- more precisely, the characteristic classes give lower bounds to the minimal codimension of an immersion. In the world of classical characteristic classes this can be found, e.g., in \cite[Theorem 4.8]{MilnorStasheff}). Differential characteristic classes apply for a result, that conformal immersions have a stronger bound for the minimal codimension, than smooth immersions (see \cite{Millson} and \cite[\S 6]{CS} for the original work and \cite{Moore} for a partly strengthened version).

The arguments therefore go as follows:  Let $M$ be an (Riemannian) manifold and $f\from M\to \Real^n$ an (isometric) immersion. Then there is a normal bundle $NM\to M$, such that
\[TM\oplus NM=f^*T\Real^n.\]
Since the Chern classes on the right hand side vanish, the the total Chern class of $NM$ must be the inverse (with respect to the cup product) of $TM$. This implies restrictions to the values of these classes. 
Moreover, in the Riemannian case, the Levi-Civita connection on $M$ is compatible with the pullback connection $\nabla_f$ of the trivial connection on $\Real^n$ to $TM\oplus NM$. This implies similar statements for the differentially refined characteristic classes of the Riemannian connections.

More explicitly John Millson calculates the first differential Pontryagin class of some lens spaces and shows, that these do not immerse conformally into $\Real^n$ with certain codimension, where smooth immersions exist. It is, with our theory, straight forward to restate these examples for the lens space action of a finite cyclic group on the 3-sphere, which should be immersed into a trivial representation. It is subject of further research to study equivariant conformal immersions into non-trivial representations.

\section*{Acknowledgements}
The first author wants to thank the International Max Planck Research School {\it Mathematics in the Sciences} for financial support. This research was supported by ERC Starting Grant No.\ 277728.
\printbibliography
\end{document}